\numberwithin{equation}{section}
\title{K\"ahler groups and subdirect products of surface groups}
\author{Claudio Llosa Isenrich}
\address{Max Planck Institute for Mathematics, Vivatsgasse 7, 53111 Bonn, Germany}
\email{llosa@mpim-bonn.mpg.de}
\urladdr{https://guests.mpim-bonn.mpg.de/llosa/}
\keywords{K\"ahler groups, Compact K\"ahler manifolds, Surface groups, Branched covers, Homological finiteness properties}
\subjclass[2010]{32J27, 20F65 (32Q15, 20J05)}
\begin{document}

\newcommand{\AAA}{{\mathds A}}
\newcommand{\CC}{{\mathds C}}
\newcommand{\PP}{{\mathbf P}}
\newcommand{\QQ}{{\mathds Q}}
\newcommand{\RR}{{\mathds R}}
\newcommand{\NN}{{\mathds N}}
\newcommand{\ZZ}{{\mathds Z}}
\newcommand{\del}{{\partial}}
\newcommand{\one}{{\mathds {1}}}
\newcommand{\ord}{{\mathcal {O}}}
\newcommand{\ii}{{\mathds {i}}}
\newcommand{\vol}{{\mathrm {vol}}}
\newcommand{\eps}{{\epsilon}}
\def\Mod{{\rm{Mod}}}
\def\C{{\mathds C}}
\def\D{{\rm D}}
\def\S{\Sigma}
\def\F{{\mathds F}}
\def\FF{\mathcal F}
\def\aut{{\rm{Aut}}}
\def\inn{{\rm{Inn}}}
\def\out{{\rm{Out}}}
\def\isom{{\rm{Isom}}}
\def\mcg{{\rm{MCG}}}
\def\ker{{\rm{ker}}}
\def\im{{\rm{im}}}
\def\dim{{\rm{dim}}}
\def\G{\Gamma}
\def\a{\alpha}
\def\g{\gamma}
\def\L{\Lambda}
\def\Z{{\mathds{Z}}}
\def\H{{\mathds{H}}}
\def\n{{\bf N}}
\newcommand{\undll}{{\underline{l}}}
\newcommand{\mm}{{\underline{m}}}
\newcommand{\nn}{{\underline{n}}}

\theoremstyle{plain}
\newtheorem{theorem}{Theorem}[section]
\newtheorem{acknowledgement}[theorem]{Acknowledgement}
\newtheorem{claim}[theorem]{Claim}
\newtheorem{conjecture}[theorem]{Conjecture}
\newtheorem{corollary}[theorem]{Corollary}
\newtheorem{exercise}[theorem]{Exercise}
\newtheorem{lemma}[theorem]{Lemma}
\newtheorem{proposition}[theorem]{Proposition}
\newtheorem{question}{Question}
\newtheorem*{thmIntro1}{Theorem \ref{thmNewCoab}}
\newtheorem*{question2}{Question \ref{qnIntroKGsFinProps}}
\newtheorem*{question3}{Question \ref{qnIntroSuciuFinPres}}
\newtheorem*{question*}{Question}
\newtheorem{addendum}[theorem]{Addendum}

\newtheorem{keytheorem}{Theorem}
\renewcommand{\thekeytheorem}{\Alph{keytheorem}}

\theoremstyle{definition}
\newtheorem{remark}[theorem]{Remark}
\newtheorem*{acknowledgements*}{Acknowledgements}
\newtheorem{example}[theorem]{Example}
\newtheorem{definition}[theorem]{Definition}
\newtheorem*{notation*}{Notation}
\newtheorem*{convention*}{Convention}

\renewcommand{\proofname}{Proof}

\begin{abstract}    
We present a construction that produces infinite classes of K\"ahler groups that arise as fundamental groups of fibres of maps to higher dimensional tori. Following the work of Delzant and Gromov, there is great interest in knowing which subgroups of direct products of surface groups are K\"ahler.  We apply our construction to obtain new classes of irreducible, coabelian K\"ahler subgroups of direct products of $r$ surface groups. These cover the full range of possible finiteness properties of irreducible subgroups of direct products of $r$ surface groups: For any $r\geq 3$ and $2\leq k \leq r-1$, our classes of subgroups contain K\"ahler groups that have a classifying space with finite $k$-skeleton while not having a classifying space with finitely many $(k+1)$-cells. 

We also address the converse question of finding constraints on K\"ahler subdirect products of surface groups and, more generally, on homomorphisms from K\"ahler groups to direct products of surface groups. We show that if a K\"ahler subdirect product of $r$ surface groups admits a classifying space with finite $k$-skeleton for $k>\frac{r}{2}$, then it is virtually the kernel of an epimorphism from a direct product of surface groups onto a free abelian group of even rank.
\end{abstract}

\maketitle

\section{Introduction}
 A \textit{K\"ahler group} is a group that can be realised as the fundamental group of a closed K\"ahler manifold. The problem of which finitely presented groups are K\"ahler was first addressed by Serre in the 1950s \cite{Ser-58, JohRee-87} and has driven a field of very active research since. While numerous strong constraints have been proved and examples of K\"ahler groups with a variety of different properties have been constructed, the question remains wide open. For a general background on K\"ahler groups see \cite{ABCKT-95}, for a more recent overview see \cite{Bur-10}. 

 While a general answer seems out of reach for the moment, it is fruitful to consider Serre's problem in the context of more specific classes of groups. For instance, it has been shown that if the fundamental group of a compact 3-manifold without boundary is K\"ahler then it is finite \cite{DimSuc-09} (see also \cite{BisMjSes-12} and \cite{Kot-12-II}) and that a K\"ahler group with non-trivial first L2-Betti number is commensurable to a \textit{surface group} (i.e. the fundamental group of a closed Riemann surface) \cite{Gro-89}. Delzant and Py showed that if a K\"ahler group acts geometrically on a locally finite CAT(0) cube complex, then it is commensurable to a direct product of finitely many surface groups and a free abelian group \cite{DelPy-16}.

More generally, a close connection between K\"ahler groups acting on CAT(0) cube complexes and subgroups of direct products of surface groups has been observed starting with the work of Delzant and Gromov on cuts in K\"ahler groups \cite{DelGro-05} (see also \cite{Py-13, DelPy-16}). This led Delzant and Gromov to pose the question of which K\"ahler groups are subgroups of direct products of surface groups? Following the work of Bridson, Howie, Miller and Short \cite{BriHowMilSho-02, BriHowMilSho-09}, one knows that this question is intimately related to the question of finding K\"ahler groups which are not of \textit{finiteness type} $\mathcal{F}_r$ for some $r$, i.e. do not admit a classifying space with finite $r$-skeleton: any subgroup of a direct product of $k$ surface groups which is $\mathcal{F}_k$ is virtually a direct product of surface groups and finitely generated free groups.
 
 The first examples of K\"ahler subgroups of direct products of surface groups which are of type  $\mathcal{F}_{r-1}$ but not $\mathcal{F}_r$ ($r\geq 3$) were constructed by Dimca, Papadima and Suciu \cite{DimPapSuc-09-II}. Their class of examples has since been extended by Biswas, Mj and Pancholi \cite{BisMjPan-14} and by the author \cite{Llo-16-II}. All of these examples arise as kernels of surjective homomorphisms of the form $\pi_1(S_{g_1})\times \cdots \times \pi_1 (S_{g_r})\rightarrow \ZZ^2$ where $r\geq 3$ and $S_{g_i}$ is a closed Riemann surface of genus $g_i\geq 2$, $1\leq i \leq r$. Recently, examples of K\"ahler groups that are of type $\mathcal{F}_{r-1}$ but not of type $\mathcal{F}_r$, and which are not commensurable to any subgroup of a direct product of surface groups have been constructed by Bridson and the author \cite{BriLlo-16}. 
 
 We want to recall two key notions when studying subgroups of direct products of surface groups: A subgroup $H\leq G_1 \times \dots \times G_r$ of a direct product of $r$ groups $G_i$ is called \textit{full} if all intersections $G_i\cap H:= \left(1\times \dots \times 1 \times G_i \times 1 \times \dots \times 1\right) \cap H$ are non-trivial, and \textit{subdirect} if $p_i(H)= G_i$ for all $1\leq i \leq r$, with $p_i: G_1 \times \dots \times G_r\to G_i$ the projection. Their significance stems from the fact that every finitely presented subgroup of a direct product of surface groups with trivial center admits a universal embedding in a direct product of finitely many free groups and surface groups with full subdirect image \cite[Theorem C]{BriHowMilSho-13}.
 
 This paper consists of three parts. In the first part (Section 2) we develop a new construction method for K\"ahler groups. The groups obtained from this method arise as fundamental groups of fibres of holomorphic maps onto higher-dimensional complex tori. In the second and third part we address Delzant and Gromov's question. In the second part (Sections 3 -- 5) we apply our construction method to provide K\"ahler subgroups of direct products of surface groups that are not commensurable with any of the previous examples. These arise as kernels of a surjective homomorphism onto $\ZZ^{2k}$ and are \textit{irreducible}, i.e. do not decompose as direct product of two nontrivial groups (even virtually). The examples constructed in this work significantly extend the range of irreducible full subdirect K\"ahler subgroups of direct products of surface groups: all previous examples of such K\"ahler subgroups of a product of $r$ surface groups are either virtually a product of surface groups and a free abelian group, or of type $\mathcal{F}_{r-1}$, but not of type $\mathcal{F}_r$. Here we produce irreducible examples of type $\mathcal{F}_k$ and not of type $\mathcal{F}_{k+1}$ for all $2\leq k \leq r-1$, hence covering the full range of possible finiteness properties \cite{BriHowMilSho-02, BriHowMilSho-09}. In the third part (Sections 6 -- 9) we give new constraints on K\"ahler subgroups of direct products of surface groups. In particular, we show that if a full subdirect product of $r$ surface group is K\"ahler of type $\mathcal{F}_k$ with $k> \frac{r}{2}$ then it is virtually the kernel of an epimorphism from the product of surface groups onto a free abelian group of even rank.

One says that a surjective holomorphic map $h: X\rightarrow Y$ between compact complex manifolds has \textit{isolated singularities} if the critical locus of $h$ intersects each fibre (preimage of a point) in a discrete subset. The key result in our construction method is Theorem \ref{thmFiltVerGen}, a special case of which is:
 
 \begin{theorem}
 Let $X$ be a compact complex manifold of dimension $n+k$ and let $Y$ be a complex torus of dimension $k$. Let $h:X\rightarrow Y$ be a surjective holomorphic map with connected smooth generic fibre $H$. Assume that there is a filtration
 \[
  \left\{0\right\} \subset Y^0\subset Y^1 \subset \cdots \subset Y^{k-1}\subset Y^k=Y
 \]
of $Y$ by complex subtori $Y^l$ of dimension $l$ such that the projections
\[
h_l=\pi_l\circ h: X\rightarrow Y/Y^{k-l}=:Y_l
\]
have isolated singularities, where $\pi_l: Y\rightarrow Y_l$ is the holomorphic quotient homomorphism.

If $n=\mathrm{dim}_{\CC}H\geq 2$, then the map $h$ induces a short exact sequence 
\[
 1 \rightarrow \pi_1 (H) \rightarrow \pi_1 (X) \rightarrow \pi_1 (Y)= \ZZ^{2k}\rightarrow 1.
\]
Furthermore, we obtain that $\pi_i(X,H)=0$ for $2\leq i \leq \mathrm{dim}_{\CC}H$.
\label{thmFiltVer}
\end{theorem}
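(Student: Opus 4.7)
My plan is to prove the theorem by induction along the filtration, stripping off one elliptic curve at each step. Fix a generic $p_k\in Y$, set $p_l:=\pi_l(p_k)\in Y/Y^{k-l}$ and $H_l:=h_l^{-1}(p_l)$, producing a nested chain $X=H_0\supset H_1\supset\cdots\supset H_k=H$ with $\dim_\CC H_l=n+k-l$. The intermediate $H_l$ for $l<k$ need not be smooth, but the isolated-singularities hypothesis on $h_l$ forces them to have at worst isolated singularities. The quotient $Y/Y^{k-l}\to Y/Y^{k-l+1}$ has fibre the elliptic curve $E_l:=Y^{k-l+1}/Y^{k-l}$, and the restriction $g_l:=h_l|_{H_{l-1}}\colon H_{l-1}\to E_l$ is a proper holomorphic surjection whose generic fibre is $H_l$.

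I would show by induction on $l$ that $\pi_i(X,H_l)=0$ for $2\leq i\leq n+k-l$ and that $1\to\pi_1 H_l\to\pi_1 X\to\pi_1(Y/Y^{k-l})\to 1$ is exact. The base $l=0$ is vacuous, and at $l=k$ this is exactly the theorem. The heart of each inductive step is a Lefschetz-type lemma for $g_l\colon H_{l-1}\to E_l$: namely $\pi_i(H_{l-1},H_l)=0$ for $2\leq i\leq n+k-l$ together with the exactness of $1\to\pi_1 H_l\to\pi_1 H_{l-1}\to\pi_1 E_l\to 1$. The input for this lemma is that the critical locus of $g_l$ is finite over $E_l$, an immediate consequence of the isolated-singularities hypothesis on $h_l$ restricted to $H_{l-1}$. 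Over the complement $E_l'$ of the finite critical value set, $g_l$ is a smooth fibre bundle, and the classical fibration long exact sequence combined with $\pi_i E_l'=0$ for $i\geq 2$ yields the relevant connectivity and a short exact sequence of fundamental groups of $g_l^{-1}(E_l')\to E_l'$. Gluing back neighbourhoods of the critical fibres corresponds, via local Milnor theory at each isolated singular point, to attaching real $m$-cells with $m=\dim_\CC H_{l-1}=n+k-l+1\geq 3$; such attachments preserve $\pi_1$ and the relevant $\pi_i$ while the vanishing thimbles kill the meridian classes, producing the claimed short exact sequence.

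To finish the inductive step I would chase the long exact sequence of the triple $(X,H_{l-1},H_l)$ together with the inductive assumption $\pi_i(X,H_{l-1})=0$ in the required range to extract the connectivity statement, and combine the $\pi_1$ Lefschetz-type exact sequence with the short exact sequence $1\to\pi_1 E_l\to\pi_1(Y/Y^{k-l})\to\pi_1(Y/Y^{k-l+1})\to 1$ of fundamental groups of tori, via a direct diagram chase, to obtain the desired extension. The main technical obstacle is the Lefschetz-type lemma, especially its $\pi_1$ assertion: since $E_l$ is not simply connected one must analyse carefully how the meridian lifts around critical values behave in the total space, and since the intermediate $H_l$ for $l<k$ are themselves varieties with isolated singularities, the Morse-theoretic handle-attachment argument must be adapted to this stratified setting.
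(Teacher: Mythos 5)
Your inductive skeleton is essentially the paper's: the paper proves the more general Theorem \ref{thmFiltVerGen} by exactly this strategy of peeling off one elliptic curve at a time along the filtration and then chasing the resulting ladder of short exact sequences, and your final diagram chase is fine. However, as written your proposal has two unresolved points, one of which is the actual mathematical heart of the argument. First, the singular intermediate fibres are a self-inflicted complication. Since each $h_l$ is a proper holomorphic surjection, its critical values lie in a proper analytic subvariety $V^l\subset Y/Y^{k-l}$; choosing $p_k$ in the dense open set $Y\setminus\bigl(\cup_{l=1}^k\pi_l^{-1}(V^l)\bigr)$ makes every $p_l$ a regular value of $h_l$, so every $H_l=h_l^{-1}(p_l)$ is a smooth compact complex manifold. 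Moreover $H_{l-1}=h_l^{-1}(E_l)$ for the translated elliptic curve $E_l=p_l+Y^{k-l+1}/Y^{k-l}$, so the fibres of $g_l=h_l|_{H_{l-1}}$ are literally fibres of $h_l$, and a regular point of $h_l$ lying on $H_{l-1}$ is a regular point of $g_l$; hence $g_l$ is a holomorphic surjection between compact complex manifolds with isolated singularities (and with connected fibres, via Stein factorisation from the connectedness of the generic fibre of $h$). With this simultaneous generic choice, the ``stratified setting'' you defer to at the end never arises; this is precisely the observation the paper makes before running the induction.

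Second, and more seriously, the key Lefschetz-type lemma for each step --- $\pi_i(H_{l-1},H_l)=0$ up to $\dim H_l$ together with exactness of $1\to\pi_1 H_l\to\pi_1 H_{l-1}\to\pi_1 E_l\to 1$ --- is left unproved: you sketch an Ehresmann-plus-Milnor-theory argument but explicitly flag its $\pi_1$ assertion over the non-simply-connected base as ``the main technical obstacle'' without resolving it. This is exactly the nontrivial input, and it is not a routine handle-attachment bookkeeping over $E_l$ itself: in particular the injectivity of $\pi_1 H_l\to\pi_1 H_{l-1}$ does not follow formally from attaching cells of dimension at least three to $g_l^{-1}(E_l')$, because one must control how the pieces over the punctured base reassemble. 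The published result covering precisely this situation is Theorem C of Dimca--Papadima--Suciu (Theorem \ref{thmC'} in the paper), whose proof passes to the pullback of $g_l$ over the universal cover $\CC\to E_l$, establishes the connectivity statement there, and then deduces the $\pi_1$ short exact sequence from covering-space theory using $\dim H_l\geq 2$. Your proof becomes complete once you either cite that theorem (applied to the corestrictions $g_l$, which inherit isolated singularities as above) or reproduce its argument; combined with the generic choice of $p_k$, your induction and chase then yield the theorem, which is exactly how the paper proceeds.
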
 

Theorem \ref{thmFiltVer} and Theorem \ref{thmFiltVerGen} are generalisations of \cite[Theorem C]{DimPapSuc-09-II} and \cite[Theorem 2.2]{BriLlo-16}. We expect that our methods can be applied to construct interesting new classes of K\"ahler groups. Indeed we provide a first application in this work, by constructing new classes of K\"ahler subgroups of direct products of surface groups.

\begin{notation*}
Throughout this article $S=S_g$ will always denote a closed Riemann surface of genus $g\geq 2$ and $\Gamma=\Gamma_{g}=\pi_1 (S_{g})$ its fundamental group.
\end{notation*}

\begin{theorem}
Let $r\geq 3$, $r-2\geq k\geq 1$ and $g_1,\dots, g_r\geq 2$. Then there are an elliptic curve $E$, closed Riemann surfaces $S_i$ of genus $g_i$ and a surjective holomorphic map
\[
 h: S_1\times \dots \times S_r \to E^{\times k}
\]
satisfying the following properties:
\begin{enumerate}
 \item the smooth generic fibre $H$ of $h$ is connected and K\"ahler (in fact projective);
 \item the inclusion $H\hookrightarrow S_1 \times \dots \times S_r$ induces an embedding $\pi_1 (H)\leq \pi_1(S_1)\times \dots \times \pi_1(S_r)$ as an irreducible full subdirect product with $\pi_1 (H)=\ker(h_{\ast})$;
 \item $\pi_1(H)$ is of type $\mathcal{F}_{r-k}$ but not of type $\mathcal{F}_{r-k+1}$;
 \item there is no $(r-k+1)$-dimensional smooth complex subvariety $\iota: X\hookrightarrow R_1\times \dots \times R_r$ of a direct product of $r$ Riemann surfaces $R_i$ with $\iota_{\ast}(\pi_1(X))\cong\pi_1 (H)$.
\end{enumerate}
\vspace{.3cm}
  In particular, there is a K\"ahler subgroup of $\G_1\times \dots \times \G_r$ which is an irreducible full subdirect product of type $\mathcal{F}_m$ but not of type $\mathcal{F}_{m+1}$, for every $r-1\geq m\geq 2$.
  \label{thmIntroA}
\end{theorem}

Here we use the notation $E^{\times k}=\underbrace{E\times \dots \times E}_{\mbox{$k$ times}}$ for the Cartesian product of $k$ copies of $E$. The coabelian subgroups of direct products of surface groups form an important subclass of the class of all subgroups of direct products of surface groups. Indeed, in the case of three factors any finitely presented full subdirect subgroup of $D=\pi_1 (S_1)\times \pi_1 (S_2)\times \pi_1 (S_3)$ is virtually \textit{coabelian}, i.e. contains the derived subgroup $\left[D_0,D_0\right]$ of some $D_0\leq D$ of finite index; with more factors any full subdirect subgroup is virtually conilpotent \cite{BriHowMilSho-13}. We will give a more detailed discussion of subgroups of direct products of surface groups in Section \ref{secNotProd}. 
 
Theorem \ref{thmIntroA} shows that there are indeed K\"ahler groups covering the full range of possible finiteness properties of irreducible full subdirect products of surface groups. We will see that a modification of the construction used to prove Theorem \ref{thmIntroA} provides a second class of examples (see Theorem \ref{thmExtendedRange}). This class will show that the in particular part of Theorem \ref{thmIntroA} can also be proved by considering only holomorphic maps to a product of two elliptic curves. The reduction in dimension of the complex torus will mean that these different examples do not satisfy Theorem \ref{thmIntroA} (4). This shows that there is no direct correlation between the finiteness properties of a K\"ahler subgroup $G\leq \pi_1(S_1)\times \dots \times \pi_1(S_r)$ and the maximal dimension of a complex submanifold $X \subset S_1\times \dots \times S_r$ such that the inclusion map has image $G$ on fundamental groups.

Conversely, we address the question of finding constraints on K\"ahler subgroups of direct products of surface groups, or, more generally, on K\"ahler groups that admit homomorphisms to direct products of surface groups.  

\begin{definition}
 For a K\"ahler group $G$ we call a subgroup $H\leq G$ \textit{holomorphically coabelian} if there is a compact K\"ahler manifold $X$ with $G=\pi_1 (X)$ and a complex torus $T$ such that $H$ is the kernel of an epimorphism $\phi=h_{\ast}: \pi_1 (X) \to \pi_1 (T)=\ZZ^{2l}$ for $h: X\to T$ a holomorphic map.
 
 We say that $H\leq G$ is \textit{virtually holomorphically coabelian} if there are finite index subgroups $H_0\leq H$ and $G_0\leq G$ such that $H_0\leq G_0$ is holomorphically coabelian. 
\end{definition}

\begin{remark}
Note that every holomorphically coabelian subgroup $H\leq G=\pi_1 (X)$ is coabelian of even rank, that is, the kernel of an epimorphism from $G$ to a free abelian group of even rank. We will make use of this observation at several points in this work without further reference.
\end{remark}

We will show that under certain assumptions the image of a K\"ahler group under a homomorphism to a direct product of surface groups is virtually holomorphically coabelian.
\begin{theorem}
\label{thmNewCoab}
 Let $G=\pi_1 (X)$ with $X$ compact K\"ahler and let $\phi: G \rightarrow \overline{G}$ be a surjective homomorphism onto a subgroup $\overline{G}\leq \G_1 \times \dots \times \G_r$. Assume that $\phi$ has finitely generated kernel and that $\overline{G}$ is full and of type $\mathcal{F}_m$ for $m\geq 2$.
 
 Then, after reordering factors, there is $s\geq 0$ such that the projection $p_{i_1,\dots,i_k}(\overline{G})\leq \G_{g_{i_1}}\times \dots \times \G_{g_{i_k}}$ is virtually holomorphically coabelian, for all $k< 2m$ and all $1\leq i_1 < \dots < i_k\leq s$. Furthermore, the center $\mathrm{Z}(\overline{G})= \overline{G} \cap \left(\G_{g_{s+1}}\times \dots \times \G_{g_r}\right)\leq p_{s+1,\dots,r}(\overline{G})\cong \ZZ^{r-s}$ is a finite index subgroup. 
\end{theorem}

Combining Theorem \ref{thmNewCoab} with a study of the first Betti number of coabelian subdirect products of groups in Section \ref{secFinPropBetti}, allows us to show that there are non-K\"ahler subgroups of direct products of surface groups with interesting properties.

\begin{corollary}
\label{corNewCoabExs}
 Let $G=\ker (\psi)$ for $\psi: \G_{g_1}\times \dots\times \G_{g_r}\rightarrow \ZZ^{2l+1}$ an epimorphism. Then $\ker (\psi)$ is not K\"ahler.
\end{corollary}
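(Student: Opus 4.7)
Suppose, toward a contradiction, that $G := \ker\psi$ is K\"ahler. Then $G$ is finitely presented, hence of type $\mathcal F_2$, and $b_1(G)$ is even. My plan is to apply Theorem~\ref{thmNewCoab} to $G=\overline G$ with $\phi=\mathrm{id}$, and then to combine it with the first Betti number calculation of Section~\ref{secFinPropBetti} to obtain a parity contradiction with the odd coabelian rank $2l+1$.

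First I would reduce to a full subdirect product of surface groups. Set $\Lambda_i := p_i(G)\leq\G_{g_i}$. Because $(1,\dots,[a,b],\dots,1)\in\ker\psi$ for every $[a,b]\in[\G_{g_i},\G_{g_i}]$, the subgroup $\Lambda_i$ contains $[\G_{g_i},\G_{g_i}]$ and is therefore the preimage, under the abelianization $\G_{g_i}\to\ZZ^{2g_i}$, of some $M_i\leq\ZZ^{2g_i}$. If $M_i$ had infinite index in $\ZZ^{2g_i}$, then $\Lambda_i$ would be the fundamental group of an infinite abelian cover of $S_{g_i}$, hence a free group of infinite rank; this contradicts the finite generation of $\Lambda_i$ as a quotient of the finitely generated Kähler group $G$. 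Hence $\Lambda_i$ has finite index in $\G_{g_i}$ and is itself a surface group $\G_{h_i}$, and $G$ is a full coabelian subdirect product of rank $2l+1$ in $\Lambda_1\times\dots\times\Lambda_r$.

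Next I would apply Theorem~\ref{thmNewCoab} with $\phi=\mathrm{id}_G$ and $m=2$, obtaining, after reordering factors, an integer $s\geq 0$ for which $Z(G) = G\cap (\Lambda_{s+1}\times\dots\times\Lambda_r)$ has finite index in $p_{s+1,\dots,r}(G)\cong \ZZ^{r-s}$. I would rule out $s<r$ as follows: if $s<r$, then fullness gives $\Lambda_{s+1}=p_{s+1}(G)$ equal to the image of the abelian group $\ZZ^{r-s}$ under a further coordinate projection, so $\Lambda_{s+1}$ would be abelian, contradicting that it is a non-abelian surface group. Hence $s=r$.

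With $s=r$ in hand, I would invoke the first-Betti-number analysis of Section~\ref{secFinPropBetti}, which in this situation (together with the even-rank conclusion of Theorem~\ref{thmNewCoab} for the small-index projections $p_{i_1,\dots,i_k}(G)$) should yield the equality
\[
 b_1(G) \;=\; \sum_{i=1}^{r} b_1(\Lambda_i) - (2l+1) \;=\; 2\sum_{i=1}^{r} h_i - (2l+1).
\]
This integer is odd, contradicting the even parity of $b_1(G)$ forced by the K\"ahler hypothesis. The hard part is precisely this last step: the inequality $b_1(G)\geq 2\sum h_i-(2l+1)$ is elementary (coming straight out of the five-term sequence of $1\to G\to \prod \Lambda_i \to \ZZ^{2l+1}\to 1$), whereas the matching upper bound simultaneously requires the vanishing of the connecting map $H_2(\ZZ^{2l+1})\to H_1(G)_{\ZZ^{2l+1}}$ and the triviality of the $\ZZ^{2l+1}$-action on $H_1(G,\QQ)$, and this is what the analysis of Section~\ref{secFinPropBetti}, feeding on the cohomological rigidity produced by Theorem~\ref{thmNewCoab}, is engineered to supply.
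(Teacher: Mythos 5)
Your reduction is fine up to the last step: the projections $\Lambda_i=p_i(G)$ contain $[\G_{g_i},\G_{g_i}]$, are finitely generated because $G$ is (assumed) K\"ahler, hence have finite index, and $G$ is a full subdirect coabelian subgroup of $\Lambda_1\times\dots\times\Lambda_r$ of rank $2l+1$; the observation that $s=r$ in Theorem \ref{thmNewCoab} is also correct (and in fact already follows from finite-index projections). The genuine gap is exactly the step you flag as ``the hard part'': nothing you cite supplies the upper bound $b_1(G)\leq \sum_i b_1(\Lambda_i)-(2l+1)$, i.e.\ the injectivity of $(\ker\psi)_{ab}\to(\Lambda_1\times\dots\times\Lambda_r)_{ab}$. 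Proposition \ref{thmBettiZ} gives this only when the rank is $1$, or when the restriction of the epimorphism to at least three individual factors is surjective onto the whole target, and a general epimorphism onto $\ZZ^{2l+1}$ need satisfy neither (the restriction of $\psi$ to a single factor can land in a proper direct summand, as happens for the product-of-kernels examples in Remark \ref{remNewCoabExs}, which are finitely presented). Proposition \ref{propVB1} needs type $\mathcal{F}_m$ with $m\geq\frac{2r}{3}$, whereas the K\"ahler hypothesis only guarantees $\mathcal{F}_2$, so it is unavailable for $r>3$, and even then it only yields evenness of $b_1$ of some finite-index subgroup. Finally, Theorem \ref{thmNewCoab} with $m=2$ constrains only projections onto fewer than four factors; it contains no statement about the $\ZZ^{2l+1}$-action on $H_1(G,\QQ)$ or about the connecting map $H_2(\ZZ^{2l+1})\to H_1(G)_{\ZZ^{2l+1}}$, so there is no ``cohomological rigidity'' for your last step to feed on. The paper itself points out that the odd-$b_1$ argument recovers Corollary \ref{corNewCoabExs} only in two special cases (Corollary \ref{corBettiZ} and the two corollaries following it).

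The paper's proof locates the parity obstruction elsewhere, in the cokernel rather than in $b_1(G)$, and this is what makes it work in full generality. Since $G$ is coabelian it is full, and (being finitely presented) its intersections with the complementary factors are finitely generated by Theorem \ref{thmBriMil}, so each $p_i|_G$ has finitely generated kernel; by Theorem \ref{lemSiuBeauCat} the inclusion $G\hookrightarrow \Lambda_1\times\dots\times\Lambda_r$ is then induced by a holomorphic map $X\to S_{h_1}\times\dots\times S_{h_r}$, and Lemma \ref{lemHodgeHol} (via Lemma \ref{corNewCoab}) forces the cokernel of the induced map on first homology, i.e.\ the coabelian rank of $G$, to be even; this is Corollary \ref{rmkNewCoab} applied with $k=r$. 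Since the coabelian rank is invariant under passing to finite-index subgroups, this contradicts rank $2l+1$ directly, with no control of $b_1(G)$ needed. To salvage your route you would have to prove exactness of the abelianized sequence without conditions (1) or (2) of Proposition \ref{thmBettiZ}, and no such statement is available in the paper; as written, your argument is incomplete at precisely that point.
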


\begin{corollary}
 For $r\geq 6$ and $g_1,\dots,g_r\geq 2$ there is a non-K\"ahler full subdirect product $G\leq \G_{g_1}\times \dots \times \G_{g_r}$ with even first Betti number.
 \label{corExEvenB1Intro}
\end{corollary}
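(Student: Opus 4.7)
The plan is to apply Corollary \ref{corNewCoabExs} with $l=1$ and verify that one can arrange the kernel to be a full subdirect product of even first Betti number. Fix $r\geq 6$ and $g_1,\dots,g_r\geq 2$, set $D=\G_{g_1}\times\cdots\times\G_{g_r}$, and aim to construct an epimorphism $\psi:D\twoheadrightarrow \ZZ^{3}$ whose kernel $G=\ker\psi$ is a finitely generated full subdirect product with $b_1(G)$ even; Corollary \ref{corNewCoabExs} then immediately yields that $G$ is not K\"ahler.

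For the construction I would fix surjections $\phi_i:\G_{g_i}\twoheadrightarrow\ZZ$ and integer vectors $v_1,\dots,v_r\in\ZZ^{3}$, and set
\[
\psi(x_1,\dots,x_r)\;=\;\sum_{i=1}^{r}\phi_i(x_i)\,v_i.
\]
Surjectivity of $\psi$ is equivalent to the vectors $v_1,\dots,v_r$ spanning $\ZZ^{3}$, and full-subdirectness of $G$ is equivalent to the requirement that $\{v_j\}_{j\neq i}$ still span $\ZZ^{3}$ for every $i$. The hypothesis $r\geq 6$ leaves plenty of room to satisfy both conditions simultaneously. Standard BNS-invariant arguments show that, for a generic such $\psi$, the kernel $G$ is finitely generated (in fact of type $\mathcal{F}_k$ for suitable $k\geq 2$), so that $b_1(G)$ is a finite non-negative integer.

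To evaluate $b_1(G)$, I would combine the Lyndon--Hochschild--Serre five-term exact sequence associated with $1\to G\to D\to\ZZ^{3}\to 1$,
\[
H_2(D)\xrightarrow{\psi_\ast}H_2(\ZZ^{3})\to H_1(G)_{\ZZ^{3}}\to H_1(D)\to H_1(\ZZ^{3})\to 0,
\]
with the explicit formula for the first Betti number of a coabelian subdirect product of surface groups developed in Section~\ref{secFinPropBetti}. That formula expresses $b_1(G)$ in terms of the known quantities $b_1(D)=2\sum g_i$, the rank $3$ of the abelian quotient, and the rank $\rho$ of the image of $\psi_\ast$ inside $H_2(\ZZ^{3})\cong \Lambda^{2}\ZZ^{3}$. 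Since $2\sum g_i$ is even, the parity of $b_1(G)$ is governed by $\rho \bmod 2$; and by K\"unneth the image of $\psi_\ast$ is (up to the contributions from the $H_2(\G_{g_i})$) the $\ZZ$-span of $\{v_i\wedge v_j:1\leq i<j\leq r\}$. By tuning the vectors $v_i$, one can realise $\rho$ of the parity required to make $b_1(G)$ even.

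The main obstacle is the Betti number formula itself: it is not automatic that $b_1(G)=\operatorname{rk}(G^{\mathrm{ab}})$ agrees with the rank of the coinvariants $H_1(G)_{\ZZ^{3}}$, and one needs the precise input supplied by Section~\ref{secFinPropBetti} to control the difference and to reduce the parity of $b_1(G)$ to the parity of $\rho$. Modulo this input, the remainder of the argument is a finite linear-algebra choice of the vectors $v_i$ and a straightforward verification that the resulting $G$ is a finitely generated full subdirect product.
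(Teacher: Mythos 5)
Your plan fails at exactly the step you flagged as the ``main obstacle'', and the failure is not a missing reference but a genuine obstruction: with $\psi(x_1,\dots,x_r)=\sum_i\phi_i(x_i)v_i$ built from rank-one maps $\phi_i\colon \G_{g_i}\twoheadrightarrow\ZZ$, the kernel can \emph{never} have even first Betti number. Indeed, set $K=\ker\phi_1\times\dots\times\ker\phi_r\le G=\ker\psi$ and $L=\ker\bigl(B\colon\ZZ^r\to\ZZ^3\bigr)\cong\ZZ^{r-3}$, where $B=(v_1\mid\dots\mid v_r)$; then $G/K\cong L$, and this extension \emph{splits}: picking $u_i\in\G_{g_i}$ with $\phi_i(u_i)=1$, the map $(n_1,\dots,n_r)\mapsto(u_1^{n_1},\dots,u_r^{n_r})$ is a homomorphic section $L\to G$, since powers of a fixed element commute. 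The section makes $H_2(G;\QQ)\to H_2(L;\QQ)$ surjective, so the five-term sequence degenerates to $0\to H_1(K;\QQ)_L\to H_1(G;\QQ)\to H_1(L;\QQ)\to 0$. The $L$-action on $H_1(\ker\phi_i;\QQ)$ factors through the $i$-th coordinate of $\ZZ^r$, which your spanning condition makes surjective onto $\ZZ$, and the five-term sequence for $1\to\ker\phi_i\to\G_{g_i}\to\ZZ\to1$ gives $\dim H_1(\ker\phi_i;\QQ)_{\ZZ}=2g_i-1$. Hence $b_1(G)=\sum_i(2g_i-1)+(r-3)=2\sum_i g_i-3$, odd for every admissible choice of the $v_i$. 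Your parity heuristic via $\rho=\mathrm{rk}\,\mathrm{im}\bigl(H_2(D)\to H_2(\ZZ^3)\bigr)$ only controls the coinvariants $H_1(G)_{\ZZ^3}$, and the discrepancy between $b_1(G)$ and $\dim H_1(G)_{\ZZ^3}$ exactly cancels whatever you tune in $\rho$; moreover, when the $v_i$ span $\ZZ^3$ and each restriction has rank one, $\rho=3$ is forced anyway. Section~\ref{secFinPropBetti} supplies no formula covering your situation: Proposition~\ref{thmBettiZ} needs either $k=1$ or at least three restrictions surjecting onto $\ZZ^k$, and in those cases it yields $b_1(\ker\psi)=b_1(D)-k$, again odd for $k=3$.

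The paper takes a different route precisely to sidestep this: it sets $G=\ker\psi_1\times\ker\psi_2$ for two epimorphisms $\psi_j$ from products of at least three surface groups onto odd-rank free abelian groups as in Corollary~\ref{corOddb1} (possible since $r\ge 6$), so each factor has odd $b_1$ by Proposition~\ref{thmBettiZ} and the product has even $b_1$ and is full subdirect (Corollary~\ref{corExEvenB1}); non-K\"ahlerness then comes not from Corollary~\ref{corNewCoabExs} applied to $G$ itself, but from Corollary~\ref{corNewCoabExs2}/Remark~\ref{remNewCoabExs}: a K\"ahler group with a finitely presented direct factor that is coabelian of odd rank would contradict Corollary~\ref{rmkNewCoab}. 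If you insist on a single epimorphism to an odd-rank $\ZZ^k$, you would have to allow restrictions $\psi|_{\G_{g_i}}$ of rank at least two (so the extension above no longer splits) and genuinely compute the resulting transgression; the paper provides no such computation, and with three or more full-rank restrictions Proposition~\ref{thmBettiZ} again forces $b_1$ odd.
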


\subsection*{Structure:}

This work is structured as follows: In Section \ref{secMainThm} we prove Theorem \ref{thmFiltVerGen}, which implies Theorem \ref{thmFiltVer}. In Sections \ref{secExamples} and \ref{secExCombined} we construct large new classes of K\"ahler subgroups of direct products of surface groups, which we use to prove Thereom \ref{thmIntroA}. In Section \ref{secNotProd} we show that these examples are irreducible and derive their precise finiteness properties. In Section \ref{secResCoabKGs} we study homomorphisms from K\"ahler groups to direct products of surface groups and prove Theorem \ref{thmNewCoab} and Corollary \ref{corNewCoabExs}. In Section \ref{secSESCoab} we study the first Betti number of coabelian subgroups of direct products of groups and prove Corollary \ref{corExEvenB1Intro}. We apply the results of Section \ref{secFinPropBetti} to obtain additional constraints on homomorphisms from K\"ahler groups to direct products of surface groups. In Section \ref{secConsGens} we consider the universal homomorphism from a K\"ahler group to a direct product of Riemann orbisurfaces and we explain why our constraints apply to it.

\begin{acknowledgements*}
I am very grateful to my advisor Martin Bridson for his generous support and the many very helpful discussions we had about the contents of this paper, and to Simon Donaldson for inspiring conversations about topics related to the contents of this paper. I would also like to thank the anonymous referees for their careful reading of the paper and the many valuable suggestions that improved the exposition, as well as some of the proofs and results.

This work was supported by a EPSRC Research Studentship, by the German National Academic Foundation, and by a public grant as part of the FMJH.

This article is a fundamentally rewritten and extended version of arXiv:1701.01163v2. Some of the new material contained in this work is based on results from the authors PhD thesis. 
\end{acknowledgements*}

\section{A new construction method}

\label{secMainThm}

Let $X$ and $Y$ be compact complex manifolds and let $f:X\rightarrow Y$ be a surjective holomorphic map. Recall that since $X$ and $Y$ are compact a sufficient condition for the map $f$ to have isolated singularities is that the set of singular points of $f$ intersects every fibre of $f$ in a finite set \cite[(2.7) \& (2.8)]{Loo-84}. Having isolated singularities yields strong restrictions on the topology of the fibres near the singularities. We will only make indirect use of these restrictions here, by applying Theorem \ref{thmC'}. For background on isolated singularities see \cite{Loo-84}.

Before we proceed we fix some notation: For a set $M$ and subsets $A,B\subset M$ we will denote by $A\setminus B$ the set theoretic difference of $A$ and $B$. If $M=T^n$ is an abelian group then we will denote by $A-B=\left\{a-b\mid a\in A, b\in B\right\}$ the group theoretic difference of $A$ and $B$ with respect to the additive group structure. We will be careful to distinguish $-$ from set theoretic $\setminus$.

In this section we generalise the following result of Dimca, Papadima and Suciu to maps onto higher-dimensional tori by proving Theorem \ref{thmFiltVer}.

\begin{theorem}[{\cite[Theorem C]{DimPapSuc-09-II}} ]
Let $X$ be a compact complex manifold and let $Y$ be a closed Riemann surface of genus at least one. Let $f: X\rightarrow Y$ be a surjective holomorphic map with isolated singularities and connected fibres. Let $\widehat{f}: \widehat{X}\rightarrow \widetilde{Y}$ be the pull-back of $f$ under the universal cover $p:\widetilde{Y} \rightarrow Y$ and let $H$ be the smooth generic fibre of $\widehat{f}$ (and therefore of $f$).

Then the following hold:
\begin{enumerate}
 \item $\pi_i(\widehat{X},H)=0$ for $ i \leq \mathrm{dim}_{\CC}H$;
 \item if $\mathrm{dim}_{\CC} X \geq 3$, then  $1\rightarrow \pi_1 (H)\rightarrow \pi_1 (X) \overset{f_{\ast}}\rightarrow \pi_1 (Y)\rightarrow 1$ is exact.
\end{enumerate}
\label{thmC'}
\end{theorem}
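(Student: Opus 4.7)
The plan is to follow the standard Lefschetz-type strategy: exhibit $\widehat{X}$ up to homotopy as being built from the smooth generic fibre $H$ by attaching cells of real dimension strictly greater than $n = \mathrm{dim}H$. Statement (1) follows immediately from cellular approximation, and statement (2) is then extracted via the long exact sequence of the pair $(\widehat{X}, H)$ together with the identification of $\widehat{X}$ as the cover of $X$ corresponding to $\ker f_\ast$.

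For the setup, since $p : \widetilde{Y} \to Y$ is a covering, the pullback $\widehat{f} : \widehat{X} \to \widetilde{Y}$ inherits from $f$ the properties of being surjective, holomorphic, proper, having connected smooth generic fibre $H$, and having isolated singularities along a discrete critical set $\Sigma \subset \widetilde{Y}$. Since $Y$ has positive genus, $\widetilde{Y}$ is a non-compact simply-connected Riemann surface, hence contractible and exhaustible by closed topological disks whose boundaries avoid $\Sigma$.

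The central step is to perform this exhaustion carefully. Fix a regular value $y_0 \in \widetilde{Y} \setminus \Sigma$ and choose nested closed disks $y_0 \in D_1 \subset D_2 \subset \cdots$ with $\bigcup_t D_t = \widetilde{Y}$, arranged so that each passage $D_t \subset D_{t+1}$ is of one of two types: either $D_{t+1} \setminus D_t$ contains no critical value, or $D_{t+1}$ is obtained from $D_t$ by attaching a small disk around a single critical value $c \in \Sigma$. In the first case, Ehresmann's fibration theorem gives that $\widehat{f}^{-1}(D_{t+1})$ deformation retracts onto $\widehat{f}^{-1}(D_t)$, so nothing happens at the level of relative homotopy. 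In the second case, at each of the finitely many critical points of $\widehat{f}$ lying in $\widehat{f}^{-1}(c)$, the hypothesis of isolated singularity places us in the setting of Milnor's local theory: the Milnor fibre of $\widehat{f}$ at such a point is $(n-1)$-connected and has the homotopy type of a bouquet of $n$-spheres, so its inclusion into the (contractible) Milnor ball is, up to homotopy, the attachment of cells of real dimension at least $n+1$. Gluing these local contributions to the Ehresmann trivialisation over $D_{t+1} \setminus \{c\}$ shows that $\widehat{f}^{-1}(D_{t+1})$ is obtained from $\widehat{f}^{-1}(D_t)$ by attaching cells of real dimension at least $n+1$. Passing to the limit produces a relative CW model for $(\widehat{X}, H)$ with cells only in dimensions $\geq n+1$, which proves (1).

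For (2), when $n \geq 2$, (1) yields $\pi_1(\widehat{X}, H) = \pi_2(\widehat{X}, H) = 0$, so the long exact sequence of the pair forces $\pi_1 H \to \pi_1 \widehat{X}$ to be an isomorphism. Since $\widetilde{Y}$ is simply connected, $\widehat{X} \to X$ is the cover of $X$ corresponding to $\ker f_\ast$, hence $\pi_1 \widehat{X} = \ker f_\ast$. Combined with the surjectivity of $f_\ast : \pi_1 X \to \pi_1 Y$, which follows from connectedness of the smooth fibres by a standard path-lifting argument around critical values, this gives the desired short exact sequence. The main obstacle is the assembly in the central paragraph: one must show rigorously that when a small disk around a critical value is swallowed, the change in homotopy type of $\widehat{f}^{-1}(D_t)$ is captured by purely local Milnor-fibre data at the critical points, and that the attached cells genuinely have real dimension $\geq n+1$ even though the singularities are only assumed to be isolated rather than Morse.
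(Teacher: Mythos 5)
The paper does not prove this statement: it is quoted verbatim from Dimca--Papadima--Suciu \cite{DimPapSuc-09-II} and used as a black box, so there is no internal proof to compare against. Your argument is correct and is essentially the standard proof from that source --- exhaust the contractible universal cover $\widetilde{Y}$ by disks, apply Ehresmann's theorem away from the (finitely many) critical values, and use the $(n-1)$-connectivity of the local Milnor fibres at the isolated critical points to exhibit $\widehat{X}$ as $H$ with cells of real dimension at least $n+1$ attached; note that the hypothesis that the critical locus meets each fibre discretely forces it to be finite (a holomorphic map is constant on each component of its critical locus), so classical Milnor theory for isolated critical points does apply as you assume.
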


Note that in fact we will prove the more general Theorem \ref{thmFiltVerGen} from which Theorem \ref{thmFiltVer} follows immediately. These results can be seen as Lefschetz type results, since they say that in low dimensions the homotopy groups of the subvariety $H\subset \widehat{X}$ of complex dimension $n\geq 2$ coincide with the homotopy groups of $\widehat{X}$. The most classical Lefschetz type theorem is the Lefschetz Hyperplane Theorem (see \cite{GorMac-88} for a detailed introduction to Lefschetz type theorems).

\subsection{Fibrelong isolated singularities}
\label{secFibrelongSingBriLlo}

To prove Theorem \ref{thmFiltVerGen} we make use of a generalisation of Theorem \ref{thmC'} which relaxes the conditions on the singularities of $h$.
 
\begin{definition}
 Let $X$, $Y$ be compact complex manifolds. We say that a surjective map $h: X\rightarrow Y$ has \textit{fibrelong isolated singularities} if it factors as
 \[
  \xymatrix{ X \ar[r]^g \ar[rd]^h & Z\ar[d]^f \\ & Y\\}
 \]
 where $Z$ is a compact complex manifold, $g$ is a holomorphic submersion with connected fibres (and thus defines a locally trivial fibration), and $f$ is holomorphic with isolated singularities. We denote by $F$ a fibre of $g$.
 \label{defAIS}
\end{definition}

For holomorphic maps with connected fibrelong isolated singularities, Bridson and the author proved

\begin{theorem}[{\cite[Theorem 2.2]{BriLlo-16}}]
\label{thm2}
Let $Y$ be a closed Riemann surface of positive genus and let $X$ be a compact K\"ahler manifold. Let $h:X\rightarrow Y$ be a surjective holomorphic map with connected generic (smooth) fibre $\overline{H}$.

If $h$ has fibrelong isolated singularities, $g$ and $f$ are as in Definition \ref{defAIS}, and $f$ has connected fibres of complex dimension $m\geq 2$, then the sequence 
\[  
1 \rightarrow \pi_1 (\overline{H})\rightarrow \pi_1 (X) \overset{h_{\ast}}\rightarrow \pi_1 (Y)\rightarrow 1
\]
is exact.
\end{theorem}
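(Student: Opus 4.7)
The plan is to reduce the statement to Theorem~\ref{thmC'} applied to $f: Z \to Y$ and then to transfer the conclusion from $Z$ up to $X$ along the regular holomorphic fibration $g: X \to Z$. Let $H' \subset Z$ be the generic smooth fibre of $f$. Since $X$ is compact and $g$ is a holomorphic submersion, Ehresmann's theorem presents $g$ as a locally trivial smooth fibre bundle with typical fibre $F$; in particular, the generic smooth fibre of $h$ is $\overline{H} = g^{-1}(H')$, which fibres over $H'$ with fibre $F$. Form the pull-backs $\widehat{Z} = Z \times_Y \widetilde{Y}$ and $\widehat{X} = X \times_Y \widetilde{Y} \cong X \times_Z \widehat{Z}$ under the universal cover $p: \widetilde{Y} \to Y$, so that $\widehat{g}: \widehat{X} \to \widehat{Z}$ is again a fibre bundle with fibre $F$ and $\widehat{g}^{-1}(H') = \overline{H}$.

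Applying Theorem~\ref{thmC'} to $f$ (using $\dim H' = m \geq 2$ and $Y$ of positive genus) yields both the exactness of
\[
1 \to \pi_1 H' \to \pi_1 Z \xrightarrow{f_*} \pi_1 Y \to 1
\]
and the vanishing $\pi_i(\widehat{Z}, H') = 0$ for $1 \leq i \leq m$. Since $g_*$ is surjective ($g$ being a fibration with connected fibre) and $f_*$ is surjective by the preceding, $h_* = f_* \circ g_*$ is surjective. Consequently $\widehat{X}$ is connected and realises the covering of $X$ corresponding to $\ker h_*$, so $\pi_1 \widehat{X} \cong \ker h_*$.

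It then remains to identify $\pi_1 \widehat{X}$ with $\pi_1 \overline{H}$. Since $\widehat{g}$ is a fibration with $\widehat{g}^{-1}(H') = \overline{H}$, a standard consequence of the homotopy lifting property yields isomorphisms $\pi_i(\widehat{X}, \overline{H}) \cong \pi_i(\widehat{Z}, H')$ for every $i \geq 1$, so $\pi_i(\widehat{X}, \overline{H}) = 0$ for $1 \leq i \leq m$. Since $m \geq 2$, the long exact sequence of the pair $(\widehat{X}, \overline{H})$ collapses to give $\pi_1 \overline{H} \xrightarrow{\sim} \pi_1 \widehat{X} \cong \ker h_*$, which together with the surjectivity of $h_*$ completes the proof. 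The delicate step is the comparison $\pi_i(\widehat{X}, \overline{H}) \cong \pi_i(\widehat{Z}, H')$. A convenient elementary alternative is a direct five-lemma comparison between the Serre long exact sequences of the fibrations $F \to \overline{H} \to H'$ and $F \to \widehat{X} \to \widehat{Z}$, using $\pi_1 H' \cong \pi_1 \widehat{Z}$ and the surjection $\pi_2 H' \twoheadrightarrow \pi_2 \widehat{Z}$ (both from $\pi_i(\widehat{Z}, H') = 0$ for $i \leq m$, where $m \geq 2$ is essential); in either version one must take some care with basepoints and with non-abelian exactness at the low-dimensional end.
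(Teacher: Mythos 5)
This statement is imported by the paper from \cite{BriLlo-16} and is not proved here, so there is no in-paper proof to compare against; judged on its own terms, your reduction is sound and is essentially the argument one expects (and which the cited reference carries out): apply Theorem \ref{thmC'} to $f:Z\to Y$, pull everything back along the universal cover of $Y$, and transfer the relative vanishing $\pi_i(\widehat{Z},H')=0$, $i\leq m$, across the pulled-back bundle $\widehat{g}:\widehat{X}\to\widehat{Z}$ via the standard isomorphism $\pi_i(\widehat{X},\widehat{g}^{-1}(H'))\cong\pi_i(\widehat{Z},H')$ for fibrations, then read off $\pi_1\overline{H}\cong\pi_1\widehat{X}\cong\ker h_\ast$ from the long exact sequence of the pair (using $m\geq 2$). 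The identifications $\widehat{X}\cong X\times_Z\widehat{Z}$, $\overline{H}=g^{-1}(H')$ (critical values of $h$ equal those of $f$ since $g$ is a submersion), and $\pi_1\widehat{X}\cong\ker h_\ast$ once $\widehat{X}$ is connected are all correct.

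The one step you should repair is the justification of surjectivity of $h_\ast$. You assert that $g_\ast$ is surjective ``$g$ being a fibration with connected fibre,'' but connectedness of the fibre $F$ of $g$ is not among the hypotheses of the theorem (only $\overline{H}$ and the fibres of $f$ are assumed connected), and if $F$ were disconnected then $g_\ast$ need not be onto. This is easily fixed without touching the rest of your argument: surjectivity of $h_\ast$ follows directly from the hypothesis that the generic fibre $\overline{H}$ of $h$ is connected — over the complement of the (finitely many) critical values in $Y$, Ehresmann's theorem makes $h$ a locally trivial fibration with connected fibre, so $\pi_1$ of the restricted total space surjects onto $\pi_1$ of the punctured surface, which surjects onto $\pi_1 Y$; this is exactly the argument the paper itself uses for the analogous step in the proof of Theorem \ref{thmFiltVerGen}. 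With that substitution (and the basepoint care you already flag for the non-abelian end of the exact sequences), the proof is complete.
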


We will also need the following proposition. 

\begin{proposition}[{\cite[Proposition 2.3]{BriLlo-16}}]
\label{prop1part2}
Under the assumptions of Theorem \ref{thm2}, if the map $\pi_2 (Z)\to \pi_1 (F)$ associated to the fibration $g:X\to Z$ is trivial, then the long exact sequence induced by the fibration $F\hookrightarrow \overline{H}\rightarrow H$ reduces to a short exact sequence
\[
1\rightarrow \pi_1 (F)\rightarrow \pi_1 (\overline{H})\rightarrow \pi_1 (H)\rightarrow 1.
\]
If, in addition, the fibre $F$ is aspherical, then $\pi_ i (\overline{H}) \cong \pi_i (H) \cong \pi_i X$ for $2\leq i \leq m - 1$.
\end{proposition}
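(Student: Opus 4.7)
The strategy is to exploit the fact that, since $\overline{H} = g^{-1}(H)$, the fibration $F \hookrightarrow \overline{H} \to H$ is a pullback of $F \hookrightarrow X \to Z$ along the inclusion $\iota : H \hookrightarrow Z$. This produces a morphism of fibre sequences that is the identity on the fibre $F$, and I will run naturality of the associated long exact sequences of homotopy groups.

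For the short exact sequence on fundamental groups, consider the relevant portion of the long exact sequence of $F \hookrightarrow \overline{H} \to H$:
\[
\pi_2 H \xrightarrow{\partial} \pi_1 F \to \pi_1 \overline{H} \to \pi_1 H \to \pi_0 F = 1,
\]
where the rightmost term vanishes because $F$, as a generic fibre of the regular holomorphic fibration $g$, is connected. It therefore suffices to show that the connecting map $\partial$ is trivial. By naturality, $\partial$ factors as $\partial = \partial' \circ \iota_\ast$, where $\partial' : \pi_2 Z \to \pi_1 F$ is the connecting map of $F \hookrightarrow X \to Z$. The hypothesis gives $\partial' = 0$, hence $\partial = 0$, yielding the short exact sequence.

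Under the additional asphericity assumption, $\pi_i F = 0$ for all $i \geq 2$. The long exact sequence of $F \hookrightarrow \overline{H} \to H$ then collapses to give $\pi_i \overline{H} \cong \pi_i H$ for $i \geq 3$; the same isomorphism for $i = 2$ follows from the vanishing of $\partial$ established above. The analogous collapse for $F \hookrightarrow X \to Z$, together with the hypothesis at $i = 2$, gives $\pi_i X \cong \pi_i Z$ for $i \geq 2$. It remains to identify $\pi_i H$ with $\pi_i Z$ in the range $2 \leq i \leq m - 1$. For this I would apply Theorem \ref{thmC'} to $f : Z \to Y$: writing $\widehat{Z} \to \widetilde{Y}$ for the pullback of $f$ along the universal cover of $Y$, the theorem yields $\pi_i(\widehat{Z}, H) = 0$ for $i \leq m$. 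Since $\widehat{Z} \to Z$ is a covering map (pullback of a covering), $\pi_i \widehat{Z} \cong \pi_i Z$ for $i \geq 2$, and the long exact sequence of the pair $(\widehat{Z}, H)$ then produces $\pi_i H \cong \pi_i Z$ in the desired range, completing the chain.

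The whole argument amounts to diagram chasing together with a single invocation of Theorem \ref{thmC'}, and I anticipate no substantial obstacle. The only point requiring genuine care is the naturality identification $\partial = \partial' \circ \iota_\ast$: it relies on recognising that the inclusions $\overline{H} \hookrightarrow X$ and $H \hookrightarrow Z$, together with the identity on $F$, assemble into a morphism of fibrations, so that the corresponding squares of long exact sequences commute.
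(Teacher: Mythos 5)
The paper itself does not prove this statement: it is imported verbatim from \cite[Proposition 2.3]{BriLlo-16}, so there is no in-paper argument to compare against. Your reconstruction is correct and is essentially the expected one: since $\overline{H}=g^{-1}(H)$, the bundle $\overline{H}\to H$ is the restriction of $g$ over $H\subset Z$, the inclusions give a morphism of fibrations that is the identity on $F$, and naturality yields $\partial=\partial'\circ\iota_\ast=0$, which gives the short exact sequence; for the second part, asphericity of $F$ collapses both long exact sequences (using $\partial=\partial'=0$ at $i=2$), and Theorem \ref{thmC'} applied to $f\colon Z\to Y$, together with the fact that $\widehat{Z}\to Z$ is a covering, gives $\pi_i H\cong\pi_i Z$ for $2\leq i\leq m-1$, completing the chain $\pi_i\overline{H}\cong\pi_i H\cong\pi_i Z\cong\pi_i X$ in that range. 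One small caveat: connectedness of $F$ does not follow merely from $g$ being a regular holomorphic fibration (an unramified finite covering is a regular fibration with disconnected fibres); it is part of the standing hypotheses in the setting where the proposition is applied (cf.\ Section \ref{sec:Restrictions}, where each $F_l$ is assumed connected and aspherical), and it is precisely what you need to conclude that $\pi_1\overline{H}\to\pi_1 H$ is surjective, so you should state it as an assumption rather than derive it from regularity of $g$.
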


Note that the hypothesis on $\pi_2 (Z)\to \pi_1(F)$  is automatically satisfied if $\pi_1(F)$ does not contain a non-trivial normal abelian subgroup. This is the case, for example, if $F$ is a direct product of hyperbolic surfaces.

\subsection{Restrictions on $h:X\rightarrow Y$ for higher-dimensional tori}
\label{sec:Restrictions}

Let $X$ be a compact complex manifold and let $Y$ be a complex torus of dimension $k$. Let $h: X\rightarrow Y$ be a surjective holomorphic map. Assume that there is a filtration
\[
 \left\{0\right\} \subset Y^0\subset Y^1 \subset \cdots \subset Y^{k-1}\subset Y^k=Y
\]
of $Y$ by complex subtori $Y^l$ of dimension $l$, $0\leq l \leq k$. We denote $Y_l:= Y/Y^{k-l}$ the quotient torus for $0\leq l \leq k$. Let $\pi_l: Y\rightarrow Y_l$ be the canonical holomorphic projection. 

\begin{remark}
 Note that if $Y$ is an abelian variety the existence of such a filtration implies that $Y$ is isogeneous to a direct product of elliptic curves by Poincar\'e's Reducibility Theorem (for instance \cite[Theorem 5.5]{BirLan-99}). This is, however, false in general for non-abelian varieties, even for extensions of elliptic curves by elliptic curves \cite[Section 3]{OorZar-96}, \cite[Section 1.6]{BirLan-99}.
\end{remark}

Assume that the maps $h_l=\pi_l\circ h: X\rightarrow Y_l$ have connected fibres and fibrelong isolated singularities. By definition, there are compact complex manifolds $Z_l$ such that the $h_l$ factor as
\[
 \xymatrix{ X \ar[r]^{g_l}\ar[rd]_{h_l} & Z_l\ar[d]^{f_l} \\ & Y_l,}
\]
with $g_l$ a holomorphic submersion with connected fibres, and $f_l$ surjective holomorphic with isolated singularities and connected fibres. Assume further that the smooth fibre $F_l$ of $g_l$ is aspherical. We denote by $\overline{H}_l$ the connected smooth generic fibre of $h_l$ and by $H_l$ the connected smooth generic fibre of $f_l$. 

Let $x \in Y$ be a generic point and let $x_l=\pi_l(x)$ be its image in $Y_l$ for $1\leq l \leq k$. Note that $\pi_k: Y \to Y_k$ is the identity map, thus identifying $Y$ with $Y_k$ and $x$ with $x_k$. Since the $\pi_l: Y\to Y_l$ are surjective, the $x_l\in Y_l$ are also generic points. Since the singular values of $h_l$ are contained in a proper subvariety of $Y_l$ for $1\leq l \leq k$, it follows that $x_l\in Y_l$ is a regular value for $1\leq l \leq k$.
 
 By definition, the smooth generic fibres $\overline{H}_l=h_l^{-1}(x_l)$ of $h_l$ form a nested sequence
 \[
  \overline{H}= \overline{H}_k\subset \overline{H}_{k-1}\subset \cdots \subset \overline{H}_0=X.
 \]

Denote by $E_l= x_l + Y^{k+1-l}/Y^{k-l}\subset Y_l$ the translate of the quotient group $Y^{k+1-l}/Y^{k-l}$ passing through $x_l$.
 
\begin{lemma}
 Assume that for every $l$ the map $h_l$ has connected fibres and fibrelong isolated singularities. Then the restriction maps $h_l|_{\overline{H}_{l-1}}: \overline{H}_{l-1}\to E_l$ are also surjective holomorphic with fibrelong isolated singularities and connected smooth generic fibre $\overline{H}_{l-1}$.
\label{lem:RestFLIS}
\end{lemma}
\begin{proof}
Denote by $\pi_{l-1,l}: Y_l \to Y_{l-1}$ the canonical projection and observe that $h_{l-1}=\pi_{l-1,l}\circ h_l$. By definition $\overline{H}_{l-1}=h_{l-1}^{-1}(x_{l-1})= h_l^{-1}(\pi_{l-1,l}^{-1}(x_{l-1}))=h_l^{-1}(E_l)$, implying that $h_l|_{\overline{H}_{l-1}}$ is surjective holomorphic with connected smooth generic fibre $\overline{H}_l$.

The factorization $h_l=f_l\circ g_l$ induces a factorization
\begin{equation}
 \xymatrix{ \overline{H}_{l-1} \ar[r]^{g_l|_{\overline{H}_{l-1}}}\ar[rd]_{h_l|_{\overline{H}_{l-1}}} & g_l(\overline{H}_{l-1})\ar[d]^{f_l|_{g_l(\overline{H}_{l-1})}} \\ & E_l.}
\label{eqnRestIsolSing}
\end{equation} 
Note that $g_l(\overline{H}_{l-1})=f_l^{-1}(E_l)=f_l^{-1}\left(\pi_{l-1,l}^{-1}(x_{l-1})\right)$. By genericity, $x_{l-1}$ is a regular value of $\pi_{l-1,l}\circ f_l : Z_l \to Y_{l-1}$, implying that $g_l(\overline{H}_{l-1})$ is a smooth complex manifold. Using that $h_l=f_l\circ g_l$ has fibrelong isolated singularities, it is now easy to check that the same holds for $h_l|_{\overline{H}_{l-1}}$ with respect to the factorization \eqref{eqnRestIsolSing}.
\end{proof}

\subsection{The Main Theorem}
We can now state and prove the Main Theorem of this section. 
\begin{theorem}
 Assume that $h:X\rightarrow Y$ has all the properties described in Paragraph \ref{sec:Restrictions}, that the induced map $\pi_2 (\overline{H}_{l-1})\rightarrow \pi_1 (F_l)$ is trivial for $1\leq l\leq k$, and that $n:= \mathrm{min}_{1\leq l \leq k} \mathrm{dim}_{\CC} H_l\geq 2$. 
 
 Then the map $h$ induces short exact sequences
 \[
  1\longrightarrow \pi_1 (\overline{H})\longrightarrow \pi_1 (\overline{H}_{l}) \stackrel{h|_{\overline{H}_l,\ast}}{\longrightarrow} \pi_1 (x_k+Y^{k-l}) \cong \ZZ^{2(k-l)} \longrightarrow 1
 \] 
and $\pi_i(\overline{H}_l)\cong \pi_i (X)$ for $2\leq i \leq n-1$ and $0\leq l \leq k$.
\label{thmFiltVerGen}
\end{theorem}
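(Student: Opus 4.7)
The plan is to argue by induction on $k=\dim Y$. The base case $k=1$ follows directly from Theorem \ref{thm2} (for the short exact sequence) and Proposition \ref{prop1part2} (for the higher homotopy isomorphisms) applied to the elliptic fibration $h=h_1:X\to Y$, noting that $\overline{H}_1=\overline{H}$ in this case.

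For the inductive step, first apply the induction hypothesis to the map $h_{k-1}:X\to T_{k-1}:=Y/Y^1$. Its target carries the inherited filtration $\{0\}\subset Y^2/Y^1\subset\cdots\subset Y^{k-1}/Y^1\subset T_{k-1}$, and under the natural identification $T_{k-1}/(Y^{k-l}/Y^1)\cong T_l$ the associated projections coincide with the original $h_l$ for $1\leq l\leq k-1$. Hence all the hypotheses — fibrelong isolated singularities, asphericity of $F_l$, triviality of $\pi_2\overline{H}_{l-1}\to\pi_1 F_l$, and $\dim H_l\geq n\geq 2$ — are inherited. The induction hypothesis then yields
$$1\to \pi_1\overline{H}_{k-1}\to \pi_1 X \xrightarrow{(h_{k-1})_*} \pi_1 T_{k-1}\cong\ZZ^{2(k-1)}\to 1$$
together with $\pi_i X\cong\pi_i\overline{H}_{k-1}$ for $2\leq i\leq n-1$.

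Next, apply Theorem \ref{thm2} and Proposition \ref{prop1part2} to the corestriction $h|_{\overline{H}_{k-1}}:\overline{H}_{k-1}\to x^{0,k}+Y^1$. As spelled out in Paragraph \ref{sec:Restrictions}, this is a surjective holomorphic map onto an elliptic curve with fibrelong isolated singularities, connected smooth generic fibre $\overline{H}_k=\overline{H}$ of dimension $\geq 2$, and $\pi_2\overline{H}_{k-1}\to\pi_1 F_k$ trivial by hypothesis. This yields
$$1\to\pi_1\overline{H}\to\pi_1\overline{H}_{k-1}\to\pi_1 Y^1\cong\ZZ^2\to 1$$
and $\pi_i\overline{H}_{k-1}\cong\pi_i\overline{H}$ for $2\leq i\leq n-1$. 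Composing the two homotopy isomorphisms gives the asserted $\pi_i\overline{H}\cong\pi_i X$ for $2\leq i\leq n-1$.

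To obtain the short exact sequence for $h_*:\pi_1 X\to\pi_1 Y$, glue the two sequences by a diagram chase. Since $h(\overline{H}_{k-1})\subset x^{0,k}+Y^1$, the inclusion $\overline{H}_{k-1}\hookrightarrow X$ followed by $h_*$ lands in $\pi_1 Y^1\subset\pi_1 Y$ and, by the second sequence, surjects onto it; combined with the surjection $(h_{k-1})_*=\pi_{k-1,*}\circ h_*$ onto $\pi_1 T_{k-1}$ from the induction and the split exact sequence $0\to\pi_1 Y^1\to\pi_1 Y\to\pi_1 T_{k-1}\to 0$, this gives surjectivity of $h_*$ onto $\ZZ^{2k}$. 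An element of $\ker h_*$ lies in $\ker (h_{k-1})_*=\pi_1\overline{H}_{k-1}$, and then has trivial image in $\pi_1 Y^1$, forcing it into $\pi_1\overline{H}$; the reverse containment is obvious. The main obstacle is the bookkeeping needed to confirm that all structural hypotheses survive both quotienting by $Y^1$ and restricting to $\overline{H}_{k-1}$, but Paragraph \ref{sec:Restrictions} effectively handles these verifications, reducing the argument to the clean diagram chase above, made particularly smooth by the splittability of $\pi_1 Y$.
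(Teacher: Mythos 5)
Your proof is correct, and its engine is the same as the paper's: iterated use of Theorem \ref{thm2} and Proposition \ref{prop1part2} applied to the corestrictions $h_l|_{\overline{H}_{l-1}}$ over the elliptic curves $x^{0,l}+Y^{k-l+1}/Y^{k-l}$, i.e.\ the chain of short exact sequences \eqref{eqnSES1}. The packaging, however, differs in a genuine way. The paper fixes the tower $\overline{H}=\overline{H}_k\subset\dots\subset\overline{H}_0=X$, records all $k$ sequences \eqref{eqnSES1}, and runs a downward induction on $l$ proving exactness of $1\to\pi_1\overline{H}\to\pi_1\overline{H}_l\to\pi_1(x^{0,l}+Y^{k-l})\to1$; there the surjectivity of $\pi_1\overline{H}_{l-1}\to\pi_1(x^{0,l-1}+Y^{k-l+1})$ is a separate \emph{geometric} step, obtained from the Ehresmann Fibration Theorem over the complement of the critical values together with the real-codimension-two argument for the discriminant. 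You instead induct on $k=\dim Y$, apply the theorem itself to $h_{k-1}:X\to Y/Y^1$ with the inherited filtration (a legitimate move: for $1\le l\le k-1$ the projections of the quotient tower coincide with the original $h_l$, so all hypotheses of Paragraph \ref{sec:Restrictions} are inherited once $x^0$ is chosen generic for the whole tower, and $\min_{1\leq l\leq k-1}\dim H_l\geq n\geq 2$), and then glue with the single elliptic-curve sequence for $h|_{\overline{H}_{k-1}}$, which is exactly \eqref{eqnSES1} for $l=k$. This lets you deduce surjectivity of $h_{\ast}$ purely algebraically from the two surjections onto $\pi_1(Y/Y^1)$ and onto $\pi_1 Y^1$ via the extension $0\to\pi_1Y^1\to\pi_1Y\to\pi_1(Y/Y^1)\to0$ (the splitting you invoke is not even needed; only injectivity of $\pi_1Y^1\hookrightarrow\pi_1Y$ matters, and that is what the kernel step uses), thereby bypassing the paper's Ehresmann/codimension-two argument. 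The kernel computation and the injectivity of $\pi_1\overline{H}\to\pi_1X$ (composite of the two injections) are the same diagram chase in both proofs; the two arguments simply generate different intermediate statements --- the paper's rows $1\to\pi_1\overline{H}\to\pi_1\overline{H}_l\to\ZZ^{2(k-l)}\to1$ versus your unravelled $1\to\pi_1\overline{H}_j\to\pi_1X\to\ZZ^{2j}\to1$. Net effect: your route is a mild streamlining that trades a geometric surjectivity argument for an algebraic one, at the cost of the (routine but real) bookkeeping that the hypotheses descend to the quotient tower.
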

Note that Theorem \ref{thmFiltVer} is the special case of Theorem \ref{thmFiltVerGen} with $Z_l=X$ and $g_l=id_X$ for $1\leq l \leq k$. 

We emphasize the following important observation, which we will apply in Section \ref{secExCombined}.
\begin{addendum}
\label{addFiltVerGenWeak}
In fact we will see that the proof and conclusions of Theorem \ref{thmFiltVerGen} remain correct when we replace the assumptions in Paragraph \ref{sec:Restrictions} by the weaker assumption that there is a point $x_k\in Y$ such that
\begin{itemize}
\item $x_l=\pi_l(x)$ is a regular value of $h_l$ for $1\leq l \leq k$;
\item the corestrictions of the maps $h_l$ to $E_l=x_l+ Y^{k-l+1}/Y^{k-l}$ have fibrelong isolated singularities; 
\item and, in the corresponding factorization $h_l|_{\overline{H}_{l-1}}=f_l\circ g_l$, the maps $f_l$ have connected smooth generic fibres $H_l$ and the maps $g_l$ have aspherical fibres $F_l$.
\end{itemize}
 The generic choice of $x$ in Paragraph \ref{sec:Restrictions} and Lemma \ref{lem:RestFLIS} show that all of these conditions are satisfied under the assumptions of Theorem \ref{thmFiltVerGen}.
\end{addendum}

\begin{proof}[Proof of Theorem \ref{thmFiltVerGen}]
 The proof is by induction on $l$ starting with $l=k$ and decreasing to $l=0$, The induction step will be based on applying Theorem \ref{thm2} and Proposition \ref{prop1part2}. 
 
Since $H_l$ is connected and $\mathrm{dim}_{\CC} H_l\geq n\geq 2$, Theorem \ref{thm2}, Proposition \ref{prop1part2} and Lemma \ref{lem:RestFLIS} imply that the restrictions $h_l|_{\overline{H}_{l-1}}$ induce short exact sequences
\begin{equation}
1\rightarrow \pi_1 (\overline{H}_l) \rightarrow \pi_1 (\overline{H}_{l-1})\stackrel{h_{l\ast}}{\rightarrow} \pi_1 \left(E_l\right) = \ZZ^2\rightarrow 1
\label{eqnSES1}
\end{equation}
and that $ \pi_i(\overline{H}_{l-1})\cong  \pi_ i(\overline{H}_l)$ for $2\leq i \leq \mathrm{dim} H_l-1$, where $1\leq l \leq k$. In particular, we obtain that $\pi_ i(\overline{H}_{l-1}) \cong \pi_i(\overline{H}_l)$ for $2\leq i \leq n-1$.
 
 Hence, we are left with proving that the short exact sequences in \eqref{eqnSES1} induce short exact sequences
\begin{equation}
1\longrightarrow \pi_1 (\overline{H})\longrightarrow \pi_1 (\overline{H}_{l}) \stackrel{h|_{\overline{H}_l,\ast}}{\longrightarrow} \pi_1 (x_k+Y^{k-l}) \cong \ZZ^{2(k-l)} \longrightarrow 1,
\label{eqnSESInd}
\end{equation}
for $0\leq l \leq k$. The case $l=k$ is trivial. 

Now assume that the sequence \eqref{eqnSESInd} is exact for $l=l_0$ with $0< l_0 \leq k$ and consider the commutative diagram of topological spaces
\[
\xymatrix{
\overline{H} \ar@{^{(}->}[r] & \overline{H}_{l_0-1}=h^{-1}\left(x_k+ Y^{k-l_0+1}\right) \ar@{->>}[r] ^-{h} & x_k+ Y^{k-l_0+1}\\
\overline{H} \ar@{^{(}->}[r] \ar[u]^{=} & \overline{H}_{l_0}=h^{-1}\left(x_k+ Y^{k-l_0}\right) \ar@{->>}[r]^-{h} \ar@{^{(}->}[u] & x_k+ Y^{k-l_0}. \ar@{^{(}->}[u] \\
} 
\]
It induces a commutative diagram of fundamental groups 
\begin{equation}
\xymatrix{
1\ar[r] & \pi_1 (\overline{H}) \ar@{^{(}->}[r] & \pi_1 (\overline{H}_{l_0-1}) \ar@{->>}[r] ^-{h_{\ast}} & \pi_1(x_k+Y^{k-l_0+1})= \ZZ^{2(k-l_0+1)}\ar[r] &1\\
1\ar[r] & \pi_1 (\overline{H}) \ar@{^{(}->}[r] \ar[u]^{=} & \pi_1 (\overline{H}_{l_0}) \ar@{->>}[r]^-{h_{\ast}} \ar@{^{(}->}[u] & \pi_1(x_k+Y^{k-l_0})=\ZZ ^{2(k-l_0)} \ar@{^{(}->}[u]\ar[r] &1, \\
}
\label{eqndiaggps}
\end{equation}
where injectivity of the vertical map in the middle follows from \eqref{eqnSES1}. Note that the bottom row of \eqref{eqndiaggps} is exact by induction hypothesis. We will now deduce that the top row is also exact.

Exactness at $\pi_1(\overline{H})$ follows from injectivity of $\pi_1 (\overline{H}_{l_0}) \hookrightarrow \pi_1 (\overline{H}_{l_0-1})$.

For exactness at $\pi_1 (x_k+Y^{k-l_0+1})$ observe that, by the Ehresmann Fibration Theorem, the fibration $\overline{H}_{l_0-1}\rightarrow x_k+Y^{k-l_0+1}$ restricts to a locally trivial fibration $\overline{H}_{l_0-1}^*\rightarrow (x_k+Y^{k-l_0+1})^*$  with connected fibre $\overline{H}$ over the complement $(x_k+Y^{k-l_0+1})^*$ of the subvariety of critical values of $h$ in $x_k+Y^{k-l_0+1}$. Hence, the induced map $\pi_1 (\overline{H}_{l_0-1}^*)\rightarrow \pi_1 (x_k+Y^{k-l_0+1})^*$ on fundamental groups is surjective. Since the complements $\overline{H}_{l_0-1}\setminus \overline{H}_{l_0-1}^*$ and $(x_k+Y^{k-l_0+1})\setminus (x_k+Y^{k-l_0+1})^*$ are contained in complex analytic subvarieties of complex codimension at least one, the induced map $\pi_1 (\overline{H}_{l_0-1})\rightarrow \pi_1 (x_k+Y^{k-l_0+1})$ is surjective.

The induction step is thus completed by the following result:
\begin{lemma}
 The top horizontal sequence in Diagram \eqref{eqndiaggps} is exact at $\pi_1(\overline{H}_{l_0-1})$.
\end{lemma}
\begin{proof}
It is clear that $\pi_1 (\overline{H})\leq \mathrm{ker}\left(\pi_1 (\overline{H}_{l_0-1})\rightarrow \pi_1(x_k+Y^{k-l_0+1})\right)$. Hence, the only thing that we need to prove is that $\pi_1 (\overline{H})$ contains 
$\mathrm{ker}\left(\pi_1 (\overline{H}_{l_0-1})\rightarrow \pi_1(x_k+Y^{k-l_0+1})\right).$

Let $g\in \mathrm{ker}\left(\pi_1 (\overline{H}_{l_0-1})\stackrel{h_{\ast}}{\rightarrow} \pi_1(x_k+Y^{k-l_0+1}) \right)$. Then
\[
g\in \mathrm{ker}\left( \pi_1 (\overline{H}_{l_0-1}) \stackrel{h_{l_0\ast}}{\rightarrow} \pi_1 \left(E_{l_0}\right)\right),
\]
since the map $h_{l_0\ast}$ factors through $h_{\ast}: \pi_1 (\overline{H}_{l_0-1})\rightarrow \pi_1(x_k+Y^{k-l_0+1})$.

By exactness of \eqref{eqnSES1} for $l=l_0$, this implies that there is $\tilde{g}\in \pi_1(\overline{H}_{l_0})$ with $\iota_{l_0\ast}(\tilde{g})=g$, where $\iota_{l_0}: \overline{H}_{l_0}\hookrightarrow \overline{H}_{l_0-1}$ is the inclusion map. It follows from commutativity of the diagram of groups \eqref{eqndiaggps} and injectivity of the vertical maps that $\tilde{g}\in \mathrm{ker}(\pi_1 (\overline{H}_{l_0})\rightarrow \pi_1 (x_k+Y^{k-l_0}))$.

The induction assumption now implies that $\tilde{g}\in \mathrm{Im} (\pi_1 (\overline{H}) \rightarrow \pi_1 (\overline{H}_{l_0}))$. Hence, $g\in \pi_1 (\overline{H})$, completing the proof of the lemma.
\end{proof}
It follows that the Sequence \eqref{eqnSESInd} is a short exact sequence for $0\leq l \leq k$, thus completing the proof.
\end{proof}

\section{A class of higher dimensional examples}
\label{secExamples}

In this section we will construct a general class of examples of K\"ahler subgroups of direct products of surface groups arising as kernels of homomorphisms onto $\ZZ^{2k}$ for any $k\geq 1$. 

Let $E=\CC/\Lambda$ be an elliptic curve, let $r\geq 3$ and let
\[
 \alpha _i: S_i \rightarrow E
\]
be branched holomorphic coverings by closed Riemann surfaces $S_i$ of genus $\geq 2$ for $1\leq i \leq r$. 

Our groups will be the fundamental groups of the fibres of surjective holomorphic maps from the direct product $S_1\times \cdots \times S_r$ onto the $k$-fold direct product $E^{\times k}$ of $E$ with itself. For vectors $w_1,\cdots w_n\in \ZZ^k$ we will use the notation $\left(w_1\mid \cdots \mid w_n\right)$ to denote the $k\times n$-matrix with columns $w_i$ and $w_{i,j}$ to denote the $j$-th entry of $w_i$. 

For $n\geq 1$ we consider the $\ZZ$-module $E^{\times n}=\CC^n/\Lambda^n$. The $\ZZ$-module homomorphisms $\mathrm{Hom}(E^{\times n_1},E^{\times n_2})$ are precisely the $\CC$-linear maps $B: \CC^{n_1} \to \CC^{n_2}$ with $B(\Lambda^{n_1})\leq \Lambda^{n_2}$. In particular, we have $\mathrm{M}_{n_2,n_1}(\ZZ) \leq \mathrm{Hom}\left(E^{\times n_1},E^{\times n_2}\right)$ for $M_{n_2,n_1}(\ZZ)=\ZZ^{n_2\times n_1}$ the set of $n_2\times n_1$-matrices with coefficients in $\ZZ$, and $Gl(n,\ZZ)\leq \mathrm{End}(E^{\times n})$ if $n=n_1=n_2$.

We say that a set of vectors $\mathcal{C}=\left\{v_1,\cdots, v_r\right\}\subset\ZZ^k$ has property 
\begin{enumerate}
\item[(P)] if there is $1\leq i_1 < \dots <i_k \leq r$ such that $\left\{v_{i_1},\dots,v_{i_k}\right\}$ is a $\ZZ$-basis for $\ZZ^k$ and any choice of $k$ vectors in $\mathcal{C}$ is linearly independent; and
\item[(P')] if $\mathcal{C}$ has property (P), and in addition $i_j=j$ and $\left\{v_1,\dots v_k\right\}$ is the standard basis for $\ZZ^k$.
\end{enumerate}
For any set $\mathcal{C}=\left\{v_1,\cdots,v_r\right\}\subset \ZZ^k$ and $B=\left(v_1\mid \cdots \mid v_r\right)$ we can define the holomorphic map 
\[
h=B\circ \left(\alpha_1,\cdots,\alpha_r\right)=\sum \limits_{i=1}^r v_i \cdot \alpha_i :S_1\times \cdots \times S_r\rightarrow E^{\times k}.
\]

We will be interested in maps $h$ for which the set $\mathcal{C}$ has property (P). Note that, after reordering factors and adjusting by a biholomorphic automorphism of $E^{\times k}$, say $A\in \rm{GL}(k,\ZZ)$, we may in fact assume that $\mathcal{C}$ has property (P') if it has property (P). The following observation shows that such maps exist.

\begin{remark}
 Note that for $k\geq 1$, the $\ZZ$-module $\ZZ^k$ is not a finite union of $\ZZ$-submodules of rank $\leq (k-1)$. Thus, for every $r\geq k$ we can inductively construct a subset $\mathcal{C}=\left\{v_1,\cdots,v_r\right\}\subset \ZZ^k$ which satisfies property (P'). In particular, we can always complete a set $\left\{v_1,\dots,v_m\right\}\subset \ZZ^k$ with property (P') for $m\leq r$ to a set $\left\{v_1,\dots,v_r\right\}$ with property (P').
 \label{rmkLinIndep}
\end{remark}

The main result of this section is:

\begin{theorem}
 Let $1\leq k \leq r-2$. Assume that $\mathcal{C}$ satisfies property (P'), and that $\alpha_1,\cdots,\alpha_k$ are surjective on fundamental groups. 
 
 Then $h$ satisfies the hypotheses of Theorem \ref{thmFiltVerGen}, and $\overline{H}$ has the following properties:
 \begin{enumerate}
\item $\overline{H}$ is an $(r-k)$-dimensional K\"ahler (projective) submanifold of a direct product of $r$ Riemann surfaces;
  \item the inclusion $\overline{H}\hookrightarrow S_1\times \dots \times S_r$ induces an embedding $\pi_1 (\overline{H})\leq \pi_1 (S_1)\times  \cdots \times \pi_1 (S_r)$ as an irreducible full subgroup with $\pi_1(\overline{H})=\ker (h_{\ast})$;
  \item $\pi_1 (\overline{H})$ is of finiteness type $\mathcal{F}_{r-k}$, but not of type $\mathcal{F}_{r-k+1}$;
  \item there is no $(r-k+1)$-dimensional smooth projective subvariety $X\stackrel{\iota}{\hookrightarrow} R_1 \times \dots \times R_r$ of a direct product of $r$ Riemann surfaces $R_i$ with $\iota_{\ast}(\pi_1 (X)) = \pi_1 (\overline{H})$. 
 \end{enumerate}
Moreover, $\pi_j (\overline{H}) = 0$ for $2\leq j \leq r-k-1$.
 \label{thmExsGenClass}
\end{theorem}

The rest of this section will be dedicated to the proof of this result. In particular, we will assume from now that $\mathcal{C}$ and $h$ satisfy the assumptions of Theorem \ref{thmExsGenClass}. We consider the filtration $Y^l=E^{\times l}\times \left\{0\right\}$ of $E^{\times k}$ where $\pi_l: E^{\times k}\rightarrow Y_l=Y^k/Y^{k-l}= \left\{0\right\}  \times E^{\times l}$ is the projection onto the last $l$ coordinates and denote $h_l=\pi_l \circ h$, for $0\leq l \leq k$. 

Due to the assumptions on $\mathcal{C}$ the map $h_l$ factors as $h_l= f_l\circ g_l$ for $1\leq l \leq k$ with
 \[
  f_l=\pi_l \circ \left(v_{k-l+1} \mid \cdots  \mid v_r\right)\circ(\a _{k-l+1},\cdots, \a _r):  S_{k-l+1}\times \cdots \times S_r\rightarrow Y_l= \left\{0\right\} \times E^{\times l}  
 \]
 and
 \[
 g_l: S_1\times \cdots \times S_r\rightarrow  S_{k-l+1} \times \cdots \times S_r
 \]
 the canonical projection with fibre $F_l:= S_{1}\times \cdots \times S_{k-l}$ a product of closed hyperbolic surfaces (It follows from the fact that $v_1,\cdots, v_k\in \ZZ^k$ is a standard basis of $\ZZ^k$ that $h_l=f_l\circ g_l$ for $1\leq l \leq k$).

We will first check that $h$ satisfies all hypotheses of Theorem \ref{thmFiltVerGen}. The following is a natural and straight-forward generalization of \cite[Lemma 2.1]{Llo-16-II}.
\begin{lemma}
\label{lemNewConnFib}
 Let $X$ be a connected compact complex manifold, $E$ an elliptic curve and $f: X \rightarrow E$ a surjective holomorphic map. If there is a subgroup $A=\ZZ^2 \leq \pi_1 (X)$ such that $f_{\ast} (A) \leq \pi_1 (E)$ is a finite index subgroup and $f_{\ast} (\pi_1 (X)) = \pi_1 (E)$, then $f$ has connected and non multiple fibres.
\end{lemma}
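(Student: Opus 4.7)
The plan is to use the Stein factorization of $f$ and derive a contradiction using the presence of the $\ZZ^2$ subgroup $A$ together with the surjectivity of $f_\ast$ on fundamental groups. The intuition is that Stein factorization reduces the question to a finite holomorphic map $g: Z\to E$ from a closed Riemann surface $Z$, and the hypothesis on $A$ should force $Z$ to have genus one, at which point the hypothesis on $f_\ast$ forces $g$ to be a biholomorphism.

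First I would apply the Stein factorization theorem to write $f = g \circ h$, where $h: X\to Z$ is a surjective holomorphic map with connected fibres and $g: Z\to E$ is a finite holomorphic map. Since $X$ is compact and irreducible, $Z$ is a compact normal complex space of dimension $1$, hence a closed Riemann surface of some genus $g_Z$. The hypothesis $f_\ast \pi_1 X = \pi_1 E$ immediately yields that $g_\ast : \pi_1 Z \to \pi_1 E$ is surjective.

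Next I would rule out the case $g_Z \geq 2$. By the Riemann--Hurwitz formula applied to $g$, either $g$ is unramified (forcing $g_Z = 1$ since $E$ has genus one) or $g_Z \geq 2$. In the latter case, $\pi_1 Z = \Gamma_{g_Z}$ is a hyperbolic surface group, whose abelian subgroups are all cyclic. Now consider the subgroup $A \leq \pi_1 X$. Since $f_\ast A = g_\ast(h_\ast A)$ has finite index in $\pi_1 E \cong \ZZ^2$, it is itself isomorphic to $\ZZ^2$, so $h_\ast A \leq \pi_1 Z$ must be an abelian group surjecting onto $\ZZ^2$, and therefore isomorphic to $\ZZ^2$. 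This contradicts the cyclicity of abelian subgroups of $\Gamma_{g_Z}$.

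Hence $g_Z = 1$, meaning $Z$ is an elliptic curve and $g: Z \to E$ is an unramified holomorphic covering map. For such a covering, $g_\ast : \pi_1 Z \hookrightarrow \pi_1 E$ is injective; combined with the already established surjectivity of $g_\ast$, this forces $g_\ast$ to be an isomorphism, so $g$ has degree one and is a biholomorphism. Consequently $f$ coincides (up to biholomorphism of the base) with $h$, and therefore has connected fibres.

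The only mildly delicate point is verifying that $h_\ast A$ really is $\ZZ^2$ rather than a quotient. This follows because $h_\ast A$ is a quotient of $A \cong \ZZ^2$, embedded via $g_\ast$ (injective on the subgroup of finite index that is $f_\ast A$) into $\pi_1 E$; the composition $A \to h_\ast A \to f_\ast A$ has image of rank two, forcing $h_\ast A$ to have rank two as well. Beyond this bookkeeping, the argument is essentially a direct adaptation of \cite[Lemma 2.1]{Llo-16-II}, with the surface-group-hyperbolicity step replacing the one-dimensional argument used there.
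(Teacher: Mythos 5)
Your proposal is correct and follows essentially the same route as the paper: Stein factorisation, using the $\ZZ^2$ subgroup $A$ (whose image has finite index in $\pi_1 E$) to force the intermediate curve to have genus one, and then using surjectivity of $f_\ast$ together with the cover being unramified to conclude the finite map has degree one. Your explicit Riemann--Hurwitz case analysis and the rank bookkeeping for $h_\ast A$ just spell out steps the paper leaves implicit.
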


\begin{proof}
 Since $f$ is proper, by Stein factorisation and \cite[p. 104]{Cat-08}, there is a closed Riemann orbisurface $S_{g,\mm}$, such that $f$ factors as
 \[
 \xymatrix{ X \ar[r]^{h_1} \ar[rd]_f & S_{g,\mm} \ar[d] ^{h_2}\\ & E,}
 \]
 where $h_1$ is holomorphic with connected and non multiple fibres and $h_2$ is holomorphic and finite-to-one; for the definition of a closed Riemann orbisurface refer to Section \ref{secResCoabKGs}. In particular, $h_2$ induces a (possibly ramified) covering map between the underlying Riemann surface $S_g$ of $S_{g,\mm}$ and $E$. 
 
 By assumption, the restriction $f_{\ast}|_A$ is injective. Thus, $h_{1,\ast} (A)\leq \pi_1^{orb} (S_{g,\mm})$ defines a $\ZZ^2$-subgroup of $\pi_1^{orb} (S_{g,\mm})$. It follows that $S_g=S_{g,\mm}$ is an elliptic curve and $h_2: S_{g,\mm}\to E$ is an unramified cover. Surjectivity of $f_{\ast}: \pi_1^{orb} (S_{g,\mm})\to \pi_1 (E)$ implies that $S_{g,\mm}=E$. Hence, $f$ has connected and non multiple fibres.
\end{proof}

\begin{lemma}
\label{propConnGenExs}
  Under the assumptions of Theorem \ref{thmExsGenClass}, the maps $h$, $h_l$, $f_l$ and $g_l$, $1\leq l \leq k$, have connected fibres.
\end{lemma}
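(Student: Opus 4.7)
The maps $g_l$ are direct product projections with fibre the connected product $S_{g_1}\times \cdots \times S_{g_{k-l}}$, so their fibres are connected. Since $h_l = f_l \circ g_l$ (and $h = h_k$), connectedness of the fibres of $f_l$ implies that of $h_l$ and $h$. It remains to prove that each $f_l$, $1\leq l\leq k$, has connected fibres.

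For this I would use the factorisation $f_l = \bar{B}_l \circ \Phi_l$, where $\Phi_l = (\alpha_{k-l+1}, \ldots, \alpha_r)\colon Y_l \to E^{\times(r-k+l)}$ is the product of the branched covers and $\bar{B}_l \colon E^{\times(r-k+l)} \to E^{\times l}$ is the holomorphic group homomorphism attached by Lemma~\ref{lemAutkEll} to the integer matrix $B_l = (\pi_l v_{k-l+1} \mid \cdots \mid \pi_l v_r)$. By property~(P'), the first $l$ columns of $B_l$ are the standard basis of $\ZZ^l$, so $B_l$ is surjective as a map of lattices and $\ker \bar{B}_l$ is a connected $(r-k)$-dimensional subtorus. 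Every fibre of $\bar{B}_l$ is therefore a connected translate of this subtorus, and for generic $y\in E^{\times l}$ one has $f_l^{-1}(y) = \Phi_l^{-1}(\bar{B}_l^{-1}(y))$ with $\bar{B}_l^{-1}(y)$ a connected subvariety.

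The task thus reduces to showing that the preimage of this connected subvariety under the branched cover $\Phi_l$ is connected. This is a monodromy question: the number of components equals the number of orbits of the monodromy action of the fundamental group of the unbranched part of $\bar{B}_l^{-1}(y)$ on a generic fibre. Since $\Phi_l = \prod_i \alpha_i$ is a product of branched covers whose base has abelian fundamental group, this action factors through the finite abelian group $\prod_{i=k-l+1}^r \pi_1 E / \alpha_{i*}(\pi_1 S_{g_i})$, which acts transitively on the generic fibre of $\Phi_l$ (because each $\alpha_i$ has connected source). The factors with $i\leq k$ are trivial by the surjectivity hypothesis on $\alpha_{i*}$, so it suffices to show that $\ker B_l = \pi_1 \bar{B}_l^{-1}(y)$ surjects onto $\prod_{i>k} \pi_1 E / \alpha_{i*}(\pi_1 S_{g_i})$. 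Parametrising $\ker B_l$ by the free coordinates $(\gamma_{k+1}, \ldots, \gamma_r) \in (\pi_1 E)^{r-k}$, with the coordinates $\gamma_{k-l+1}, \ldots, \gamma_k$ uniquely determined through the relation $\pi_l v_{k-l+j} = e_j'$, this surjectivity will be immediate.

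The main technical obstacle will be the careful justification of the monodromy step: removing the codimension-one branch locus of $\Phi_l$ from $\bar{B}_l^{-1}(y)$ enlarges the fundamental group, and one must verify that the resulting composition into the deck group of the Galois closure of $\Phi_l$ still has the claimed image, so that the calculation with $\ker B_l$ is legitimate. This is a standard bookkeeping argument on branched covers over complex tori, which I would carry out by restricting to a suitable Zariski-dense open locus where everything is a topological fibration.
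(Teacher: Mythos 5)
Your reduction to the maps $f_l$ is fine, and so is the observation that property (P') makes $B_l$ surjective on lattices, so that $\ker \overline{B}_l$ is a connected subtorus and every fibre of $f_l$ is the $\Phi_l$-preimage of a connected translate $\overline{B}_l^{-1}(y)$. The gap is in the monodromy step, and it is not bookkeeping: the claim that the monodromy action of $\pi_1$ of the unbranched part of $\overline{B}_l^{-1}(y)$ on the generic fibre of $\Phi_l$ \emph{factors through} $\prod_i \pi_1 E/\alpha_{i,\ast}(\pi_1 S_{\g_i})$ is false. The monodromy of each branched cover $\alpha_i$ is a transitive action of $\pi_1(E\setminus B_i)$ on a $\deg\alpha_i$-point fibre which does not factor through $\pi_1 E$ at all -- if it did, the local monodromies at the branch points would be trivial and $\alpha_i$ would be unramified, impossible for $\g_i\geq 2$. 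The group $\pi_1 E/\alpha_{i,\ast}(\pi_1 S_{\g_i})$ acts (transitively) only on the fibre of the maximal unramified intermediate cover of $E$, not on the fibre of $\alpha_i$, so it also does not ``act transitively on the generic fibre of $\Phi_l$''. Consequently your computation with $\ker B_l$ surjecting onto the abelian quotients does not establish what you need, namely that the image of $\pi_1\bigl((\overline{B}_l^{-1}(y))^{\circ}\bigr)$ in the full monodromy group $\prod_i M_i\leq\prod_i \mathrm{Sym}(\deg\alpha_i)$ acts transitively on the \emph{product} of the fibres. Transitivity on a product fibre is a Goursat-type independence statement (fibre products of covers are typically disconnected even when every coordinate projection is connected), it cannot be read off abelianised data, and in concrete cases it is forced by the local monodromies around the intersections of $\overline{B}_l^{-1}(y)$ with the branch divisors together with the torus directions -- precisely the analysis you defer to ``standard bookkeeping on branched covers''. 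As written, the proposal reduces the lemma to a statement essentially as hard as the lemma itself and then justifies it by an invalid factorisation.

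For comparison, the paper avoids monodromy entirely: it proves connectedness of the fibres of $f_l$ by induction on $l$, restricting $f_l$ over the elliptic-curve slice $x^{0,l}+Y^{k-l+1}/Y^{k-l}$, whose preimage is $S_{\g_{k-l+1}}\times H_{l-1}$ with $H_{l-1}$ connected by induction, and then applying the Stein-factorisation criterion of Lemma \ref{lemNewConnFib}: surjectivity of $\alpha_{k-l+1,\ast}$ (respectively $\alpha_{k,\ast}$ when $l=1$) gives surjectivity on $\pi_1$, and a $\ZZ^2$-subgroup with finite-index image forces the Stein curve to be $E$ itself, hence connected fibres. If you want to salvage your route, you would have to carry out the transitivity argument on the product fibre honestly (using the local monodromies along each branch divisor met by the slice), or simply replace the monodromy step by a Stein-factorisation argument as in Lemma \ref{lemNewConnFib}.
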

\begin{proof}
It suffices to consider the maps $f_l$, as the maps $g_l$ clearly have connected fibres and the connectedness of the fibres of the maps $h_l$ then follows from the identity $h_l=f_l\circ g_l$.

Choose a generic point $x=x_k\in E^{\times k}=Y^k$ as in Section \ref{sec:Restrictions}, let $H_l=f_l^{-1}(x_l)$, $\overline{H}_l=h_l^{-1}(x_l)$ be the corresponding fibres, and denote $E_l:= x_l+Y^{k-l+1}/Y^{k-l}$. The proof is by induction on $l$.

First consider $l=1$. Then we have
\[
f_1= \pi_1 \circ \left(v_k\mid \dots \mid v_r\right) \circ \left(\alpha_k,\dots, \alpha_r\right): S_k\times \dots \times S_r\to Y_1=\left\{0\right\} \times E.
\]
In particular, we have $f_1 =\sum_{j=k}^r v_{k,j}\cdot \alpha_j$. By assumption on $\mathcal{C}$, $r-k+1\geq 2$, and we have $v_{k,k}=1$ and $v_{k,j}\neq 0$ for $j\geq k$. Since $\alpha_k: S_k\to E$ is surjective on fundamental groups, the same holds for $f_1$. Furthermore, the restriction $f_1|_{S_j}$ defines a branched covering of $E$ for $k\leq j \leq r$. Hence, $f_{1,\ast}(\pi_1 (S_j))\leq \pi_1 (\left\{0\right\} \times E)$ is a finite index subgroup for $k\leq j \leq r$. It follows that $f_1$ satisfies all conditions of Lemma \ref{lemNewConnFib} and therefore has connected fibres.

Let now $2\leq l \leq k$. By choice of $x_k$, the corestriction
\[
f_l|_{f_l^{-1}(E_l)}:f_l^{-1}(E_l) \to E_l=x_l+ Y^{k-l+1}/Y^{k-l}
\]
has smooth generic fibre $H_l$. Since by definition $\pi_l(v_{k-l+1})=\left(\begin{array}{c}1\\0\\ \vdots\\ 0\end{array}\right)\in\left\{0\right\}\times \ZZ^l$, we obtain
\[
f_l^{-1}(E_l)=S_{k-l+1}\times f_{l-1}^{-1}(x_{l-1})= S_{k-l+1}\times H_{l-1},
\]
with $H_{l-1}$ smooth and connected by induction assumption.

Choose a basepoint $z_0\in S_{k-l+1}$. By assumption on $\mathcal{C}$, the set $\pi_l(\left\{v_{k-l+2},\dots,v_r\right\})$ spans $\mathds{C}^l$. Thus, the map
\[
\sum_{j=k-l+2}^r \pi_l(v_j)\cdot \alpha_j: S_{k-l+2}\times \dots \times S_{r} \to Y_l = \left\{0\right\}\times  E^{\times l}
\]
is a surjective holomorphic map. Thus, the same holds for the restriction $$f_l|_{\left\{z_0\right\} \times S_{k-l+2}\times \dots \times S_{r}}.$$ It follows that 
\[
f_l|_{\left\{z_0\right\} \times H_{l-1}} : \left\{z_0\right\} \times H_{l-1}\to E_l
\]
is a surjective holomorphic map between compact complex manifolds and therefore $$f_{l,\ast}(\pi_1 (H_{l-1}))\leq \pi_1 (E)$$ is a finite index subgroup. On the other hand $\alpha_{k-l+1}$ is surjective on fundamental groups by assumption. Hence, $f_l|_{f_l^{-1}(E_l)}$ satisfies the assumptions of Lemma \ref{lemNewConnFib}. Therefore $f_l$ has connected smooth generic fibres, completing the induction.
\end{proof}

We can give a precise description of the set of singular values of $f_l$.

\begin{lemma}
A point $(z_{k-l+1},\cdots, z_r)\in  S_{k-l+1}\times \cdots \times S_{r}$ is a critical point of $f_l$ if and only if $z_i$ is a critical point of $\alpha_i$ for at least $r-k+1$ of the $z_i$, where $k-l+1\leq i\leq r$. 

In particular, the set of critical values $C_{f_l}$ of $f_l$ is the union $C_{f_l}=\bigcup \limits_{i\in I_l} B_{l,i}$ of a finite number of $(l-1)$-dimensional submanifolds $B_{l,i}\subset S_{k-l+1}\times \cdots \times S_{r}$ with the property that for every $k-l+1\leq j\leq r$ the projection of $B_{l,i}$ onto $S_{j}$ is either surjective or has finite image. 
\label{lemCritValfl}
\end{lemma}
\begin{proof}
Consider the differential
 \begin{equation}
  \D f_l = \left(\D \pi_l (v_{k-l+1})\cdot \mathrm{d} \a_{k-l+1},   \dots  ,\D\pi_l(v_r) \cdot \mathrm{d} \a_r\right).
 \label{eqnCritPt} 
 \end{equation}

Note that by definition of $\pi_l$ the vector $\D \pi_l(v_i)$ is the vector in $\ZZ^{l}$ consisting of the last $l$ entries of $v_i$. By property (P'), any $k$ vectors in $\mathcal{C}$ form a linearly independent set. Furthermore, we chose $\mathcal{C}$ such that $\mathcal{E}_1= \left\{v_1,\dots, v_k\right\}$ is the standard basis of $\ZZ^k$. This implies that the set
\[
\mathcal{C}_l = \left\{ \D \pi_l (v_{k-l+1}), \dots, \D\pi_l(v_r)\right\} \subset \ZZ^l
\]
also has property (P'). In particular, any choice of $l$ vectors in $\mathcal{C}_l$ forms a linearly independent set.

It thus follows from \eqref{eqnCritPt} that the critical points of $f_l$ are precisely the points $(z_{k-l+1},\cdots, z_r)\in  S_{k-l+1}\times \cdots \times S_{r}$ such that $z_i$ is a critical point of $\alpha_i$ for at least $r-k+1$ of the $z_i$, proving the first part of the statement. The second part is an immediate consequence of the first part.
\end{proof}

\begin{proposition}
 For all $l$ the map $h_l$ has fibrelong isolated singularities. More precisely, the factorisation $h_l=f_l\circ g_l$ satisfies that $g_l$ is a locally trivial fibration and $f_l$ has isolated singularities. Furthermore, the dimension of a smooth generic fibre $H_l$ of $f_l$ is $r-k$ for $1\leq l \leq k$.
 \label{propIsolSingGen}
\end{proposition}

\begin{proof}
Recall that by definition of $f_l$ and $g_l$ we have $h_l=f_l\circ g_l$, and the map $g_l$ has fibres $F_l= S_1\times \dots \times S_{k-l}$ a direct product of Riemann surfaces. In particular, it is a locally trivial fibration with connected fibres. 

Lemma \ref{lemCritValfl} provides us with an explicit description of the set of critical points of $f_l$.  Linear independence of any $l$ vectors in the set $\mathcal{C}_l$, defined in the proof of Lemma \ref{lemCritValfl}, implies that the restriction of $f_l$ to any of the $B_{l,i}$ is locally (and thus globally) finite-to-one. Hence, the intersection $C_{f_l}\cap H_{l,y}$ is finite for any fibre $H_{l,y}=f_l^{-1}(y)$, $y\in \left\{0\right\}\times E^{\times l}$. In particular, the map $f_l$ has isolated singularities. 

The assertion that a smooth generic fibre $H_l$ of $f_l$ has dimension $r-(k-l)- l=r-k$ follows immediately from the definition of $f_l$, completing the proof.
\end{proof}

\begin{proof}[Proof of Theorem \ref{thmExsGenClass}]
It follows from Lemma \ref{propConnGenExs} and Proposition \ref{propIsolSingGen} that $h$ satisfies the hypotheses of Theorem \ref{thmFiltVerGen} for $n=r-k$. Thus, (2) with the exception of irreducibility and the moreover part follow (after observing that the image of $\pi_1 (\overline{H})$ is full, because the restriction of $h_{\ast}$ to each factor has non-trivial kernel). 
Finally, (3), (4) and irreducibility follow from Lemma \ref{lemNotProdNotFr} in Section \ref{secNotProd}.
\end{proof}

\begin{proof}[Proof of Theorem \ref{thmIntroA}]
The only part of Theorem \ref{thmIntroA} that is not a direct consequence of Theorem \ref{thmExsGenClass} and its proof is that for a suitable choice of the map $h$ the group $\pi_1 (\overline{H})\leq \pi_1 (S_1)\times \dots \times \pi_1(S_r)$ is a subdirect product. Considering the set $\mathcal{C}$ such that $\left\{v_1,\dots,v_k\right\}$ is a standard basis of $\ZZ^k$ and $v_{k+1}=v_1+\dots +v_k$ we see that the complement $\mathcal{C}\setminus \left\{v_i\right\}$ of any vector contains a basis of $\ZZ^k$. Choosing $\alpha_{k+1}$ such that it induces an epimorphism on fundamental groups then assures that the restriction of $h_{\ast}$ to $\pi_1 (S_{1})\times \dots \times \pi_1 (S_{i-1})\times 1 \times \pi_1 (S_{i+1})\times \dots \times \pi_1 (S_{r})$ is surjective on fundamental groups for $1\leq i \leq r$ (see proof of Lemma \ref{lemNotProdNotFr} for more details on $h_{\ast}$). It follows that $\pi_1(\overline{H})$ is subdirect.
 \end{proof}

\section{Examples from maps onto a product of two elliptic curves}
\label{secExCombined}

In the light of Theorem \ref{thmExsGenClass}(3) and (4) one wonders if there is a relation between the finiteness properties of an irreducible full subdirect K\"ahler group $G\leq \pi_1(S_1)\times \dots \times \pi_1(S_r)$ and the maximal dimension of a smooth embedded subvariety $X\stackrel{\iota}{\hookrightarrow} R_1\times \dots \times R_r$ of a direct product of $r$ Riemann surfaces $R_i$ with $\iota_{\ast}(\pi_1 (X))\cong G$. In this section we will show that this is not the case. More precisely, we will show that for $2\leq m \leq r-1$ and any choice of surface groups $\pi_1(S_i)$ there is a full irreducible K\"ahler subgroup $G\leq \pi_1(S_1)\times \dots \times \pi_1(S_r)$ of finiteness type $\mathcal{F}_m$ but not of type $\mathcal{F}_{m+1}$ which is the fundamental group of a complex submanifold of complex codimension two in a direct product $S_1\times \dots \times S_r$ of Riemann surfaces. We obtain our examples by combining the ideas of the construction in Section \ref{secExamples} with Addendum \ref{addFiltVerGenWeak} and Theorem \ref{thmFiltVerGen}. 

Note that codimension two is the best we can hope for with our construction. Indeed, if $E$ is an elliptic curve, let $h: S_{1}\times \dots \times S_{r}\to E$ be a holomorphic map and let $(z_1,\dots, z_r)\in S_{1}\times \dots \times S_{r}$ be a base point. Then, for $2\leq m\leq r$, the restriction $h|_{\left\{(z_1,\dots,z_{m-1})\right\} \times S_{m}\times \left\{(z_{m+1},\dots,z_r)\right\}}$ is either trivial or surjective and thus the image $h_{\ast}(1\times \dots \times 1\times  \pi_1 (S_{m})\times 1 \times \dots \times 1)\leq \pi_1 (E)$ is either trivial or a finite index subgroup. Therefore, if $h$ induces a short exact sequence on fundamental groups, then $\ker (h_{\ast})$ is a product of finitely many surface groups and a group of the form constructed in \cite[Theorem 1.1]{Llo-16-II}. In particular, $\ker (h_{\ast})$ is not irreducible unless it is of type $\mathcal{F}_{r-1}$ and not of type $\mathcal{F}_r$.

As before, we choose elements $v_1,\dots, v_r \in \ZZ^2$, as well as ramified covers $\alpha_i: S_{i}\to E$ of an elliptic curve $E$, and define a holomorphic map 
\begin{equation}
h=\sum_{i=1}^r v_i\cdot \alpha_i : S_{1}\times \dots \times S_{r}\to E^{\times 2}.
\label{eqdefh}
\end{equation} 
However, we now vary our choice of the $v_i$'s to obtain examples with the desired properties. More precisely, for $r\geq 4$, $m\geq 1$ and $r-m\geq 3$, we choose $v_1,\dots, v_m=\left( \begin{array}{c} 1\\ 0 \end{array} \right)$, $v_{m+1}=\left(\begin{array}{c} 0\\ 1 \end{array}\right)$, $v_{m+2}=\left(\begin{array}{c} 1\\ 1\end{array}\right)$ and $\left\{ v_{m+3}, \dots, v_r \right\}\subset \ZZ^2$ such that $\left\{ v_i, v_j\right\}$ are linearly independent for $m\leq i < j \leq r$. For the remainder of this section we denote $\mathcal{C}:=\left\{v_1,\dots,v_r\right\}$ and $h$ the holomorphic map in \eqref{eqdefh} defined by this choice of $\mathcal{C}$. 

The main result of this section is:
\begin{theorem}
 Assume that $\mathcal{C} \subset \ZZ^2$ and $h$ are as defined above and that $\alpha_m$, $\alpha_{m+1}$ and $\alpha_{m+2}$ are surjective on fundamental groups. 
 
 Then $h$ satisfies the hypotheses of Addendum \ref{addFiltVerGenWeak} and Theorem \ref{thmFiltVerGen}, and $\overline{H}$ has the following properties:
 \begin{enumerate}
 \item $\overline{H}\subset S_1\times \dots \times S_r$ is a K\"ahler (projective) subvariety of codimension 2;
  \item the inclusion $\overline{H}\hookrightarrow S_1\times \dots \times S_r$ induces an embedding $\pi_1 (\overline{H})\leq \pi_1 (S_1)\times  \cdots \times \pi_1 (S_r)$ as an irreducible full subdirect product with $\pi_1(\overline{H})=\ker (h_{\ast})$; and
  \item $\pi_1 (\overline{H})$ is of finiteness type $\mathcal{F}_{r-m-1}$, but not of type $\mathcal{F}_{r-m}$.
 \end{enumerate} 
 Moreover, $\pi_i (\overline{H}) =\left\{0\right\}$ for $2\leq i \leq r-m-2$.

 \label{thmExtendedRange}
\end{theorem}

As in Section \ref{secExamples} we denote by $\pi_l : E^{\times 2}\to \left\{ 0 \right\} \times E^{\times l}$ the projection onto the last $l$ factors and by $h_l=\pi_l\circ h$ the composition. The map $h_l$ factors as $h_l = f_l \circ g_l$ for $1\leq l \leq 2$ with
\[
 f_2=\left(v_1\mid \dots \mid v_r \right) \circ \left(\alpha_1, \dots, \alpha_r \right) : S_{1}\times \dots \times S_{r}\to  E^{\times 2},
\]
\[
 g_2= S_{1}\times \dots \times S_{r} \to S_{1}\times \dots \times S_{r},
\]
\[
 f_1=\pi_1 \circ \left(v_{m+1}\mid \dots \mid v_r \right) \circ \left(\alpha_{m+1}, \dots, \alpha_r \right) : S_{m+1}\times \dots \times S_{r}\to  \left\{0\right\} \times E,
\]
and
\[
 g_1= S_{1}\times \dots \times S_{r} \to S_{m+1}\times \dots \times S_{r},
\]
where $g_1$ is the canonical projection with fibre $F_1=S_{1}\times \dots \times S_{m}$ a product of closed hyperbolic surfaces.

The main difference between the proof of Theorem \ref{thmExtendedRange} and the one of Theorem \ref{thmExsGenClass} is that the map $f_2$ does not have isolated singularities. However, by Addendum \ref{addFiltVerGenWeak} it suffices to show that the corestriction $f_2|_{f_2^{-1}(E\times \left\{e\right\})} : f_2^{-1}(E\times \left\{e\right\}) \to E\times \left\{ e \right\}$ has isolated singularities for $e\in E$ to obtain the consequences of Theorem \ref{thmFiltVerGen}. We thus start with two preliminary results that show that the $h_l$ do indeed satisfy the conditions of Addendum \ref{addFiltVerGenWeak}.

\begin{lemma}
\label{lemConnGenExsExt}
 Under the assumptions of Theorem \ref{thmExtendedRange}, the maps $h_i$, $f_i$ and $g_i$, $i=1,2$, have connected fibres and the fibres of the $g_i$ are aspherical.
\end{lemma}
\begin{proof}
 Since the maps $\alpha_m$ and $\alpha_{m+1}$ are surjective on fundamental groups, we see that by choice of $\mathcal{C}$, $r$ and $m$, the same argument as in the proof of Lemma \ref{propConnGenExs} shows connectedness of fibres. Moreover, the fibres of the $g_i$ are direct products of Riemann surfaces and thus aspherical.
\end{proof}

\begin{lemma}
 The map $f_1$ and the corestriction $f_2|_{f_2^{-1}(E\times \left\{e\right\})} : f_2^{-1}(E\times \left\{e\right\}) \to E\times \left\{ e \right\}$ have isolated singularities for all $e\in E$. 
 \label{lemIsSingExt}
\end{lemma}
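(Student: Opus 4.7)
The plan is to establish both claims by direct analysis of the Jacobians of $f_1$ and $f_2$, exploiting the specific linear-algebraic structure imposed on $\mathcal{C}$ in Theorem~\ref{thmExtendedRange}. The key preliminary observation is that, since $v_{m+1}=(0,1)^t$ and the pair $\{v_m,v_j\}=\{(1,0)^t,v_j\}$ is linearly independent for every $j\geq m+1$, the second coordinate of $v_j$ is non-zero for all $j\geq m+1$; symmetrically, the linear independence of $\{v_{m+1},v_j\}$ gives that the first coordinate of $v_j$ is non-zero for every $j\geq m+2$. For $f_1=\sum_{j=m+1}^r \pi_1(v_j)\cdot \alpha_j$, the differential $\D f_1$ at $(z_{m+1},\dots,z_r)$ has $j$-th entry $\pi_1(v_j)\cdot \mathrm{d}\alpha_j(z_j)$, so $z$ is critical iff $\mathrm{d}\alpha_j(z_j)=0$ for every $j\geq m+1$. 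Since each branched cover $\alpha_j$ has only finitely many critical points, the critical locus of $f_1$ is finite, and in particular $f_1$ has isolated singularities.

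For the corestriction, I would first use $\pi_1(v_i)=0$ for $i\leq m$ to identify $f_2^{-1}(E\times\{e\})= S_{\g_1}\times \dots \times S_{\g_m}\times f_1^{-1}(e)$, and to write the first coordinate of $f_2$ as
\[
F(z)=\sum_{i=1}^m \alpha_i(z_i)+\sum_{j=m+2}^r v_j^{(1)}\alpha_j(z_j),
\]
where $v_j^{(1)}$ denotes the first coordinate of $v_j$. Following the strategy of Proposition~\ref{propIsolSingGen}, I would then describe the critical locus of $f_2$ itself: the columns of $\D f_2$ are $\mathrm{d}\alpha_i(z_i)\cdot v_i\in \CC^2$, so $z$ is critical iff all non-zero columns are collinear. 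Since $v_1=\dots=v_m=(1,0)^t$ and any two of $v_m,v_{m+1},\dots,v_r$ are linearly independent, the only collinear subsets of $\mathcal{C}$ of size at least two are subsets of $\{v_1,\dots,v_m\}$. Consequently $z$ is critical for $f_2$ iff either (a) $\mathrm{d}\alpha_j(z_j)=0$ for all $j\geq m+1$, or (b) at most one of the $\mathrm{d}\alpha_i(z_i)$, $1\leq i \leq r$, is non-zero.

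At a smooth point $z\in V:=f_2^{-1}(E\times\{e\})$, $T_zV=\ker\D h_1|_z$, and a Lagrange-multiplier computation shows $z$ is critical for $F|_V$ iff $\D F|_z$ is proportional to $\D h_1|_z$, equivalently iff $\D f_2|_z$ has rank strictly less than two. Intersecting (a) and (b) with the smooth locus of $V$: a case (a) point forces $(z_{m+1},\dots,z_r)$ to be critical for $f_1$ with value $e$, hence singular in $f_1^{-1}(e)$ and therefore in $V$, contributing nothing on the smooth part; for a case (b) point on smooth $V$, the distinguished index $i_0$ must lie in $\{m+1,\dots,r\}$ (otherwise the point is also of type (a)), and then the $z_i$ with $i\neq i_0$ range over finite critical sets, while $z_{i_0}$ is pinned down to finitely many values by the constraint $f_1(z_{m+1},\dots,z_r)=e$, using $\pi_1(v_{i_0})\neq 0$ and that $\alpha_{i_0}$ is a finite branched cover. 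This gives a finite critical locus of the corestriction on the smooth part of $V$, establishing isolated singularities. The main obstacle is a careful bookkeeping at critical values $e$ of $f_1$, where $V$ acquires singularities: one must verify that the positive-dimensional stratum coming from case (a) lies entirely in the singular locus of $V$ and so does not contribute to the critical set of the corestriction in the relevant sense, which is handled by the explicit product decomposition $V=S_{\g_1}\times\dots\times S_{\g_m}\times f_1^{-1}(e)$ above.
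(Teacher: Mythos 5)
Your argument is correct in substance, and its computational core is the same as the paper's: both compute $\D f_2$ column by column, arrive at the same dichotomy for critical points of $f_2$ (your (a)/(b) are the paper's (1)/(2)), and both dispose of the type-(b) points with distinguished index $i_0\geq m+1$ by noting that the remaining coordinates run over finite critical sets while $z_{i_0}$ is pinned down by the second coordinate of $f_2$, since the second entry of $v_{i_0}$ is non-zero and $\alpha_{i_0}$ is finite-to-one. (Your treatment of $f_1$, via the observation that its critical set is literally finite, is fine and more direct than the paper's appeal to the argument of Proposition \ref{propIsolSingGen}.) Where you genuinely diverge is in the handling of the type-(a) points. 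The paper never discusses smoothness of $V=f_2^{-1}(E\times\{e\})$: at a point satisfying (1) but not (2) there is some $i\leq m$ with $\mathrm{d}\alpha_i(z_i)\neq 0$, and since $v_i=(1,0)^t$ the curve $\{(z_1,\dots,z_{i-1})\}\times S_{\g_i}\times\{(z_{i+1},\dots,z_r)\}$ lies entirely in $V$; along it the corestriction has non-vanishing derivative, so such a point is simply not a critical point of the corestriction, whatever the local structure of $V$. You instead discard these points as singular points of $V$, and this has two soft spots. First, the step ``critical for $f_1$ with value $e$, hence singular in $f_1^{-1}(e)$'' is not automatic: a fibre through a critical point need not be singular as a reduced analytic set (consider $w\mapsto w^2$). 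It is true here precisely because you have just shown the critical set of $f_1$ to be finite, so each such point is an isolated critical point on a manifold of dimension $r-m\geq 2$ and the fibre acquires a genuine isolated hypersurface singularity; the same fact is what guarantees $T_zV=\ker \D h_1|_z$ at every smooth point of $V$, so it should be stated. Second, asserting that the case-(a) stratum ``does not contribute to the critical set in the relevant sense'' leaves unresolved what a singularity of the corestriction means at a singular point of the domain; note that for $m\geq 2$ and $e$ a critical value this stratum meets fibres of the corestriction in positive-dimensional sets, so the lemma only holds if such points are not counted, which is exactly what the paper's curve argument establishes directly. For the actual application via Remark \ref{rmkFiltVerGenWeak} only generic $e$ is needed, where $V$ is smooth and your argument is complete; but as a proof of the lemma for all $e$, the paper's route is the more robust one.
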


\begin{proof}
 For the map $f_1$ this follows by the same argument as in the proof of Proposition \ref{propIsolSingGen}. The differential of $f_2$ is 
\begin{align*}
 Df_2 &= \left(v_1\cdot D\alpha_1 \dots v_r\cdot D\alpha_r\right)\\
      &= \left( \left( \begin{array}{c} 1\\ 0 \end{array}\right)\cdot D\alpha_1 \cdots \left(\begin{array}{c} 1\\ 0 \end{array} \right)\cdot D\alpha_m ~ v_{m+1} \cdot D\alpha_{m+1} ~\cdots ~ v_r \cdot D \alpha_r\right).
\end{align*}

Due to our assumptions on $\mathcal{C}$, a point $z=(z_1,\dots,z_r)\in S_{1}\times \dots \times S_{r}$ is a critical point of $f_2$ if and only if at least one of the following holds:
\begin{enumerate}
 \item $D\alpha_{m+1} (z_{m+1})=\dots = D\alpha_r (z_r)=0$;
 \item $D\alpha_1 (z_1)=\dots = D\alpha_m(z_m)= 0$ and at least $r-m-1$ of the $D\alpha_i(z_i)$ vanish for $i\geq m+1$.
\end{enumerate}

The locus of points satisfying (2) is a union of finitely many one-dimensional subvarieties of $S_{1} \times \dots \times S_{r}$ and it intersects each fibre of $f_2$ in at most finitely many points. Hence, these singularities are isolated for $f_2$ and thus for $f_2|_{f_2^{-1}(E\times \left\{e\right\})}$.

Observe that for the points $z\in f_2^{-1}(E\times \left\{e\right\})$ satisfying (1) but not (2), there exists at least one $i$ with $1\leq i\leq m$ such that $D\alpha_i(z_i)\neq 0$. The composition of $f_2|_{\left\{(z_1,\dots,z_{i-1})\right\} \times S_{i} \times \left\{ (z_{i+1},\dots, z_r\right\}}$ with the projection onto the second factor of $E^{\times 2}$ is constant. Thus, $\left\{(z_1,\dots,z_{i-1})\right\} \times S_{i} \times \left\{ (z_{i+1},\dots, z_r\right\} \subset f_2^{-1}(E\times \left\{e\right\})$ . In particular, $D\alpha_i(z_i)\neq 0$ implies that $D(f_2|_{f_2^{-1}(E\times \left\{e\right\})})(z)\neq 0$. Since $E\times \left\{e\right\}$ is one-dimensional, $z$ is not a critical point of the restriction $f_2|_{f_2^{-1}(E\times \left\{e\right\})}$. Hence, all critical points of $f_2|_{f_2^{-1}(E\times \left\{e\right\})}$ satisfy (2). 

It follows that $f_2|_{f_2^{-1}(E\times \left\{e\right\})}$ has isolated singularities.
\end{proof}

\begin{proof}[Proof of Theorem \ref{thmExtendedRange}]
We argue as in Section \ref{sec:Restrictions} that for a generic point $x_2\in E^{\times 2}$ the projections $x_i=\pi_i(x_2)$ are regular values of the $h_i$. Thus, Lemmas \ref{lemConnGenExsExt} and \ref{lemIsSingExt} imply that all conditions of Addendum \ref{addFiltVerGenWeak} are satisfied. Our conditions on $\mathcal{C}$ and the proof of Theorem \ref{thmFiltVerGen} show that (2), with the exception of irreducibility, the moreover part and (1) hold, and that $\pi_1 (\overline{H})$ is of type $\mathcal{F}_{r-m-1}$. 

Thus, we only need to prove that $\pi_1(\overline{H})$ is irreducible and not of type $\mathcal{F}_{r-m}$. Observe that $\phi=h_{\ast}$ is defined by the surjective homomorphism
 \[
  h_{\ast}(g_1,\cdots,g_r)=\phi(g_1,\cdots,g_r)=\sum_{i=1}^r v_i\cdot \alpha_i(g_i) 
\in \pi_1 (E^{\times 2}) = (\ZZ^2)^2\cong \ZZ^4
\]
for $(g_1,\cdots,g_r)\in \pi_1 (S_{1})\times \cdots \times \pi_1 (S_{r})$.

We first prove irreducibility. Assume that there is a finite index subgroup $H_1 \times H_2 \leq \ker (\phi)$. Since $\ker (\phi)$ is a full subdirect product (after possibly passing to finite index subgroups of the $\pi_1 (S_{i})$), we obtain that there is a partition of $\left\{1,\dots, r\right\}$ into two sets $\left\{i_1,\dots,i_t\right\}$ and $\left\{ i_{t+1},\dots, i_r\right\}$ such that $H_1 \leq \pi_1 (S_{i_1})\times \dots \times \pi_1 (S_{i_t})$ and $H_2 \leq \pi_1 (S_{i_{t+1}})\times \dots \times \pi_1(S_{i_r})$ (because non-trivial elements of surface groups have cyclic centralizers).

Thus, for every element $(x_{i_1},\dots,x_{i_r})\in H_1\times H_2\leq \Gamma_{i_1}\times \dots \times \Gamma_{i_r}$, we have $(x_{i_1},\dots,x_{i_t},1,\dots,1)\in H_1\times 1 \leq \ker (\phi)$ and $(1,\dots,1,x_{i_{t+1}},\dots,x_{i_r})\in 1 \times H_2 \leq \ker (\phi)$. By our assumptions on $\mathcal{C}$ at least one of $\left\{v_{i_1},\dots,v_{i_t}\right\}$ and $\left\{v_{i_{t+1}},\dots,v_{i_r}\right\}$ contains a pair of linearly independent vectors, say $\left\{v_{i_{t+1}},v_{i_{t+2}}\right\}$ are linearly independent.

Let $A\leq \ZZ^4 = (\ZZ^2)^2$ be the finite index subgroup with $$A= \mathrm{Im} \left( v_{i_{t+1}}\cdot\alpha_{i_{t+1},\ast} + v_{i_{t+2}}\cdot \alpha_{i_{t+2},\ast}\right).$$ Then for any element $x_{i_1}\in \Lambda_{i_1}$ of the finite index subgroup $\Lambda_{i_1}:=\left( v_{i_1} \cdot \alpha_{i_1 ,\ast}\right)^{-1}(A)\unlhd \Gamma_{i_1}$ there is $(x_{i_{t+1}},x_{i_{t+2}})\in \Gamma_{i_{t+1}}\times \Gamma_{i_{t+2}}$ with $(x_{i_1},1,\dots,1,x_{i_{t+1}},x_{i_{t+2}},1,\dots,1)\in \ker (\phi)$. Hence, the intersection $\ker (\phi) \cap \left(\Gamma_{i_1}\times \Gamma_{i_{t+1}}\times \Gamma_{i_{t+2}}\right)$ projects to a finite index subgroup of $\Gamma_{i_1}$. Since $H_1\times H_2\leq \ker (\phi)$ is a finite index subgroup, the same is true for the projection of the intersection $\left(H_1\times H_2\right) \cap \left(\Gamma_{i_1}\times \Gamma_{i_{t+1}}\times \Gamma_{i_{t+2}}\right)$ to $\Gamma_{i_1}$. Thus, $H_1\times 1 $ contains a finite index subgroup of $\Gamma_{i_1}$. This is impossible, because $\ker (\alpha_{i_1,\ast})\leq \Gamma_{i_1}$ has infinite index. It follows that $\ker (\phi)$ is irreducible.

Finally, observe that $v_1=\dots=v_m=\left(\begin{array}{c} 1\\ 0 \end{array}\right)$ implies that the image $\phi(\Gamma_1\times \dots \times \Gamma_m)\cong \ZZ^2 \leq \ZZ^4=\mathrm{Im} \phi$ is not a finite index subgroup. Thus, the group $\ker \phi$ is not of type $\mathcal{F}_{r-m}$, by Corollary \ref{propCoNilpFm} below.
\end{proof}

\begin{remark}
 Note that one could also apply the converse direction of Theorem \ref{thmKochlVSPk} to obtain that the groups constructed in Theorem \ref{thmExsGenClass} and Theorem \ref{thmExtendedRange} have finiteness type $\mathcal{F}_{r-k}$ (resp. $\mathcal{F}_{r-m-1}$). However, the proof we give actually provides the stronger result that in fact classifying spaces for our examples can be constructed from compact K\"ahler manifolds by attaching only cells of dimension larger than $r-k$ (resp. $r-m-1$).
\end{remark}

\begin{remark}
\label{remNotIsom}
Theorem \ref{thmExsGenClass}(4) and Theorem \ref{thmExtendedRange}(1) show that the examples obtained from Theorem \ref{thmExsGenClass} for $k\geq 3$ are not isomorphic to any of the examples obtained from Theorem \ref{thmExtendedRange}.
\end{remark}

\begin{remark}
 The construction described in this section generalises to epimorphisms to $\ZZ^{2k}$ for any $k\geq 2$. This produces irreducible coabelian K\"ahler subgroups of a direct product of $r\geq 3$ surface groups of type $\mathcal{F}_m$ and not $\mathcal{F}_{m+1}$ for $2\leq m \leq r-k$, which arise as fundamental groups of a codimension $k$ subvariety of a direct product of Riemann surfaces.  For simplicity of notation, we restricted ourselves to the case $k=2$ in our explicit computations.
\end{remark}

\section{Finiteness properties and irreducibility}
\label{secNotProd}

In this section we want to determine the precise finiteness properties of our examples and prove that they are irreducible.

 Let $H\leq G = G_1\times\dots \times G_r$ be a subgroup of a direct product of groups $G_1,\cdots, G_r$. For every $1\leq i_1 < \dots < i_k \leq r$ denote by $p_{i_1,\cdots,i_k}: G \rightarrow G_{i_1}\times \dots \times G_{i_k}$ the canonical projection. 
 
\noindent We say \cite{BriHowMilSho-13,Kuc-14} that the group $H$ \textit{virtually surjects onto $k$-tuples} (resp. \textit{surjects onto $k$-tuples}, resp. \textit{virtually surjects onto pairs (VSP)}) if for every $1\leq i_1 < \cdots < i_k\leq r$ the group $p_{i_1,\cdots, i_k}(H)$ has finite index in $G_{i_1}\times \dots \times G_{i_k}$ (resp. we have equality $p_{i_1,\cdots,i_k}(H)=G_{i_1}\times \dots \times G_{i_k}$, resp. $H$ virtually surjects onto $2$-tuples). 
 
 We further say that $H$ is \textit{coabelian of rank $k$} if there is an epimorphism $\phi: G_1\times \dots \times G_r\to \ZZ^k$ such that $H=\ker (\phi)$, and that $H$ is virtually coabelian of rank $k$ if there are finite index subgroups $H_0\leq H$ and $G_{i,0}\leq G_i$ such that $H_0\leq G_{1,0}\times \dots \times G_{r,0}$ is coabelian of rank $k$. It is not hard to see that the coabelian rank of $H$ is invariant under passing to finite index subgroups.
 
\begin{remark}
 For the remainder of the section we will be considering the finiteness properties and irreducibility of subgroups of direct products of surface groups. Note that the results in the literature that we refer to are stated in the more general context of non-abelian limit groups, which include surface groups and free groups. We want to emphasize at this point that in fact all results of this section, with the exception of Lemma \ref{lemNotProdNotFr}, remain true when replacing surface groups by non-abelian limit groups. The only change to the proofs is in Lemma 5.5, where we need to replace the fact that centralizers in surface groups are cyclic by the facts that non-abelian limit groups have trivial center and can not decompose as a direct product of two non-trivial groups.
\end{remark}

For subgroups of direct products of surface groups, a close relation between their finiteness properties and virtual surjection to $k$-tuples has been observed (see \cite{BriHowMilSho-13},\cite{Koc-10}, also \cite{Kuc-14}). In fact if a subgroup $H\leq G_1\times \cdots \times G_r$ of a direct product of finitely presented groups is subdirect then $H$ is finitely generated; and if it is VSP then $H$ is itself finitely presented \cite[Theorem A]{BriHowMilSho-13}. The converse is not true in general; it is true though if $G_1,\cdots, G_r$ are surface groups and $H$ is full subdirect \cite[Theorem D]{BriHowMilSho-13}.

More generally, it is conjectured \cite{Koc-10} that, for $G_1,\cdots, G_r$ surface groups (resp. non-abelian limit groups) and $H\leq G_1\times \cdots \times \times G_r$ a full subdirect product, the following are equivalent:
\begin{enumerate}
 \item $H$ is of type $\mathcal{F}_k$;
 \item $H$ virtually surjects onto $k$-tuples.
\end{enumerate}  

Kochloukova proved that (1) implies (2) and gave conditions under which (2) implies (1).

\begin{theorem}[{Kochloukova \cite[Theorem C]{Koc-10}}]
 For $r\geq 1$ let $\G_1, \dots , \G_r$ be surface groups, let $H\leq \G_1\times \dots \times \G_r$ be a full subdirect product, and let $2\leq k \leq r$. If $H$ is of type $\mathcal{F}_k$ then $H$ virtually surjects onto $k$-tuples. The converse is true if $H$ is virtually coabelian.
 \label{thmKochlVSPk}
\end{theorem}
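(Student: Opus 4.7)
The plan is to prove the two directions separately, since they require very different tools. I will write $G = G_1 \times \dots \times G_r$ throughout and, for a subset $S \subset \{1,\dots,r\}$, denote by $p_S: G \to \prod_{i \in S} G_i$ the coordinate projection.

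\textbf{Forward direction ($\mathcal{F}_k$ implies VSP$_k$).} I would proceed by downward analysis on $k$, or equivalently by an induction structured on the number of coordinates being projected out. Fix indices $1 \le i_1 < \dots < i_k \le r$, let $Q = p_{i_1,\dots,i_k}(H) \le \prod_j G_{i_j}$, and let $K = H \cap \ker p_{i_1,\dots,i_k}$, yielding the extension
\[
 1 \to K \to H \to Q \to 1.
\]
The strategy is to show that if $Q$ has infinite index in $\prod_j G_{i_j}$ then some homological obstruction forces $H$ to fail to be $\mathcal{F}_k$. The input is the structure theory of limit groups: each non-abelian limit group is of type $\mathcal{F}_\infty$, is coherent, has cohomological dimension at most $2$, and, crucially, every infinite-index subgroup is free (hence $\mathcal{F}_\infty$ but of strictly smaller cohomological dimension). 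Using the Lyndon--Hochschild--Serre (or equivariant) spectral sequence for the displayed extension, together with the induction hypothesis applied to $Q \le \prod_j G_{i_j}$ for its $(k-1)$-element sub-tuples, one controls $H_i(H, \mathbb{Z}G)$ and translates the $\mathcal{F}_k$ property of $H$ into finiteness properties that force $Q$ to be of finite index. This step uses the Bieri--Eckmann criterion: a group is of type $\mathrm{FP}_k$ iff its cohomology with coefficients in any filtered colimit of modules commutes with the colimit through degree $k$.

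\textbf{Converse direction (VSP$_k$ plus virtually coabelian implies $\mathcal{F}_k$).} After passing to finite-index subgroups I would assume $H = \ker \phi$ for an epimorphism $\phi: G \to \mathbb{Z}^n$. The natural tool here is the Bieri--Neumann--Strebel--Renz theory: by the Bieri--Renz criterion, $H$ is of type $\mathcal{F}_k$ iff every non-zero character $\chi: G \to \mathbb{R}$ factoring through $\phi$ satisfies $[\pm \chi] \in \Sigma^k(G)$. I would then apply Meinert's product formula
\[
 \Sigma^k(G_1 \times \dots \times G_r)^c \;=\; \bigcup_{k_1 + \dots + k_r = k-1} \Sigma^{k_1+1}(G_1)^c \ast \dots \ast \Sigma^{k_r+1}(G_r)^c,
\]
combined with the fact that for a non-abelian limit group $L$ one has $\Sigma^1(L) = \emptyset$ (equivalently, every character of $L$ has disconnected associated Cayley subset). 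The VSP$_k$ hypothesis ensures that $\phi$ does not factor through a projection omitting more than $r-k$ factors, which means any character $\chi$ factoring through $\phi$ restricts nontrivially to at least $r-k+1$ factors; this rules out membership in every cell of the complement on the right-hand side of Meinert's formula. This gives $\Sigma^k$-membership and hence the $\mathcal{F}_k$ conclusion.

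\textbf{Main obstacle.} The genuinely delicate step is the forward implication: turning the abstract finiteness property $\mathcal{F}_k$ of $H$ into the concrete geometric/group-theoretic conclusion that each $k$-fold projection is large. The spectral sequence argument requires precise knowledge of the homological behavior of $K$, which itself sits as a subgroup of a smaller product of limit groups, so the induction must be set up carefully so that $K$ inherits good enough finiteness to feed the $E_2$-page analysis. Handling the borderline case, where $Q$ has infinite index but the quotient $(\prod_j G_{i_j})/Q$ has no useful finiteness properties, is where the low-dimensional structure of limit groups (cohomological dimension $\leq 2$ and the Scott-like description of infinite-index subgroups) must be used in an essential way.
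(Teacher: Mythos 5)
A preliminary remark on the comparison: the paper does not prove this statement at all --- it is imported verbatim from Kochloukova \cite[Theorem C]{Koc-10} (with the observation that $\mathrm{FP}_k(\mathbb{Q})$ may be replaced by $\mathcal{F}_k$ via \cite[Corollary E]{BriHowMilSho-13}). So your proposal has to be measured against Kochloukova's argument, and there it has genuine gaps. For the forward direction, the structural ``inputs'' you invoke are false for limit groups: non-abelian limit groups need not have cohomological dimension $2$ (a rank-$n$ extension of centralizers contains $\mathbb{Z}^{n+1}$), and infinite-index finitely generated subgroups need not be free (e.g.\ $\langle w\rangle\times\mathbb{Z}\cong\mathbb{Z}^2$ inside the non-abelian limit group $F_2\ast_{\langle w\rangle}(\langle w\rangle\times\mathbb{Z})$ has infinite index and is not free). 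Beyond that, the step ``use the LHS spectral sequence and the Bieri--Eckmann criterion to force the projection to have finite index'' is not an argument but a restatement of the theorem: converting $\mathcal{F}_k$ into largeness of $k$-fold projections is precisely the hard content of Kochloukova's paper (building on Bridson--Howie--Miller--Short), and your sketch supplies no mechanism for it.

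The converse direction is the right idea and is essentially the route taken in the literature (Bieri--Renz criterion for the coabelian kernel, a product formula for $\Sigma$-invariants, and $\Sigma^1(L)=\emptyset$ for non-abelian limit groups), but as written it does not close. Your product formula is mis-indexed: since $\Sigma^j(G_i)^c=S(G_i)$ for every $j\geq 1$ when $G_i$ is a non-abelian limit group, every cell on your right-hand side is the full join $S(G_1)\ast\dots\ast S(G_r)=S(G)$, so your formula would give $\Sigma^k(G_1\times\dots\times G_r)=\emptyset$ and could never certify any finiteness property (it already fails for $F_2\times F_2\times F_2$ in degree $2$, where Stallings' kernel is finitely presented). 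The correct statement (the homological, field-coefficient product formula, which is the version actually available; for the homotopical invariants only one inclusion is known) gives that $[\chi]\in\Sigma^k(G,\mathbb{Q})$ exactly when $\chi$ is nonzero on at least $k+1$ factors. Correspondingly, your counting step is wrong: virtual surjection onto $k$-tuples does not yield that characters vanishing on $H$ are nonzero on at least $r-k+1$ factors; what it yields --- and what is needed --- is that the support of such a character cannot be contained in any $k$-element set of factors (otherwise the character would vanish on a finite-index subgroup of that $k$-fold product and hence be zero), i.e.\ it is nonzero on at least $k+1$ factors. Finally, since the field-coefficient formula only gives $\mathrm{FP}_k(\mathbb{Q})$, you should pass to $\mathcal{F}_k$ via finite presentability (virtual surjection onto pairs and \cite[Theorem A]{BriHowMilSho-13}), as the paper's surrounding discussion indicates; and $\Sigma^1(L)=\emptyset$ for non-abelian limit groups is a theorem that needs citing, not an offhand remark.
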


Note that in Kochloukova's statement of Theorem \ref{thmKochlVSPk} the condition is that $H$ has the homological finiteness type $\rm{FP}_k(\mathds{Q})$. By \cite[Corollary E]{BriHowMilSho-13} this is however equivalent to type $\mathcal{F}_k$ for subgroups of direct products of surface groups. In general we only have that $\mathcal{F}_k$ implies $\rm{FP}_k(\mathds{Q})$ (see for instance \cite[Section 8.2]{Geo-08}).

We shall need the following auxiliary result which is a consequence of Theorem \ref{thmKochlVSPk}.

\begin{lemma}
 Let $G_1,\cdots, G_r$ be groups and $Q$ be a finitely generated abelian group. Let $\phi: G_1 \times \dots \times G_r \rightarrow Q$ be an epimorphism. 
 Assume that the subgroup $H=\ker (\phi) \leq G_1\times \dots \times G_r$ virtually surjects onto $m$-tuples (resp. surjects onto $m$-tuples), then the group $\phi( G_{i_1}\times \cdots \times G_{i_{r-m}})\leq Q$ is a finite index subgroup of $Q$ (resp. equal to $Q$) for all $1\leq i_1 < \dots < i_{r-m}\leq r$.
\label{lemSurjComp}
\end{lemma}

\begin{proof}
 Assume that $H$ virtually surjects onto $m$-tuples. Consider a product $ G_{i_1}\times \dots \times G_{i_{r-m}}$ of $r-m$ factors. We may assume that  $i_j=j$.
 
Let $g\in Q$ be an arbitrary element. By surjectivity of $\phi$ there exist elements $h_1\in G_1\times \cdots \times G_{r-m}$ and $h_2 \in G_{r-m+1}\times \cdots \times G_r$ such that $g=\phi(h_1)\cdot \phi(h_2)$. Since $H$ virtually surjects onto $m$-tuples there is $k\geq 1$ such that $h_2^k\in p_{r-m+1,\cdots,r}(H)$. Hence, there is $\overline{h}_1\in G_{1}\times \cdots \times G_{r-m}$ such that $\overline{h}_1 \cdot h_2^k\in H=\ker (\phi)$. In particular it follows that $\phi(h_2^k)=\phi((\overline{h}_1)^{-1})$. As a consequence we obtain that $g^k = \phi(h_1)^k \cdot \phi(h_2)^k = \phi(h_1)^k\cdot \phi((\overline{h}_1)^{-1})\in \phi(G_{1}\times \cdots \times G_{r-m})$.

We proved that the abelian group $Q/\phi(G_{1}\times \cdots \times G_{r-m})$ has the property that each of its elements is torsion. This implies that $Q/\phi(G_{1}\times \cdots \times G_{r-m})$ is finite and thus $\phi(G_{1}\times \cdots \times G_{r-m})$ is a finite index subgroup of $Q$.

The second part follows immediately, since we can choose $k=1$ in the above proof if H surjects onto $G_{r-m+1}\times \dots \times G_r$.
\end{proof}

\begin{corollary}
 Let $\phi: \G_1 \times \cdots \times \G_r\rightarrow Q$ be an epimorphism, where $\G _1,\cdots,\G_r$ are surface groups and $Q$ is a finitely generated abelian group. If $\ker (\phi)$ is a full subdirect product of type $\mathcal{F}_m$ then the image $\phi(\G_{i_1} \times \cdots \times \G_{i_{r-m}})\leq Q$ is a finite index subgroup of $Q$ for all $1\leq i_1< \cdots < i_{r-m}\leq r$.
 \label{propCoNilpFm}
\end{corollary}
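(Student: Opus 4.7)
The plan is to deduce the corollary by chaining together the two previously established results: Kochloukova's Theorem \ref{thmKochlVSPk} and Lemma \ref{lemSurjComp}. The statement is essentially a direct combination of these, once one verifies that the hypotheses line up correctly.

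First, I would observe that the hypotheses of Theorem \ref{thmKochlVSPk} are satisfied: the groups $\L_1,\dots,\L_r$ are non-abelian limit groups, the subgroup $\ker\phi \leq \L_1\times\dots\times \L_r$ is a full subdirect product, and it has finiteness type $\mathcal{F}_m$. Theorem \ref{thmKochlVSPk} therefore gives (the only implication we need, namely $\mathcal{F}_m \Rightarrow$ VSP on $m$-tuples) that $\ker\phi$ virtually surjects onto $m$-tuples, i.e.\ for every $1\leq j_1<\dots<j_m\leq r$ the projection $p_{j_1,\dots,j_m}(\ker\phi)$ has finite index in $\L_{j_1}\times\dots\times \L_{j_m}$.

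Next, I would apply Lemma \ref{lemSurjComp} with $G_i=\L_i$, $H=\ker\phi$, and the given epimorphism $\phi:\L_1\times\dots\times \L_r\to Q$ onto the finitely generated abelian group $Q$. The conclusion of that lemma is precisely that $\phi(\L_{i_1}\times\dots\times \L_{i_{r-m}})$ is of finite index in $Q$ for every $1\leq i_1<\dots<i_{r-m}\leq r$, which is exactly the statement to be proved.

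There is no real obstacle: the only minor point to check is that the indexing in Lemma \ref{lemSurjComp} matches that of the corollary, namely that if $H$ virtually surjects onto the factors indexed by the complement of $\{i_1,\dots,i_{r-m}\}$ (an $m$-tuple), then the restriction of $\phi$ to the factors indexed by $\{i_1,\dots,i_{r-m}\}$ has finite-index image. This is built into the statement of Lemma \ref{lemSurjComp} (where the roles of the two complementary index sets are symmetric up to reordering factors). Thus the proof is a two-line deduction, and no new argument is required beyond invoking the two cited results in sequence.
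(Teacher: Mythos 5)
Your proposal is correct and is exactly the paper's own argument: the paper proves this corollary as a direct consequence of Theorem \ref{thmKochlVSPk} (giving virtual surjection onto $m$-tuples) combined with Lemma \ref{lemSurjComp}. You have simply spelled out the same two-step deduction in more detail, including the routine check that the hypotheses and the complementary index sets match.
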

\begin{proof}
This is a direct consequence of Lemma \ref{lemSurjComp} and Theorem \ref{thmKochlVSPk}.
\end{proof}

As another consequence of Theorem \ref{thmKochlVSPk}, we obtain: 

\begin{lemma}
 Let $G\leq \G_1\times \dots \times \G_r$ be a full subdirect product of $r$ surface groups $\G_i$ of type $\mathcal{F}_m$ which is virtually a product $H_1\times H_2$. Then, after possibly reordering factors, there is $1\leq s \leq r$ such that $H_1\leq \G_1 \times \dots \times \G_s$, $H_2\leq \G_{s+1}\times \dots \times \G_r$, and one of the following holds:
 \begin{enumerate}
  \item $H_1$ is virtually a direct product of $s$ surface groups; or
  \item $G$ virtually surjects onto at least one $2m$-tuple.
 \end{enumerate}
 \label{lemmNotProdKochl}
\end{lemma}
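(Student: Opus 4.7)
The plan is to proceed in three stages: (i) split the indexing set $\{1,\dots,r\}$ according to the decomposition $H_1\times H_2$, (ii) verify that $H_1$ and $H_2$ are themselves full subdirect products (after possibly shrinking the ambient factors), and (iii) dichotomise on the sizes of the two resulting blocks to obtain either conclusion (1) or conclusion (2).

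First I would split the factors. Let $G_0:=H_1\times H_2$, a finite-index subgroup of $G$. Since elements of $H_1$ commute with elements of $H_2$, for every $i$ the projections $p_i(H_1)$ and $p_i(H_2)$ commute inside $\Lambda_i$. Non-abelian limit groups are CSA (in particular commutative-transitive, with abelian centralisers of non-trivial elements), so if both $p_i(H_1)$ and $p_i(H_2)$ were non-trivial then $p_i(G_0)=p_i(H_1)\cdot p_i(H_2)$ would be abelian; but $p_i(G_0)$ has finite index in $p_i(G)=\Lambda_i$ (because $G$ is subdirect and $G_0$ has finite index in $G$), contradicting non-abelianity of $\Lambda_i$. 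Also $p_i(G_0)$ cannot be trivial, so for each $i$ exactly one of $p_i(H_1), p_i(H_2)$ is non-trivial. Reordering the factors I can put $\{i:p_i(H_1)\neq 1\}=\{1,\dots,s\}$, and then $H_1\leq \Lambda_1\times\cdots\times\Lambda_s$, $H_2\leq\Lambda_{s+1}\times\cdots\times\Lambda_r$. The degenerate cases $s=0$ or $s=r$ force one factor to be trivial, which makes conclusion (1) automatic, so I may assume $1\leq s\leq r-1$.

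Next I would verify the hypotheses needed to apply the structural theorems. Replacing each $\Lambda_i$ by the finite-index subgroup $\Lambda_i':=p_i(H_1)$ for $i\leq s$ (and similarly with $H_2$ for $i>s$), both $H_1$ and $H_2$ become subdirect in their respective products. Fullness follows because any non-trivial element of $G\cap\Lambda_i$ (which exists by fullness of $G$) has a power in $G_0=H_1\times H_2$, and the block decomposition forces that power into $H_j\cap\Lambda_i$ for the appropriate $j$, giving a non-trivial element there (the $\Lambda_i$ are torsion-free). Since $G_0$ is finite-index in $G$ it inherits type $\mathcal{F}_m$, and a direct product of groups is of type $\mathcal{F}_m$ if and only if both factors are; hence $H_1$ and $H_2$ are each full subdirect products of non-abelian limit groups of type $\mathcal{F}_m$.

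Finally I would split into cases. If $s<m$ or $r-s<m$, then one of $H_1,H_2$ is a full subdirect subgroup of type $\mathcal{F}_m$ inside a product of strictly fewer than $m$ non-abelian limit groups; in particular it is of type $\mathcal{F}_s$ (respectively $\mathcal{F}_{r-s}$) in a product of exactly that many factors. By the Bridson--Howie--Miller--Short theorem that any $\mathcal{F}_k$-subgroup of a product of $k$ limit groups is virtually a direct product of limit groups, that $H_j$ is virtually a direct product of limit groups, and hence of type $\mathcal{F}_\infty$ since limit groups are of type $\mathcal{F}$. This is conclusion (1). Otherwise $s\geq m$ and $r-s\geq m$, in which case Theorem \ref{thmKochlVSPk} applied to $H_1$ and $H_2$ separately gives that $H_1$ virtually surjects onto every $m$-tuple of factors drawn from $\{1,\dots,s\}$ and $H_2$ onto every $m$-tuple drawn from $\{s+1,\dots,r\}$. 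Since $G\supseteq G_0=H_1\times H_2$, the group $G$ then virtually surjects onto any $2m$-tuple formed by combining such an $m$-tuple on each side, giving conclusion (2). The main obstacle is the factor-splitting in the first step: this is where the non-abelianity of the $\Lambda_i$ and the CSA property of limit groups are essential, and without them the whole dichotomy would break down.
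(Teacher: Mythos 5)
Your argument is correct, and its first step (the block decomposition via commuting projections, commutative transitivity/CSA, and non-abelianity of the $\L_i$) is exactly the paper's, which invokes the structure of centralizers in limit groups together with subdirectness for the same purpose. Where you diverge is in how the two horns of the dichotomy are obtained. The paper applies Kochloukova's Theorem (Theorem \ref{thmKochlVSPk}) \emph{once}, to $G$ itself: since $G$ is of type $\mathcal{F}_m$ it virtually surjects onto $m$-tuples; if $s\leq m$ the projection of $H_1\times H_2$ onto $\L_1\times\dots\times\L_s$ is then a finite-index subgroup and equals $H_1$, so $H_1$ is of type $\mathcal{F}_\infty$; if $s,r-s>m$ one simply factors the projections onto $m$-tuples inside each block through the block projections and concatenates to get a $2m$-tuple. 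You instead re-establish the hypotheses for $H_1$ and $H_2$ separately (full, subdirect after shrinking the factors to $p_i(H_j)$, and of type $\mathcal{F}_m$) and then apply Kochloukova's theorem to each of them in case (2), and the Bridson--Howie--Miller--Short structure theorem for subgroups of products of limit groups in case (1). This is a legitimate alternative and yields the slightly stronger intermediate statement that $H_1$ and $H_2$ are themselves full subdirect products of type $\mathcal{F}_m$, but it is costlier: it needs the fact that type $\mathcal{F}_m$ passes to direct factors (equivalently to retracts), which you assert without justification; this is true (e.g.\ it follows from Alonso's theorem that finiteness properties are inherited by (quasi-)retracts, or from standard homological arguments for $\mathrm{FP}_m$ plus retract-inheritance of finite presentability), but it is not immediate from the definition and should be cited, and it also imports \cite[Theorem A]{BriHowMilSho-09}, neither of which the paper's shorter argument requires. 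The discrepancy in thresholds ($s<m$ versus the paper's $s\leq m$ for case (1)) is harmless, since the lemma's conclusion is a disjunction and your case (2) covers $s=m$ correctly.
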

\begin{proof}
 Centralizers of non-trivial elements in surface groups are infinite cyclic. Hence, subdirectness of $G$ implies that, after reordering the $\G_i$, there is $1\leq s \leq r$ such that $H_1\leq \G_1 \times \dots \times \G_s$ and $H_2\leq \G_{s+1}\times \dots \times \G_r$. Theorem \ref{thmKochlVSPk} implies that $G$ and thus $H_1\times H_2$ virtually surjects onto $m$-tuples in $\G_1 \times \dots \times \G_r$. 
 
 If either $s\leq m$ or $r-s\leq m$, say $s\leq m$, then $H_1 \times H_2$ surjects onto a finite index subgroup of $\G_1 \times \dots \times \G_s$. However, this projection is isomorphic to $H_1$. Thus, $H_1$ is a finite index subgroup of $\G_1 \times \dots \times \G_s$ and therefore $H_1$ is virtually a direct product of $s$ surface groups. If $r-s\leq m$, then a further reordering of factors allows us to exchange the roles of $H_1$ and $H_2$ (and $s$ and $r-s$), thus leading to the same conclusion.
 
 Now assume that $s, ~ r-m > m$. In this case $H_1 \times H_2 $ virtually surjects onto $m$-tuples in $\G_1 \times \dots \times \G_s$ (resp. $\G_{s+1}\times \dots \times \G_{r}$). Factoring the projection onto such an $m$-tuple through the projection onto $\G_1 \times \dots \times \G_s$ (resp. $\G_{s+1}\times \dots \times \G_r$), we see that $H_1\leq \G_1 \times \dots \times \G_s$ (resp. $H_2\leq \G_{s+1}\times \dots \times \G_r$) also virtually surjects onto $m$-tuples. Thus, $H_1\times H_2$ virtually surjects onto at least one $2m$-tuple and the same holds for $G$. 
\end{proof}

\begin{remark}
Note that Lemma \ref{lemmNotProdKochl} can also be applied to full subgroups $G\leq \G_1\times \dots \times \G_r$, after replacing the $\G_i$ by the projections of $G$ to $\G_i$ and $G$ by the quotient $G/Z(G)$ by the center $Z(G)$, since $G/Z(G)$ and $G$ have the same finiteness type by \cite[Proposition 2.7]{Bie-81}.
\end{remark}

We shall also need the following result by Kuckuck.

\begin{proposition}[{\cite[Corollary 3.6]{Kuc-14}}]
 Let $G\leq \G_1 \times \cdots \times \G_r$ be a full subdirect product of a direct product of $r$ surface groups $\G_i$, $1\leq i \leq r$. If $G$ virtually surjects onto $m$ tuples for $m> \frac{r}{2}$ then $G$ is virtually coabelian. In particular, $G$ is virtually coabelian if $G$ is of type $\mathcal{F}_m$. 
 
 More precisely, we have that in either case there exist finite index subgroups $\S_i\leq \G_i$, a free abelian group $A$ and a homomorphism
 \[
 \phi: \S_1\times \cdots \times \S_r \rightarrow A
 \]
such that $\ker (\phi)\leq G$ is a finite index subgroup.
\label{thmCoNilpFm}
\end{proposition}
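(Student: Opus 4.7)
The ``In particular'' clause is immediate from Theorem \ref{thmKochlVSPk}: type $\mathcal{F}_m$ implies virtual surjection onto $m$-tuples. I would therefore focus on the main assertion and follow the strategy that generalises the three-factor case treated by Bridson--Howie--Miller--Short.

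\emph{Step 1 (Reduction to genuine surjection).} Pass to finite-index subgroups $\L_i'\leq\L_i$ and $G'\leq G\cap(\L_1'\times\cdots\times\L_r')$ such that $G'$ actually surjects onto $\prod_{i\in I}\L_i'$ for every $m$-subset $I\subseteq\{1,\ldots,r\}$. This uses only that VSP onto $m$-tuples passes to any smaller $k$-tuple ($k\leq m$), and absorbs all virtual issues up front. Both full subdirectness and the VSP hypothesis are preserved.

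\emph{Step 2 (Combinatorial/algebraic core).} The hypothesis $m>r/2$ is equivalent to $2m>r$, which forces any two $m$-subsets of $\{1,\ldots,r\}$ to intersect. I would use this to prove the key claim: \emph{for every $i$ the coordinate intersection $L_i:=G'\cap\L_i'$ contains the derived subgroup $[\L_i',\L_i']$.} Given $x,y\in\L_i'$, choose an $m$-subset $I\ni i$ and, via the surjection $p_I\colon G'\twoheadrightarrow\prod_{j\in I}\L_j'$, lift $x,y$ to $\tilde x,\tilde y\in G'$. The aim is to tune the lifts so that $[\tilde x,\tilde y]$ is trivial in every coordinate $j\neq i$. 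On $I\setminus\{i\}$ one can directly pick lifts with commuting $j$-coordinates (the kernel of $p_I$ inside $G'$ surjects onto each such factor by the $m$-tuple surjection applied to $m$-subsets containing $j$ but not $i$). For $k\notin I$ one invokes another $m$-subset $I'$ containing both $i$ and $k$---which exists precisely by the intersecting property---and adjusts the $k$-coordinates; here one appeals to the fact that non-trivial centralisers in non-abelian limit groups are infinite cyclic, so matching coordinates up to commuting correction terms can be done. The resulting commutator $[\tilde x,\tilde y]$ lies in $G'\cap\L_i'=L_i$ and projects to $[x,y]$, proving the claim.

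\emph{Step 3 (Assembly of $\phi$).} Let $A_i$ be the torsion-free quotient of the abelianisation of $\L_i'$, and define
\[
\phi\colon\L_1'\times\cdots\times\L_r'\longrightarrow A:=A_1\oplus\cdots\oplus A_r
\]
as the coordinate-wise abelianisation. After possibly replacing each $\L_i'$ by a further finite-index subgroup to kill abelianisation torsion, one has $\ker\phi=\prod_{i=1}^r[\L_i',\L_i']$, and Step 2 gives $\ker\phi\leq G'\leq G$. A comparison of indices (the finite index of $\phi(G')$ in $A$ against the finite index of $G'$ in $\prod_i\L_i'$) shows $\ker\phi$ has finite index in $G$, yielding the required epimorphism onto a free abelian group.

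The main obstacle is Step 2: while the combinatorial input $2m>r$ provides considerable freedom, choosing the lifts $\tilde x,\tilde y$ so that their commutator is trivial on every coordinate outside $i$---not merely of finite order, nor merely lying in $L_i$ up to some slack---requires a simultaneous adjustment of all $r$ coordinates and is exactly where the structure theory of limit groups (cyclicity of centralisers, together with the BHMS description of subdirect products) must be invoked in an essential way.
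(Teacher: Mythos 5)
The paper offers no argument of its own here (the statement is quoted from Kuckuck, \cite[Corollary 3.6]{Kuc-14}), so your proposal has to stand on its own, and it does not: the heart of the matter, your Step 2, is not actually proved, as you concede in your closing paragraph. Moreover, the route you sketch there is off-target. The relevant consequence of $m>\frac{r}{2}$ is not that two $m$-subsets must intersect, but that for each $i$ one can choose two $m$-subsets $I,J$ with $i\in I\cap J$ and $I\cup J=\{1,\dots,r\}$ (possible precisely because $2m-1\geq r$). Lift $x$ using $I$: since $p_I(G)$ contains a product of finite-index subgroups $\prod_{j\in I}\L_j^{(I)}$, for $x$ in a suitable finite-index subgroup $\L_i'\leq\L_i$ there is $\tilde x\in G$ whose $i$-coordinate is $x$ and whose coordinates on $I\setminus\{i\}$ are trivial; lift $y$ the same way using $J$. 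Then for every $j\neq i$ at least one of $\tilde x_j,\tilde y_j$ is trivial, so $[\tilde x,\tilde y]=([x,y],1,\dots,1)\in G\cap\L_i$ with no ``tuning'' whatsoever and no appeal to cyclic centralisers or the BHMS structure theory; lifting both $x$ and $y$ through the same $I$ and then adjusting the outside coordinates, as you propose, is exactly the version of the argument that does not close up. (A smaller point: in Step 1 you cannot in general arrange that $G'$ genuinely surjects onto $\prod_{j\in I}\L_j'$ for all $I$ simultaneously; what you can, and only need, arrange is that each $p_I(G)$ contains a product of finite-index subgroups, which is what the lifting above uses.)

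Step 3 is also wrong as written. Taking $\phi$ to be the coordinatewise abelianisation makes $\ker\phi$ commensurable with $\prod_i[\L_i',\L_i']$, which has \emph{infinite} index in $G$ except in the degenerate case where $G$ is virtually the product of derived subgroups; already for $G=\ker(\G_{g_1}\times\G_{g_2}\times\G_{g_3}\to\ZZ^2)$ your $\ker\phi$ has infinite index in $G$. The parenthetical ``finite index of $G'$ in $\prod_i\L_i'$'' is false whenever the coabelian rank is positive --- that is the whole content of the statement --- and ``killing abelianisation torsion by passing to finite-index subgroups'' is neither clearly possible for limit groups nor the real issue. The correct assembly is: with $P=\prod_i[\L_i',\L_i']\leq G$, the group $G_1:=G\cap\prod_i\L_i'$ is the full $q$-preimage of $B:=q(G_1)$, where $q:\prod_i\L_i'\to\prod_i(\L_i')_{\mathrm{ab}}$; one must quotient further by $B$, and the only remaining work is to shrink the $\L_i'$ (for instance, replace $\L_i'$ by the preimage of $e\cdot L_i$ for $L_i$ a free finite-index subgroup of $(\L_i')_{\mathrm{ab}}$ and $e$ a multiple of the exponent of $\widehat{B}/B$, where $\widehat{B}$ is the purification of $B$) so that $B$ meets the new product of abelianisations in a pure subgroup; then the quotient is genuinely free abelian and the kernel of the resulting $\phi$ is $G\cap\prod_i\L_i''$, a finite-index subgroup of $G$ contained in $G$. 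Your ``In particular'' reduction via Theorem \ref{thmKochlVSPk} is the only part that is complete as stated.
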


We will require the following consequence of Theorem \ref{thmKochlVSPk} and Proposition \ref{thmCoNilpFm}:
\begin{corollary}
\label{corFinPropsProjFactors}
 Let $r\geq 1$ and let $G\leq \G_1 \times \cdots \times \G_r$ be a full subdirect product of surface groups $\G_i$, $1\leq i \leq r$. Assume that $G$ is of type $\mathcal{F}_m$ with $m\geq 0$. For $k\geq 0$ with $m>\frac{k}{2}$ and $1\leq i_1 < \dots < i_k\leq r$ the projection $p_{i_1,\cdots,i_k}(G)\leq \G_{i_1}\times \dots \times \G_{i_k}$ is of type $\mathcal{F}_m$.
\end{corollary}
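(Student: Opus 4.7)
My plan is to apply Theorem \ref{thmKochlVSPk} together with Proposition \ref{thmCoNilpFm} to the projection $\bar{G}:=p_{i_1,\dots,i_k}(G)\leq \Lambda_{i_1}\times\dots\times\Lambda_{i_k}$. The first task is to verify that $\bar{G}$ is itself a full subdirect product of non-abelian limit groups, so that the quoted results are available for it: subdirectness is inherited directly from $G$, while fullness follows because for each $j\in\{i_1,\dots,i_k\}$ the intersection $G\cap \Lambda_j$ is non-trivial by fullness of $G$, and the restriction of $p_{i_1,\dots,i_k}$ to the factor $\Lambda_j$ is the identity, so this intersection embeds non-trivially into $\bar{G}\cap \Lambda_j$.

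Next I would transfer the virtual surjection property from $G$ to $\bar{G}$. By Theorem \ref{thmKochlVSPk}, since $G$ is of type $\mathcal{F}_m$, it virtually surjects onto $m$-tuples in $\Lambda_1\times\dots\times\Lambda_r$; and the property of virtually surjecting onto $m$-tuples is inherited by further projection, so $G$ virtually surjects onto $j$-tuples for every $j\leq m$. In particular, for any $j\leq \min(m,k)$ and any sub-tuple $\{j_1,\dots,j_j\}\subseteq \{i_1,\dots,i_k\}$, the group $p_{j_1,\dots,j_j}(G)=p_{j_1,\dots,j_j}(\bar{G})$ has finite index in $\Lambda_{j_1}\times\dots\times\Lambda_{j_j}$. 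Thus $\bar{G}$ virtually surjects onto $j$-tuples for all $j\leq \min(m,k)$.

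The argument then splits according to the sign of $k-m$. If $k\leq m$, then $\bar{G}$ virtually surjects onto the (unique) $k$-tuple $\{i_1,\dots,i_k\}$, which means $\bar{G}$ is of finite index in $\Lambda_{i_1}\times\dots\times\Lambda_{i_k}$; since non-abelian limit groups admit finite classifying spaces, this product is of type $\mathcal{F}_\infty$ and so is $\bar{G}$, giving type $\mathcal{F}_m$ a fortiori. If instead $k>m$, the hypothesis $m>k/2$ combined with the VSP on $m$-tuples established above permits me to invoke Proposition \ref{thmCoNilpFm} for $\bar{G}$, yielding that $\bar{G}$ is virtually coabelian. The converse direction of Theorem \ref{thmKochlVSPk} then applies to $\bar{G}$ and shows that it is of type $\mathcal{F}_m$.

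I do not anticipate any serious technical obstacle, since every step is an assembly of results already stated in the paper. The only point that requires a little care is checking that the case split correctly covers the single hypothesis $m>k/2$ (rather than the stronger $m>k$), and that fullness is preserved under projection; both boil down to the elementary observation that $p_{i_1,\dots,i_k}$ restricted to the factor $\Lambda_{i_j}$ is the identity.
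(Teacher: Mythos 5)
Your proof is correct and follows essentially the same route as the paper: transfer the virtual surjection onto $m$-tuples from $G$ to the projection via Theorem \ref{thmKochlVSPk}, check that the projection remains full subdirect, apply Proposition \ref{thmCoNilpFm} to get virtual coabelianity, and conclude with the converse direction of Theorem \ref{thmKochlVSPk}. Your explicit case split $k\leq m$ versus $k>m$ is a slightly more careful handling of a point the paper's proof passes over silently, but it does not change the substance of the argument.
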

\begin{proof}
 By Theorem \ref{thmKochlVSPk} the group $G\leq \G_1 \times \cdots \times \G_r$ virtually projects onto $m$-tuples. Hence, the projection $\overline{G}:=p_{i_1,\cdots,i_k}(G)\leq \G_{i_1}\times \cdots \times \G_{i_k}$ is full subdirect and virtually surjects onto $m$-tuples with $m> \frac{k}{2}$. By Proposition \ref{thmCoNilpFm} $\overline{G}$ is virtually coabelian in $\G_{i_1}\times \cdots \times \G_{i_k}$. Hence, the subgroup $\overline{G}\leq \G_{i_1}\times \cdots \times \G_{i_k}$ is full subdirect, virtually coabelian, and virtually projects onto $m$-tuples. The converse direction of Theorem \ref{thmKochlVSPk} then implies that $\overline{G}$ is of type $\mathcal{F}_m$.
\end{proof}

As a consequence of the results in this section we can determine the precise finiteness properties and irreducibility of the groups arising from our construction in Theorem \ref{thmExsGenClass}.
\begin{lemma}
 Under the assumptions of Theorem \ref{thmExsGenClass} and with the same notation, let  $\phi=h_{\ast}: \pi_1 (S_{1})\times \cdots \times \pi_1 (S_{r})\rightarrow \pi_1 (E^{\times k})$ be the induced epimorphism on fundamental groups. 
 
 Then $\ker (\phi) \cong \pi_1 (\overline{H})$ is irreducible of type $\mathcal{F}_{r-k}$ and not of type $\mathcal{F}_{r-k+1}$. Moreover, there is no subvariety $X\stackrel{\iota}{\hookrightarrow}R_1 \times \dots \times R_r$ of dimension $\geq r-k+1$ in a direct product of $r$ Riemann surfaces $R_i$ of genus $\geq 2$ such that $\iota_{\ast}(\pi_1 (X))\cong\pi_1 (\overline{H})$.
\label{lemNotProdNotFr}
\end{lemma}
\begin{proof}
 Since $\overline{H}$ is compact with $\pi_i(\overline{H})=\left\{1\right\}$ for $2\leq i \leq r-k$, the group $\pi_1 (\overline{H})$ admits a classifying space with finitely many cells in dimension less than or equal to $r-k$. Thus, $\pi_1(\overline{H})$ is of type $\mathcal{F}_{r-k}$. 
 
 As in the proof of Theorem \ref{thmExtendedRange} we have
 \[
  \phi(g_1,\cdots,g_r)=\sum_{i=1}^r v_i\cdot \alpha_i(g_i) \in (\pi_1( E))^{\times k}\cong (\ZZ^2)^{k}\cong \ZZ^{2k}
 \]
 for $(g_1,\cdots,g_r)\in \pi_1 (S_{1})\times \cdots \times \pi_1 (S_{r})$.
 
 Since the maps $\alpha_i$ are finite sheeted branched coverings, the image $\alpha_{i,\ast}(\pi_1(S_{i}))\leq \pi_1 (E)$ is a finite index subgroup for $1\leq i \leq r$. The assumption that the $v_i$ satisfy property (P') implies that the image $\phi(\pi_1 (S_{i_1})\times \cdots \times \pi_1 (S_{i_k}))\leq \pi_1 (E^{\times k})$ of any $k$ factors is a finite index subgroup of $\pi_1 (E^{\times k})\cong \ZZ^{2k}$, $1\leq i_1 < \cdots < i_k\leq r$.

 Since we have $r\geq k+2$ factors and any $k$ factors map to a finite index subgroup of $\pi_1 (E^{\times k})$, the kernel of 
\[ 
 \phi_0=\phi|_{\S_1\times \cdots \times \S_r}: \S_1 \times \cdots \times \S_r\rightarrow \pi_1 (E^{\times k})
\]
is subdirect, after passing to finite index subgroups $\S_i\leq \pi_1 (S_{\g_i})$. Note that the image $\rm{Im} (\phi_0)\leq \pi_1 (E^{\times k})$ is a finite index subgroup, thus isomorphic to $\ZZ^{2k}$, and that $\ker (\phi_0) \leq \ker (\phi)$ is a finite index subgroup. The intersection $\S_i \cap \ker (\phi_0)\unlhd \S_i$ is a non-trivial normal subgroup of infinite index in $\S_i$, since $\phi(\S_i)\cong \ZZ^2$. Thus, $\ker (\phi_0)$ is a full subdirect product of $\S_1 \times \cdots \times \S_r$.
 
 Since the image of the restriction of $\phi_0$ to any factor $\S_i$ is isomorphic to $\ZZ^2$, the image of the restriction of $\phi$ to any $k-1$ factors $\S_{i_1}\times \cdots \times \S_{i_{k-1}}$ ($1\leq i_1 < \cdots < i_{k-1}$) is isomorphic to $\ZZ^{2(k-1)}$ (by the same argument as for $k$ factors). In particular, $\phi(\S_{i_1}\times \cdots \times \S_{i_{k-1}})$ is not a finite index subgroup of ${\rm{Im}} (\phi_0)\cong \ZZ^{2k}$. By Corollary \ref{propCoNilpFm}, $\ker (\phi_0)$ and therefore its finite extension $\ker (\phi) \geq \ker (\phi_0)$ cannot be of type $\mathcal{F}_{r-k+1}$. 
 
 Assume that there is a finite index subgroup $H_1 \times H_2 \leq \ker (\phi)$ which is a product of two non-trivial groups $H_1$ and $H_2$. By Lemma \ref{lemmNotProdKochl} we may assume that (after reordering factors) $H_1 \leq \pi_1 (S_{1}) \times \cdots \times \pi_1 (S_{s})$, $H_2 \leq \pi_1 (S_{s+1})\times \cdots \pi_1 (S_{r})$, for some $1\leq s \leq r-1$. Assume that $H_1$ (or $H_2$) is virtually a direct product of surface subgroups $\Lambda_i \leq \pi_1 (S_{i})$, $1\leq i \leq s$. In particular, the $\Lambda_i$ must be finite index subgroups of the $\pi_1 (S_{i})$. This contradicts that the restriction of $\phi$ to any finite index subgroup of $\pi_1 (S_{i})$ has infinite image. Hence, by Lemma \ref{lemmNotProdKochl}, $\ker (\phi)$ virtually surjects onto at least one $2(r-k)$-tuple. However, the genericity condition (P') satisfied by $\mathcal{C}$ implies that $\ker (\phi)$ does not virtually surject onto any $(r-k+1)$-tuple (the argument is the same as in the proof of Lemma \ref{lemSurjComp}). In particular, $\ker (\phi)$ is irreducible.
 
 Finally, assume that there are closed Riemann surfaces $R_i$ and a smooth subvariety $X\stackrel{\iota}{\hookrightarrow} R_1\times \dots \times R_r$ of dimension $\geq r-k+1$ with $\iota_{\ast}(\pi_1(X))\cong\pi_1 (\overline{H})$. The group $\iota_{\ast}(\pi_1(X))\leq \pi_1(R_1)\times \dots \times \pi_1(R_r)$ is full, since by assumption it must contain $\ZZ^r$ as a subgroup. After replacing the $R_i$ by finite covers, we may also assume that $\iota_{\ast}(\pi_1(X))$ is a subdirect product. Thus, \cite[Theorem C(3)]{BriHowMilSho-13} implies that, after possibly reordering factors, the isomorphism $\iota_{\ast}(\pi_1(X))\cong\pi_1 (\overline{H})$ is induced by an isomorphism $\nu: \pi_1(R_1)\times \dots \times \pi_1(R_r)\to \pi_1(S_1)\times \dots \times \pi_1(S_r)$ with $\nu(\pi_1(R_i))=\pi_1(S_i)$. In particular, we deduce, from the above observation that $\pi_1(\overline{H})$ does not virtually surject onto any $(r-k+1)$-tuple, that the same holds for $\iota_{\ast}(\pi_1 (X))$.
 
 On the other hand the fact that $X$ has dimension at least $r-k+1$ implies that the projection of $X$ to at least one $(r-k+1)$-tuple of direct factors must be surjective. After possibly reordering factors we may assume that $\iota(X)$ surjects onto $R_1\times \dots \times R_{r-k+1}$. It follows that $p_{1,\dots,r-k+1}(\iota_{\ast}(\pi_1(X))) \leq \pi_1(R_1)\times \dots \times \pi_1(R_{r-k+1})$ is a finite index subgroup, a contradiction. This completes the proof.
\end{proof}

\section{Restrictions on coabelian K\"ahler groups}
\label{secResCoabKGs}

All non-trivial examples of K\"ahler subgroups of direct products of surface groups constructed so far, are obtained as kernels of maps from a direct product of surface groups to a free abelian group. Hence, a natural special case of Delzant and Gromov's question is the following question.

\begin{question}
 Let $S_{1},\dots, S_{r}$ be closed hyperbolic Riemann surfaces, let $k\in \ZZ$ and let $\phi: \pi_1 (S_{1})\times\dots \times \pi_1 (S_{r})\to \ZZ^k$ be an epimorphism. When is $\ker (\phi)$ a K\"ahler group?
\end{question}

We will show that for $\ker (\phi)$ to be K\"ahler, $k$ must be even. This will follow from a study of the more general question of finding constraints on homomorphisms
\[
 \psi: G \rightarrow \pi_1 (S_{1}) \times \dots \times \pi_1 (S_{k})
\]
from a K\"ahler group $G$ to a direct product of surface groups.

For a compact K\"ahler manifold $X$ we denote by $a_X: X\to A(X)$ the Albanese map to its Albanese variety $A(X)$. By the universal property of the Albanese variety, every holomorphic map $f: X\to Y$ between compact K\"ahler manifolds $X$ and $Y$ induces a commutative diagram of holomorphic maps
\[
\xymatrix{ X\ar[r]^{f}\ar[d]_{a_X}& Y\ar[d]^{a_Y}\\ A(X) \ar[r]^{f_{\mathrm{alb}}}& A(Y).}
\]

The following result shows that this commutative diagram of holomorphic maps provides strong restrictions on coabelian subgroups of K\"ahler groups which are the image of a holomorphic map.

\begin{lemma}
Let $X,Y$ be compact K\"ahler manifolds and let $f:X\rightarrow Y$ be a holomorphic map. Then the images, kernels and cokernels of the induced maps
\[
 f_{\ast} : H_1(\pi_1 (X),\ZZ)=(\pi_1 (X))_{ab} \rightarrow H_1(\pi_1 (Y), \ZZ)=(\pi_1 (Y))_{ab},
\]
\[
f^{\ast}: H^1(\pi_1 (X),\ZZ) \rightarrow H^1(\pi_1(Y),\ZZ)
\]
have even rank.

If, moreover, $f_{\ast}(\pi_1 (X)) \leq \pi_1 (Y)$ is the kernel of an epimorphism $\psi: \pi_1 (Y) \to \ZZ^l$ then $f_{\ast}(\pi_1(X))$ is holomorphically coabelian. More precisely, $f_{\ast}(\pi_1(X))$ is the kernel of the homomorphism $h_{\ast}:\pi_1(Y)\to \pi_1(B)=\ZZ^l$ induced by the holomorphic map $h: Y\to A(Y)/f_{\mathrm{alb}}(A_X)=:B$. 
\label{lemHodgeHol}
\end{lemma}

\begin{proof}
The first part is an easy consequence of the observation that
\begin{equation}
f_{\mathrm{alb},\ast}=f_{\ast,tf}: H_1(A(X),\ZZ)=H_1(X,\ZZ)_{tf} \to H_1(X,\ZZ)_{tf}=H_1(A(Y),\ZZ),
\label{eqnAlbHom}
\end{equation}
where for a finitely generated abelian group $A$ we denote by $A_{tf}=A/\left\{{\mathrm{torsion}}\right\}$ its maximal torsion-free quotient (see also \cite[Lemma 2.1]{Ara-11}).

Now assume that $G:=f_{\ast}(\pi_1 (X)) =\ker (\psi)$ for an epimorphism $\psi: \pi_1 (Y)\to \ZZ^l$. Then we obtain an exact sequence
 \[
    \pi_1(X) \stackrel{f_{\ast}}{\rightarrow} \pi_1 (Y) \stackrel{\psi}{\rightarrow} \ZZ^l\rightarrow 1.
 \]
By right exactness of abelianization, this induces an exact sequence
 \[
  (\pi_1(X))_{ab}\stackrel{f_{\ast}}{\rightarrow} (\pi_1(Y))_{ab}\rightarrow \ZZ^l \rightarrow 1.
 \]
Since $\ZZ^l$ is torsion-free, this sequence induces an exact sequence on maximal torsion-free quotients which forms the lower row of the following commutative diagram
\begin{equation}
 \xymatrix{\pi_1(X) \ar[r]^{f_{\ast}}\ar[d]& \pi_1 (Y) \ar[r]^{\psi}\ar[d]& \ZZ^l\ar[r]\ar[d]^{=}&1\\
 (\pi_1(X))_{ab,tf}\ar[r]^{f_{\ast,tf}}& (\pi_1(Y))_{ab,tf}\ar[r]& \ZZ^l \ar[r]& 1.}
\label{eqnNewCoab2}
\end{equation}
It follows that $\ZZ^l$ is the cokernel of $f_{\ast,tf}=f_{\mathrm{alb},\ast}$. 
 
On the other hand we obtain a commutative diagram of holomorphic maps
\[
\xymatrix{ X\ar[r]^{f}\ar[d]_{a_X}& Y\ar[d]^{a_Y}\ar[rd]^h\\ A(X) \ar[r]^{f_{\mathrm{alb}}}& A(Y)\ar[r]& A(Y)/f_{\mathrm{alb}}(A(X))=B,}
\]
denoting by $h$ the induced diagonal map. Combining this with \eqref{eqnAlbHom} and \eqref{eqnNewCoab2}, we deduce that there is a commutative diagram
\begin{equation}
\xymatrix{ \pi_1 (X) \ar[r]^{f_{\ast}}\ar[d]& \pi_1(Y)\ar[r]\ar[d]\ar[rd]& \ZZ^l \ar[r]\ar[d]& 1\\
\pi_1(A(X))\ar[r]^{f_{{\mathrm{alb}},\ast}}& \pi_1(A(Y))\ar[r]\ar[ur]& \pi_1\left(B\right),&}
\label{eqnNewCoab3}
\end{equation} 
where all vertical and diagonal maps are epimorphisms. 

For dimension reasons it follows from the short exact sequences in \eqref{eqnNewCoab2} and the definition of $B$ that $l=2\cdot {\mathrm{dim}}_{\C}(B)$. We deduce that the vertical homomorphism $\ZZ^l \to \pi_1(B)$ in \eqref{eqnNewCoab3} is an isomorphism and thus the result follows.
\end{proof}

Lemma \ref{lemHodgeHol} allows us to strengthen our results with respect to a previous version of this work; it is based on a suggestion by the referee, for which we are very grateful. Before proceeding to state and prove the main result of this section, we want to recall some results from the literature that we shall need.

For $g\geq 0$ and $\mm=(m_1,\dots,m_s)$, we denote by $S_{g,\mm}$ the closed orbisurface of genus $g$ with $s\geq 0$ cone points $D=\left\{p_1,\dots,p_s\right\}$ of multiplicities $m_i\geq 2$. Its \textit{orbifold fundamental group} is the group
\[
 \G_{g,\mm}:=\pi_1^{orb} (S_{g,\mm}) = \pi_1 (S_g \setminus D)/ \left\langle\left\langle \gamma_i ^{m_i} \mid i=1,\dots,s \right\rangle\right\rangle
\]
for $\gamma_i$, $1\leq i \leq r$, a loop bounding a small disc around $p_i$. We say that $S_{g,\mm}$ is equipped with a complex structure if the underlying surface $S_g$ is equipped with a complex structure; in this case we call $S_{g,\mm}$ a \textit{Riemann orbisurface}. For a complex manifold $X$, a map $f: X\rightarrow S_{g,\mm}$ is called holomorphic if for each $p_i\in D$ there is a disc neighbourhood $U_i$ in which $f$ factors through a holomorphic map to the $m_i$-fold branched cover of $U_i$ in $p_i$.

Throughout the remainder of this paper all orbisurfaces will be assumed to be closed and hyperbolic. We will use this assumption without further mention and we will refer to the orbifold fundamental group of a closed hyperbolic orbisurface as an \textit{orbisurface group}.

\begin{theorem}
 Let $X$ be a compact K\"ahler manifold, let $G=\pi_1 (X)$, and let $\phi: G \rightarrow \pi_1^{orb} (S_{g,\mm})$ be an epimorphism. Then $\phi$ factors through an epimorphism $\psi: G\rightarrow \pi_1^{orb} (S_{h,\nn})$ with finitely generated kernel, which is induced by a holomorphic map $X \to S_{h,\nn}$.
 
 Moreover, $\phi$ has finitely generated kernel if and only if $\phi$ is induced by a holomorphic map $f: X \to S_{g,\mm}$ with connected fibres (for a suitable complex structure on $S_{g,\mm}$), such that the critical values of $f$ are the cone points $p_i$ and the multiplicity of the singular fibre over $p_i$ is $m_i$.
 \label{lemSiuBeauCat}
\end{theorem}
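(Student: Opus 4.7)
My plan is to prove the theorem in two halves, tackling the ``moreover'' biconditional first and then deducing the factorization statement from it.

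For the easy direction of the biconditional (a holomorphic fibration with matching orbifold data produces an epimorphism with finitely generated kernel), I would argue in the spirit of the proof of Theorem \ref{thmFiltVerGen}. The Ehresmann fibration theorem applied over the smooth locus $S_g \setminus D$ shows that the restriction of $f$ to $f^{-1}(S_g\setminus D)$ is a locally trivial fibration with connected fibre $F$, inducing a surjection onto $\pi_1(S_g \setminus D)$. A local computation around each critical value $p_i$ shows that a meridian of $p_i$ lifts via $f$ to an $m_i$-th power of a loop in $X$ (precisely because the multiplicity of the singular fibre there is $m_i$), so the induced homomorphism descends to a surjection onto the orbifold quotient $\pi_1 S_{g,\mm}$. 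Finite generation of the kernel follows since $\pi_1 F$ is finitely generated (as $F$ is compact) and only finitely many additional classes arise from vanishing cycles at the finitely many critical fibres.

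For the hard direction, I would invoke the Siu--Beauville--Catanese theorem. First, by Selberg's lemma, choose a torsion-free finite-index subgroup $\Gamma \leq \pi_1 S_{g,\mm}$ corresponding to a smooth Riemann surface cover $S_h \to S_{g,\mm}$, and pull back to the cover $X' \to X$ corresponding to $\phi^{-1}(\Gamma)$, obtaining a surjection $\phi': \pi_1 X' \twoheadrightarrow \pi_1 S_h$ with finitely generated kernel. Siu's rigidity theorem yields an equivariant pluriharmonic map from $\widetilde{X'}$ to the universal cover of $S_h$, and the Siu--Sampson Bochner argument forces this map to be holomorphic, descending to a holomorphic $f': X' \to S_h$ inducing $\phi'$. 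An equivariance argument under the deck group $\pi_1 S_{g,\mm}/\Gamma$ descends $f'$ to a holomorphic orbifold map $f: X \to S_{g,\mm}$, and verifying that the multiplicities of the singular fibres of $f$ equal $\mm$ reduces to a local comparison of $\phi$ and $f_*$ on meridians around each $p_i$.

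For the factoring assertion, given an arbitrary epimorphism $\phi: G \to \pi_1 S_{g,\mm}$, I would produce the dominating holomorphic orbifold map by a maximality argument. Consider the collection of pairs $(T,f)$ where $T$ is a hyperbolic orbisurface and $f: X \to T$ is a surjective holomorphic map with connected fibres such that $\phi$ factors as $\phi = q \circ f_*$ for some orbisurface quotient $q: \pi_1 T \to \pi_1 S_{g,\mm}$. By the Arapura--Delzant--Catanese theorem on the finiteness of orbifold pencils of each hyperbolic type on a compact K\"ahler manifold, this poset has a maximal element $(S_{h,\nn},f)$; the previously established biconditional then shows that $\psi := f_*$ has finitely generated kernel and is induced by the holomorphic map $f$, completing the proof.

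The main obstacle will be the orbifold bookkeeping in the maximality step: one must verify that the cone-point multiplicities $\nn$ of the maximal factor divide the multiplicities $\mm$ at their image points, so that the $m_i$-th-power relations in the target $\pi_1 S_{g,\mm}$ are consequences of the $n_j$-th-power relations in $\pi_1 S_{h,\nn}$ and the quotient map $q$ is an honest orbisurface homomorphism. This is the substantive content sitting above the Siu--Beauville--Catanese theory and requires a careful local analysis of how $f$ resolves the orbifold structure of $\phi$ at each cone point.
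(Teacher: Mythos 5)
You should know at the outset that the paper contains no proof of Theorem \ref{lemSiuBeauCat}: it is imported from the literature (the surface-group case from Siu and Beauville, this orbifold version from Delzant, with an earlier explicit statement by Catanese), so your sketch has to stand on its own against those arguments. Your general strategy --- reduce the orbifold case to the surface case by passing to a torsion-free finite-index subgroup, then track multiple fibres --- is the standard one, and your ``easy'' direction is in the right spirit (it amounts to the exactness of $\pi_1 F \to \pi_1 X \to \pi_1^{orb} S_{g,\mm} \to 1$, where in fact the kernel is exactly the image of $\pi_1 F$, so no extra vanishing-cycle classes are needed; but the local meridian analysis must be done in both directions, not just to show the map descends).

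There are, however, two genuine gaps. First, in the hard direction the analytic step is misstated: for a one-dimensional target the Siu--Sampson Bochner argument does not force a pluriharmonic map to be holomorphic (pluriharmonicity and rank $\leq 2$ are automatic and give nothing); what the harmonic-map or Castelnuovo--de Franchis arguments actually yield is a holomorphic fibration $X' \to C$ onto \emph{some} curve with a \emph{new} complex structure through which $\phi'$ factors, and one must then use finite generation of $\ker \phi'$ to identify $\pi_1^{orb} C$ with $\pi_1 S_h$ (a finitely generated normal subgroup of infinite index in an orbisurface group is trivial). Likewise your descent of $f'$ to $X$ needs $Q$-equivariance, which rests on the uniqueness of the fibration realizing $\phi'$ --- true precisely because the kernel is finitely generated, but nowhere argued. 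Second, and more seriously, the factorization statement is not proved: your maximality argument presupposes that the poset of pairs $(T,f)$ is non-empty, i.e.\ that the given $\phi$ (whose kernel may be infinitely generated, so your biconditional does not apply) factors through \emph{some} holomorphic pencil. Producing that single pencil is the entire content of the Siu--Beauville/Delzant--Corlette--Simpson theorem (cf.\ Theorem \ref{thmDelCorSim} in the paper); the finiteness of orbifold pencils of hyperbolic type only gives maximal elements of a non-empty poset and cannot substitute for existence. (Once one element exists, enlarging the cone multiplicities to the actual multiple-fibre multiplicities of $f$ is elementary and needs no finiteness theorem; and the map $q$ in the factorization need only be a group homomorphism, so the divisibility ``bookkeeping'' you single out is not the substantive difficulty.)
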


The surface group case of the first part of Theorem \ref{lemSiuBeauCat} is due to Siu \cite{Siu-87} and Beauville \cite{Bea-91}. This version of Theorem \ref{lemSiuBeauCat} is proved in \cite{Del-16}. A first explicit version of this result can be found in \cite{Cat-03}; however, it was probably known much earlier (see discussion in \cite{Kot-12}). We will require the following additional observation (see for instance \cite[Remark 2.13]{ABCKT-95}).
\begin{remark}
 Let $G$ be a K\"ahler group and let $\phi: G \to F_r$ be an epimorphism onto a  finitely generated free group. Then $\phi$ factors through an epimorphism $\psi: G\to \G_{g,\mm}$ with finitely generated kernel.
 \label{rmkSiuBeauCat}
\end{remark}

\begin{theorem}
Let $\G_{g,\mm}$ be an orbisurface group, let $A$ be any group and let $G\leq \G_{g,\mm} \times A$. Assume that $G$ is finitely presented and that the intersection $\G_{g,\mm} \cap G$ is infinite. Then $G\cap A$ is finitely generated.
\label{thmBriMil}
\end{theorem}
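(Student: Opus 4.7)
The plan is to realise $G$ as a fibre product and analyse the non-trivial normal subgroup $K := \Lambda\cap G$ inside the surface group $\Lambda$. Writing $L := p_\Lambda(G)\leq \Lambda$, $B := p_A(G)\leq A$, and $M := G\cap A$, one checks that $K\unlhd L$ and $M\unlhd B$ commute elementwise inside $G$ and exhibit $G$ as the fibre product $L\times_Q B$ over the common quotient $Q := L/K \cong B/M$. Since $B$ is finitely generated as a quotient of $G$, it suffices either to show that $Q$ is finite (whence $M$ is a finite-index subgroup of $B$), or to extract finite generation of $M$ from the fibre-product structure together with finite presentation of $G$. I will split according to the structure of $K$.

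Suppose first that $K$ is cyclic. Since $\Lambda$ is torsion-free and acts on $\mathbb{H}^2$ by orientation-preserving isometries, no element of $\Lambda$ inverts a hyperbolic isometry; equivalently, the normalizer $N_\Lambda(K)$ coincides with the centralizer $C_\Lambda(K)$, which is the unique maximal cyclic subgroup of $\Lambda$ containing $K$. Because $L$ normalizes $K$, the subgroup $L$ is itself cyclic, so $Q = L/K$ is finite cyclic and $M$ is a finite-index subgroup of the finitely generated group $B$, hence finitely generated.

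Suppose now that $K$ is non-cyclic. Then, since $\Lambda$ is torsion-free hyperbolic, $K$ contains a non-abelian free subgroup; I pick $a,b\in K$ generating a copy of $F_2$. In a hyperbolic group, two non-commuting infinite-order elements have cyclic centralizers that intersect trivially, so $C_\Lambda(\{a,b\})=1$, which forces $M = C_G(\{a,b\})$. Moreover $L$, as a finitely generated subgroup of $\Lambda$, is either a finite-index surface subgroup or a free group (this is the classification of finitely generated subgroups of surface groups); in either case $L$ is finitely presented. I then aim to deduce $M$ finitely generated via the asymmetric 1-2-3 theorem of Bridson-Howie-Miller-Short applied to the fibre-product description $G=L\times_Q B$: given finite presentation of $G$ and $L$, together with appropriate finiteness of the quotient $Q$, the kernel $M$ of $B\to Q$ must be finitely generated.

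The main obstacle lies in the non-cyclic case: one must verify that $Q = L/K$ has the finiteness type needed to apply this fibre-product machinery, even though $K$ itself may be very far from being finitely generated (e.g.\ the commutator subgroup of a closed surface group). The resolution is to exploit the special nature of normal subgroups in subgroups of surface groups—combined with the assumption that $G$ is finitely presented—to produce finitely many elements of $K$ that normally generate $K$ inside $L$, which then gives the finite presentability of $Q$ required to close the argument.
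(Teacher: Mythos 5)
The paper offers no proof of this statement: it is quoted directly from Bridson and Miller \cite[Theorem 4.6]{BriMil-09}, so your attempt has to stand on its own. Your setup (the fibre-product description $G=L\times_Q B$ with $K\unlhd L$, $M\unlhd B$, $Q=L/K\cong B/M$) and your cyclic case are correct: in a torsion-free Fuchsian group the normaliser of a non-trivial cyclic subgroup coincides with its centraliser and is cyclic, so $L$ is cyclic, $Q$ is finite, and $M$ is a finite-index subgroup of the finitely generated group $B=p_A(G)$.

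The non-cyclic case, which is the substance of the theorem, has a genuine gap. The asymmetric $1$-$2$-$3$ theorem runs in the opposite direction: from finite presentability of the factors, finite generation of one kernel, and type $\mathcal{F}_3$ for $Q$, it deduces finite presentability of the fibre product. It has no converse of the kind you invoke, and the statement you actually use (``$G$ and $L$ finitely presented and $Q$ of suitable finiteness type force $M=\ker(B\to Q)$ to be finitely generated'') is false: take $L\leq \Lambda$ infinite cyclic, $A=B$ a free group of rank $2$, let $B\to Q\cong \ZZ$ be the epimorphism killing one basis element, let $L\to Q$ be an isomorphism, and let $G$ be the fibre product. Then $G$ is free of rank $2$, hence finitely presented, $L$ and $Q$ are of type $\mathcal{F}_\infty$, yet $M=G\cap A$ is an infinitely generated free group. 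This does not contradict Bridson--Miller because here $K=G\cap\Lambda=1$; it shows, however, that the hypothesis $K\neq 1$ must enter the hard step in an essential way, and in your outline it never does beyond choosing a free subgroup of $K$ (the identity $M=C_G(\{a,b\})$ is never used afterwards). The closing paragraph, where you hope to show that $K$ is finitely normally generated in $L$ (equivalently, that $Q$ is finitely presented), is only a declaration of intent --- and it cannot be obtained by the standard ``kernel of a map between finitely presented groups'' argument, since $K=\ker(G\to B)$ and $B$ need not be finitely presented because $A$ is arbitrary. Even if that auxiliary claim were established, the implication you would then feed it into does not exist, so the heart of the theorem remains unproven in your proposal.
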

\begin{proof}
 If $\G_{g,\mm}$ is a non-abelian surface group this is \cite[Theorem 4.6]{BriMil-09}. The general case follows after passing to a finite index subgroup $G_0=G\cap \left(\G_h\times A\right)$ for $\G_h\leq \G_{g,\mm}$ a finite index surface subgroup, since $G_0\cap A = G\cap A$.  
\end{proof}

\begin{lemma}
 Let $G$ be a K\"ahler group and let $\phi: G\rightarrow \overline{G}\leq \G_{g_1,\mm_1}\times \dots \times \G_{g_r,\mm_r}$ be an epimorphism such that the projections $p_i\circ \phi: G \rightarrow \G_{g_i,\mm_i}$ to factors have finitely generated kernel. Then the projection $p_i(\overline{G})\leq \G_{g_i,\mm_i}$ is either virtually cyclic or of finite index.
 \label{lemSurCyc}
\end{lemma}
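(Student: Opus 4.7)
The plan is to argue by contradiction, assuming that $L := p_i(\overline{G})$ has infinite index in $\Gamma_{g_i}$ and is non-cyclic. Since any infinite-index subgroup of a closed surface group is finitely generated and free, this forces $L \cong F_k$ for some $k\geq 2$. Write $\phi_i := p_i\circ\phi \colon G\twoheadrightarrow L$. The idea is to use Theorem \ref{thmNapRam} to force $\phi_i$ to factor through a closed hyperbolic surface group, and then to reach a contradiction using the rigidity of normal subgroups in free and in surface groups.

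First I would fix any surjection $\mu\colon L\twoheadrightarrow \ZZ$. Since $k\geq 2$, $\ker\mu$ is a free group of infinite rank, so the exact sequence $1\to\ker\phi_i\to\ker(\mu\phi_i)\to\ker\mu\to 1$ (with $\ker\phi_i$ finitely generated by hypothesis) shows $\ker(\mu\phi_i)$ is not finitely generated. Theorem \ref{thmNapRam} then produces a surjection $\sigma\colon G\twoheadrightarrow Q:=\pi_1^{orb}S_{h,\nn}$ with $h\geq 2$, finitely generated kernel, and induced by a holomorphic fibration, through which $\mu\phi_i$ factors as $\mu'\circ\sigma$. After passing to a torsion-free finite-index subgroup of $Q$ and the corresponding finite cover of $X$ (still K\"ahler), I may assume $Q=\Gamma_h$ is a closed surface group. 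The crucial observation is that $\phi_i(\ker\sigma)\subseteq\ker\mu$ (by evaluating $\mu\phi_i=\mu'\sigma$ on $\ker\sigma$), and that $\phi_i(\ker\sigma)$ is a finitely generated normal subgroup of $\phi_i(G)=L\cong F_k$. Any nontrivial such subgroup of a non-abelian free group has finite index, because an infinite regular cover of a finite graph has non-finitely-generated fundamental group; since $\ker\mu$ has infinite index in $L$, it cannot contain any finite-index subgroup of $L$, and therefore $\phi_i(\ker\sigma)=1$, i.e.\ $\ker\sigma\subseteq\ker\phi_i$.

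Consequently $\phi_i$ factors as $\tau\circ\sigma$ for some surjection $\tau\colon\Gamma_h\twoheadrightarrow F_k$, whose kernel $\sigma(\ker\phi_i)$ is finitely generated (as the image of a finitely generated group) and nontrivial (since $\Gamma_h$ is not free while $F_k$ is). The final contradiction follows from the classical fact that every nontrivial finitely generated normal subgroup of the one-ended hyperbolic surface group $\Gamma_h$ has finite index: otherwise the corresponding cover of $S_h$ would be a non-compact surface carrying a free cocompact action of the infinite deck group and having finitely generated free $\pi_1$, which would force the quotient to be virtually $\ZZ$ and hence $\Gamma_h$ to contain a $\ZZ^2$-subgroup, contradicting hyperbolicity. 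Applied to $\ker\tau$, this forces $F_k\cong\Gamma_h/\ker\tau$ to be finite, contradicting $k\geq 1$. The main obstacle is the verification of the two normal-subgroup rigidity claims used above (for $F_k$ and for $\Gamma_h$), but both are standard consequences of covering-space arguments together with Nielsen--Schreier and the structure theory of one-ended hyperbolic groups.
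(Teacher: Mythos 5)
Your argument is correct and follows essentially the same route as the paper's proof: reduce to the case where $L=p_i(\overline{G})$ is finitely generated free of rank $\geq 2$, compose with a surjection onto $\ZZ$ whose kernel is infinitely generated, invoke Theorem \ref{thmNapRam} to factor through an orbisurface group with finitely generated kernel, and then use the rigidity of finitely generated normal subgroups in free groups and in (orbi)surface groups to reach a contradiction (your passage to a torsion-free finite-index subgroup is harmless but unnecessary, since the rigidity statement holds for hyperbolic orbisurface groups directly). Two small repairs: $L$ is finitely generated because it is the image of the finitely generated group $G$, not because infinite-index subgroups of surface groups are automatically finitely generated (they need not be); and your parenthetical justification of the surface-group rigidity fact via ``the quotient is virtually $\ZZ$, hence $\Gamma_h$ contains $\ZZ^2$'' is not a valid deduction (hyperbolic free-by-cyclic groups exist), though the fact itself --- a nontrivial finitely generated normal subgroup of $\Gamma_h$ has finite index --- is classical (e.g.\ via quasiconvexity or limit sets) and is exactly what the paper invokes.
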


\begin{proof}
 After possibly passing to finite index subgroups of $\overline{G}$, $G$ and the $\G_{g_i,\mm_i}$, we may assume that $\overline{G}\leq \G_{g_1}\times \dots \times \G_{g_r}$ is a subgroup of a direct product of surface groups. Assume that $p_i(\overline{G})\leq \G_{g_i}$ is not cyclic. Since $p_i(\overline{G})$ is finitely generated, it is either finitely generated free or a surface group. Surface subgroups of $\G_{g_i}$ are precisely the finite index subgroups. Thus, assume that $F_r=p_i(\overline{G})$ is a non-abelian finitely generated free group.
 
Remark \ref{rmkSiuBeauCat} implies that there is a commutative diagram
\[
\xymatrix{ G\ar[r] ^{\psi}\ar[rd]_{p_i\circ \phi} & \pi_1^{orb}(S_{h,\mm})\ar[d]^{\theta}\\ & F_r=p_i(\overline{G}),}
\]
where $\psi$ is an epimorphism with finitely generated kernel for some orbisurface $S_{h,\mm}$. 

Since $p_i\circ \phi$ has finitely generated kernel the same is true for the induced epimorphism $\theta$. However, the only infinite index finitely generated normal subgroup of a closed hyperbolic orbisurface is the trivial group. This implies that $p_i(\overline{G})\cong \pi_1^{orb} (S_{h,\mm})$, a contradiction.
\end{proof}

As a direct consequence of Lemma \ref{lemSurCyc}, we obtain:

\begin{corollary}
 Every K\"ahler subgroup $G$ of a direct product of finitely many orbisurface groups is of the form $G\cong A \times G_0$ with $A$ virtually abelian and $G_0$ a full subdirect product of finitely many orbisurface groups.
\end{corollary}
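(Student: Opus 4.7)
The plan is to combine Lemma \ref{lemSurCyc} with the Bridson--Miller theorem to reduce to a full subdirect situation in which every projection to a factor is either a finite-index surface subgroup or infinite cyclic, and then to split off the resulting central free abelian subgroup as a direct factor. First I would pass to a subdirect embedding by replacing each $\G_{g_i}$ with its image $p_i(G)$, which by Scott's theorem is a subgroup of a surface group. An easy induction on $r$ then reduces to the case where $G$ is full: if $G \cap \G_{g_i} = 1$ for some $i$, then $G$ embeds into the product with the $i$-th factor removed, and the inductive hypothesis applies.

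Assuming $G$ is full and subdirect, $G$ is finitely presented (since K\"ahler groups are finitely presented), so Theorem \ref{thmBriMil} applied with $\Lambda = \G_{g_i}$ and $A = \prod_{j \neq i} \G_{g_j}$ shows that $\ker(p_i)$ is finitely generated for each $i$. Lemma \ref{lemSurCyc}, applied to $\phi = \mathrm{id}_G$, then forces each $p_i(G)$ to be either infinite cyclic or of finite index. After reordering, say $L_i := p_i(G)$ is a non-abelian hyperbolic surface group for $1 \leq i \leq s$ and $L_i \cong \ZZ$ for $s < i \leq r$. Since a non-abelian subgroup of the hyperbolic group $\G_{g_i}$ has trivial centraliser, an element $z \in Z(G)$ must have trivial projection to each $L_i$ for $i \leq s$, whence $Z(G) = G \cap (1 \times \dots \times 1 \times \ZZ^{r-s})$, a free abelian group of some rank $N \geq 0$. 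The quotient $G_0 := G/Z(G)$ embeds into $L_1 \times \dots \times L_s$ as a full subdirect product of hyperbolic surface groups, because $G$ itself is full subdirect and non-trivial elements of $G \cap L_i$ (for $i \leq s$) map injectively to $G_0$.

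The remaining and most delicate step is to promote the central extension $1 \to Z(G) \to G \to G_0 \to 1$ to a direct product decomposition $G \cong \ZZ^N \times G_0$. This is the main obstacle: one needs to produce a retraction $G \to Z(G)$, equivalently to show that the associated extension class in $H^2(G_0, \ZZ^N)$ vanishes. I expect this to follow from the K\"ahler hypothesis via a Hodge-theoretic analysis of the image of $Z(G)$ in $G^{\mathrm{ab}}$, in the spirit of Lemma \ref{lemHodgeHol}, combined with the fact that the cyclic projections $p_i : G \to \ZZ$ for $i > s$ assemble into a homomorphism $G \to \ZZ^{r-s}$ whose restriction to $Z(G)$ is already injective, giving the required splitting.
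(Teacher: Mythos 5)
Your reduction is the natural one and is exactly what the paper has in mind when it calls the corollary a direct consequence of Lemma \ref{lemSurCyc}: discard factors meeting $G$ trivially (induction on $r$), use finite presentability of the K\"ahler group $G$ together with Theorem \ref{thmBriMil} to see that each $\ker (p_i|_G)$ is finitely generated, apply Lemma \ref{lemSurCyc} with $\phi=\mathrm{id}_G$ to conclude that each $p_i(G)$ is infinite cyclic or of finite index, and identify $Z(G)=G\cap\left(\G_{g_{s+1}}\times \dots \times \G_{g_r}\right)$ and $G_0:=G/Z(G)\cong p_{1,\dots,s}(G)$ as a full subdirect product of surface groups. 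Up to this point your argument is correct (the appeal to Scott's theorem is unnecessary: $p_i(G)$ is by definition a subgroup of $\G_{g_i}$, and you apply Theorem \ref{thmBriMil} to the original factors in any case).

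The genuine gap is the final step, which you yourself only state as an expectation: splitting the central extension $1\to Z(G)\to G\to G_0\to 1$. The mechanism you sketch does not work as written. Put $q=p_{s+1,\dots,r}|_G:G\to q(G)$, where $q(G)$ is free abelian of rank $r-s$. The restriction $q|_{Z(G)}$ is indeed injective, but $Z(G)$ is in general only a \emph{proper finite-index} subgroup of $q(G)$, and a proper finite-index subgroup $B$ of a free abelian group $A$ is never a retract of $A$: a homomorphism $\rho:A\to B$ with $\rho|_B=\mathrm{id}$ becomes the identity after tensoring with $\QQ$, forcing $B=A$. So injectivity of $q|_{Z(G)}$ yields no retraction $G\to Z(G)$, which is what the desired direct-product decomposition requires. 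Nor does Lemma \ref{lemHodgeHol} help as invoked, since it only controls ranks (parity), not the integral extension class. What your setup does give for free is that $G$ is the fibre product of $G_0\to q(G)/Z(G)\leftarrow q(G)$, so the extension class in $H^2\bigl(G_0;\ZZ^{r-s}\bigr)$ is pulled back from the finite abelian group $q(G)/Z(G)$ and is therefore torsion; the missing content of the proof is precisely an argument that this torsion class vanishes (for instance by controlling torsion in $H_1(G_0;\ZZ)$, or by some further use of the embedding or of the K\"ahler hypothesis). As it stands, your proposal establishes the structure of $Z(G)$ and $G_0$ but does not prove the asserted isomorphism $G\cong \ZZ^N\times G_0$; for comparison, the paper gives no details for this splitting either, stating the corollary without proof after Lemma \ref{lemSurCyc}.
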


\begin{proposition}
Let $G=\pi_1 (X)$ and assume that there is a homomorphism $\phi: G\rightarrow \G_{g_1,\mm_1}\times \dots \times \G_{g_r,\mm_r}$ that is induced by a holomorphic map $f: X \rightarrow S_{g_1,\mm_1}\times \dots \times S_{g_r,\mm_r}$. Assume further that the image $\overline{G}=\phi(G)\leq \G_{g_1,\mm_1}\times \dots \times \G_{g_r,\mm_r}$ is full and of type $\mathcal{F}_k$ for $k>\frac{r}{2}$. Then $\overline{G} \leq \G_{g_1,\mm_1}\times \dots \times \G_{g_r,\mm_r}$ is virtually holomorphically coabelian.
\label{propNewCoab}
\end{proposition}
\begin{proof}
 Since the composition $X\rightarrow S_{g_i,\mm_i}$ of $f$ with the projection to $S_{g_i,\mm_i}$ is holomorphic, it is either surjective or constant. The latter can not happen, since we assumed that $\overline{G}\leq \G_{g_1,\mm_1}\times \dots \times \G_{g_r,\mm_r}$ is full. Hence, the projection $p_i(\overline{G})\leq \G_{g_i,\mm_i}$ is a finite index subgroup. It follows that $\overline{G}\leq p_1(\overline{G})\times \dots \times p_r(\overline{G})$ is a full subdirect product of orbisurface groups.
 
By passing to finite index surface subgroups of the $p_i(\overline{G})$ and then applying Proposition \ref{thmCoNilpFm}, we deduce that there exist $l\geq 0$, finite index surface subgroups $\G_{h_i}\leq p_i(\overline{G})$ and $\overline{G}_0\leq \overline{G}$, and an epimorphism $\psi: \G_{h_1}\times \dots \times \G_{h_r}\rightarrow \ZZ^l$ such that $\overline{G}_0=\ker (\psi)$. 

Let $q: X_0\rightarrow X$ be the holomorphic cover corresponding to $\overline{G}_0$ and let $q_i: S_{h_i}\rightarrow S_{g_i,\mm_i}$ be the holomorphic covers corresponding to $\G_{h_i}$. Since $(f\circ q)_{\ast} (\pi_1 (X_0))\leq \G_{h_1}\times \dots \times \G_{h_r}$, there is a lift $g: X_0 \rightarrow S_{h_1}\times \dots \times S_{h_r}$ making the diagram
\[
\xymatrix{ & S_{h_1}\times \dots \times S_{h_r}\ar[d]^{(q_1,\dots,q_r)}\\
		   X_0 \ar[ur]^g \ar[r]_(.25){f\circ q} & S_{g_1,\mm_1}\times \dots \times S_{g_r,\mm_r}}
\]
commutative. Considering local charts we see that the map $g$ is holomorphic. It follows from Lemma \ref{lemHodgeHol} that $\overline{G}_0\leq \G_{h_1}\times \dots \times \G_{h_r}$ is holomorphically coabelian.
\end{proof}

Theorem \ref{thmNewCoab} will be a consequence of the following more general result for orbisurfaces and its proof. 

\begin{theorem}
Let $G=\pi_1 (X)$ with $X$ compact K\"ahler and let $\phi: G \rightarrow \overline{G}$ be a surjective homomorphisms onto a subgroup $\overline{G}\leq \G_{g_1,\mm_1}\times \dots \times \G_{g_r,\mm_r}$. Assume that $\phi$ has finitely generated kernel and that $\overline{G}$ is full and of type $\mathcal{F}_m$ for $m\geq 2$.
 
 Then, after reordering factors, there is $s\geq 0$, such that, for any $k< 2m$ and any $1\leq i_1 < \dots < i_k\leq s$, the projection $p_{i_1,\dots,i_k}(\overline{G})\leq \G_{g_{i_1},\mm_{i_1}}\times \dots \times \G_{g_{i_k},\mm_{i_k}}$ is virtually holomorphically coabelian. Furthermore, $\overline{G} \cap \left(\G_{g_{s+1},\mm_{s+1}}\times \dots \times \G_{g_r,\mm_{r}}\right)\leq p_{s+1,\dots,r}(\overline{G})$ is a virtually abelian finite index subgroup. 

\label{thmIntro1Orbi}
\end{theorem}

\begin{proof}
 By Lemma \ref{lemSurCyc}, the projections $p_i(\overline{G})\leq \G_{g_i,\mm_i}$ are either virtually cyclic or of finite index. Thus, after reordering factors, we may assume that there is $s\geq 0$ such that $p_i(\overline{G})\leq \G_{g_i,\mm_i}$ is a finite index subgroup for $1\leq i \leq s$ and virtually cyclic for $s+1\leq i \leq r$. We may further assume that $p_i(\overline{G})=\G_{g_i,\mm_i}$ for $1\leq i \leq s$, since finite index subgroups of orbisurface groups are orbisurface groups. Hence, we obtain a short exact sequence 
 \begin{equation}
 \label{eqnsesCenter}
  1 \rightarrow N \rightarrow \overline{G} \rightarrow p_{1,\dots,s}(\overline{G})\rightarrow 1,
 \end{equation}
where $p_{1,\dots,s}(\overline{G})\leq \G_{g_1,\mm_1}\times \dots \times \G_{g_s,\mm_s}$ is a full subdirect product and $N=\overline{G}\cap (\G_{g_{s+1},\mm_{s+1}}\times \dots \times \G_{g_r,\mm_r})$ is virtually abelian. In particular, $N$ is of type $\mathcal{F}_{\infty}$ and thus the group $p_{1,\dots,s}(\overline{G})$ is of type $\mathcal{F}_m$ \cite[Proposition 2.7]{Bie-81}.

For $1\leq i \leq s$, the kernel of the projection $p_i\circ\phi: G\rightarrow \G_{g_i,\mm_i}$ is the extension

\[
 1 \rightarrow \ker (\phi) \rightarrow \ker(p_i\circ \phi)\rightarrow \overline{G}\cap \left(\G_{g_1,\mm_1}\times \dots \times \G_{g_{i-1},\mm_{i-1}}\times 1 \times\G_{g_{i+1},\mm_{i+1}}\times \dots \times \G_{g_r,\mm_r}\right)\rightarrow 1.
\]

By Theorem \ref{thmBriMil}, the group $$\overline{G}\cap \left(\G_{g_1,\mm_1}\times \dots \times \G_{g_{i-1},\mm_{i-1}}\times 1 \times\G_{g_{i+1},\mm_{i+1}}\times \dots \times \G_{g_r,\mm_r}\right)$$ is finitely generated, since $\overline{G}$ is finitely presented and $\left\{1\right\}\neq\G_{g_i,\mm_i}\cap \overline{G}\unlhd \G_{g_i,\mm_i}$ is normal and thus infinite. Extensions of finitely generated groups by finitely generated groups are finitely generated. Thus, the group $\ker(p_i\circ \phi)$ is finitely generated.

Theorem \ref{lemSiuBeauCat} implies that the epimorphism $p_i\circ \phi: G\rightarrow \G_{g_i,\mm_i}$ is induced by a holomorphic map $f_i: X\rightarrow S_{g_i,\mm_i}$. It follows that the map
\[
f=(f_1,\dots,f_s): X\rightarrow S_{g_1,\mm_1}\times \dots \times S_{g_s,\mm_s}
\] 
is a holomorphic map inducing the compostion $p_{1,\dots,s}\circ \phi$ on fundamental groups.

For any $k\geq 0$ and $1\leq i_1< \dots < i_k\leq s$, the projection $S_{g_1,\mm_1}\times \dots \times S_{g_s,\mm_s}\rightarrow S_{g_{i_1},\mm_{i_1}}\times \dots \times S_{g_{i_k},\mm_{i_k}}$ is holomorphic and hence so is its composition $f_{i_1,\dots,i_k}:X\rightarrow S_{g_{i_1},\mm_{i_1}}\times \dots\times S_{g_{i_k},\mm_{i_k}}$ with $f$. Thus, the homomorphism $p_{i_1,\dots,i_k}\circ \phi: G\rightarrow p_{i_1,\dots,i_k}(\overline{G})$ is induced by the holomorphic map $f_{i_1,\dots,i_k}$.

If $k<2 m$, Corollary \ref{corFinPropsProjFactors} implies that $p_{i_1,\dots,i_k}(\overline{G})\leq \G_{g_{i_1},\mm_{i_1}}\times \dots \times \G_{g_{i_k},\mm_{i_k}}$ has a finite index subgroup of type $\mathcal{F}_m$ and thus is itself of type $\mathcal{F}_m$. Hence, Proposition \ref{propNewCoab} implies that $p_{i_1,\dots,i_k}(\overline{G})$ is virtually holomorphically coabelian.
\end{proof}

We can now complete the proof of Theorem \ref{thmNewCoab}.
\begin{proof}[Proof of Theorem \ref{thmNewCoab}]
The only part that does not follow immediately from Theorem \ref{thmIntro1Orbi} is the furthermore part. Since $\overline{G}\leq \G_{g_1}\times \dots \times \G_{g_r}$ is full and all subgroups of surface groups are either free or surface groups, the virtually cyclic intersections $\overline{G}\cap \G_{g_i}$ are all isomorphic to $\ZZ$. Thus, the group $N$ in the proof of Theorem \ref{thmIntro1Orbi} is isomorphic to $\ZZ^{r-s}$. Since the centralizer of a non-trivial element in a surface group is cyclic we deduce that $\mathrm{Z}(\overline{G})=N=\overline{G}\cap \left(\G_{g_{s+1}}\times \dots \times \G_{g_r}\right) \leq p_{s+1,\dots,r}(\overline{G})\cong \ZZ^{r-s}$ is a finite index subgroup.
\end{proof}

\begin{remark}
Observe that the proof of Theorem \ref{thmIntro1Orbi} shows more generally that if $\phi :G \rightarrow \overline{G}\leq \G_{g_1 ,\mm_1}\times \dots \times \G_{g_r,\mm_r}$ is a homomorphism from $G=\pi_1 (X)$, with $X$ compact K\"ahler, onto a finitely presented full subdirect product $\overline{G}\leq \G_{g_1,\mm_1}\times \dots \times \G_{g_r,\mm_r}$, then there exists a unique complex structure on the product $S_{g_1,\mm_1}\times \dots \times S_{g_r,\mm_r}$ of topological orbifolds such that $\phi = f_{\ast}$ for a holomorphic map $f: X \to S_{g_1,\mm_1}\times \dots \times S_{g_r,\mm_r}$.
\label{rmkNewCoab1}
\end{remark}

Note further that the proof of Theorem \ref{thmIntro1Orbi} also shows that the same conclusion holds if we replace the assumption that $\overline{G}$ is of type $\mathcal{F}_m$ by the assumption that $\overline{G}\leq \G_{g_1,\mm_1}\times \dots \times \G_{g_r,\mm_r}$ is virtually coabelian and finitely presented.
\begin{corollary}
\label{rmkNewCoab}
 Let $G=\pi_1 (X)$ with $X$ compact K\"ahler and let $\phi: G \rightarrow \overline{G}$ be a surjective homomorphisms onto a subgroup $\overline{G}\leq \G_{g_1,\mm_1}\times \dots \times \G_{g_r,\mm_r}$. Assume that $\phi$ has finitely generated kernel and that $\overline{G}$ is virtually coabelian and finitely presented.
 
 Then, for any $0\leq k \leq r$ and any $1\leq i_1 < \dots < i_k\leq r$, the projection $p_{i_1,\dots,i_k}(\overline{G})\leq \G_{g_{i_1},\mm_{i_1}}\times \dots \times \G_{g_{i_k},\mm_{i_k}}$ is virtually holomorphically coabelian. 

\end{corollary}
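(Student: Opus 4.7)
The plan is to follow the proof of Theorem \ref{thmNewCoab} essentially verbatim, observing that the type $\mathcal{F}_m$ hypothesis was used there only to deduce, via Corollary \ref{corFinPropsProjFactors} and Proposition \ref{propNewCoab}, that the projections $p_{i_1,\dots,i_k}(\overline{G})$ are virtually coabelian in the range $k<2m$. In the present statement, virtual coabelianness of $\overline{G}$ is assumed directly, so this intermediate step becomes a purely group-theoretic fact available for every $k$-tuple, and no restriction of the form $k<2m$ is needed.

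Concretely, I would first reproduce the reduction from Theorem \ref{thmNewCoab}. Theorem \ref{thmBriMil}, combined with finite presentability of $\overline{G}$ and fullness, ensures that $\overline{G}\cap\prod_{j\neq i}\G_{g_j}$ is finitely generated, whence each $p_i\circ\phi\colon G\to\G_{g_i}$ has finitely generated kernel. Lemma \ref{lemSurCyc} then forces $p_i(\overline{G})\leq\G_{g_i}$ to be cyclic or of finite index, so after reordering factors there is $s\geq 0$ with $p_i(\overline{G})$ of finite index for $i\leq s$. Theorem \ref{lemSiuBeauCat} next produces, for each $1\leq i\leq s$, a holomorphic map $f_i\colon X\to S_{g_i}$ (for a suitable complex structure on $S_{g_i}$) inducing $p_i\circ\phi$, and I would assemble these into a single holomorphic map $f=(f_1,\dots,f_s)\colon X\to S_{g_1}\times\dots\times S_{g_s}$.

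For any tuple $i_1<\dots<i_k$ with $i_k\leq s$, the composition $f_{i_1,\dots,i_k}=p_{i_1,\dots,i_k}\circ f$ is then holomorphic and induces $p_{i_1,\dots,i_k}\circ\phi$ on fundamental groups. Writing $\overline{G}_0=\ker\psi\leq\overline{G}$ for a finite-index coabelian refinement with $\psi\colon\G'_{g_1}\times\dots\times\G'_{g_r}\to\ZZ^l$ (and $\G'_{g_j}\leq\G_{g_j}$ of finite index), a direct computation identifies $p_{i_1,\dots,i_k}(\overline{G}_0)$ with the kernel of the homomorphism $\G'_{g_{i_1}}\times\dots\times\G'_{g_{i_k}}\to\ZZ^l\big/\psi\bigl(\prod_{j\notin\{i_1,\dots,i_k\}}\G'_{g_j}\bigr)$ induced by $\psi$, exhibiting $p_{i_1,\dots,i_k}(\overline{G})$ as virtually coabelian of an explicit rank. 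To obtain evenness I would pass to the finite cover $X'\to X$ corresponding to $\phi^{-1}(\overline{G}_0)$ and to the finite covers of the $S_{g_{i_j}}$ corresponding to the $\G'_{g_{i_j}}$; the lift of $f_{i_1,\dots,i_k}$ is a holomorphic map between compact K\"ahler manifolds whose induced homomorphism on $\pi_1$ has image exactly the coabelian subgroup $p_{i_1,\dots,i_k}(\overline{G}_0)$, and Lemma \ref{corNewCoab} then gives that its coabelian rank is even. The only real obstacle is the bookkeeping needed to match coabelian ranks under projection to subsets of factors and under passing to finite covers, but this is entirely routine once written out carefully.
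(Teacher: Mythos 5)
Your overall route is the paper's: pass to a coabelian finite-index subgroup, rerun the argument of Theorem \ref{thmNewCoab} (Theorem \ref{thmBriMil}, Lemma \ref{lemSurCyc}, Theorem \ref{lemSiuBeauCat}) to realise the relevant projections of $\phi$ holomorphically, observe that projections of coabelian subgroups are virtually coabelian, and get evenness from the Hodge-theoretic Lemma \ref{corNewCoab} via the covering trick of Proposition \ref{propNewCoab}. The genuine gap is that you only treat tuples with $i_k\le s$, i.e.\ tuples of factors on which $\overline{G}$ projects with finite index, whereas the statement asserts the conclusion for \emph{every} tuple $1\le i_1<\dots<i_k\le r$. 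You must show that the cyclic alternative in Lemma \ref{lemSurCyc} never occurs here, i.e.\ that $s=r$; otherwise factors with cyclic projection are simply not covered by your argument. This is exactly where virtual coabelianness has to be used: since $\overline{G}_0=\ker\psi$ contains $\prod_{j}[\G'_{g_j},\G'_{g_j}]$, each projection $p_i(\overline{G})$ contains the non-abelian group $[\G'_{g_i},\G'_{g_i}]$, hence is not cyclic, and Lemma \ref{lemSurCyc} then forces it to be of finite index. (The same containment is what justifies the fullness you invoke, without comment, when applying Theorem \ref{thmBriMil}.) The paper makes this move by asserting that the finitely presented coabelian subgroup is full subdirect; some such observation is indispensable, and without it your proof establishes the corollary only for a sub-collection of tuples.

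A second, smaller point hides in the "routine bookkeeping": the group $p_{i_1,\dots,i_k}(\overline{G}_0)$ you identify is the kernel of the induced map onto $\ZZ^l/\psi\bigl(\prod_{j\notin\{i_1,\dots,i_k\}}\G'_{g_j}\bigr)$, and this quotient may have torsion, so the image of your lifted holomorphic map is only \emph{virtually} coabelian; Lemma \ref{corNewCoab} as stated requires coabelian image. Either replace the target by its torsion-free quotient (your image then sits with finite index in the corresponding kernel, which is coabelian), or rerun the proof of Lemma \ref{corNewCoab} directly: by right-exactness of abelianisation the quotient above is the cokernel of the induced map on first homology, and Lemma \ref{lemHodgeHol} shows its rank is even. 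With these two repairs your proposal coincides with the paper's proof.
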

\begin{proof}
 After passing to a finite index subgroups, we may assume that $\overline{G}$ is coabelian in a direct product of orbisurface groups. Since $\overline{G}$ is finitely presented, it is also a full subdirect product. Now the same arguments as in the proof of Theorem \ref{thmIntro1Orbi} show that $\phi$ is induced by a holomorphic map. The fact that projections of coabelian subgroups of direct products of groups are themselves virtually coabelian completes the proof.
\end{proof}

Corollary \ref{corNewCoabExs} is a direct consequence of Corollary \ref{rmkNewCoab}. More generally, we have 

\begin{corollary}
\label{corNewCoabExs2}
 With the same notation as in Corollary \ref{corNewCoabExs}, for $G=\ker (\psi)$ coabelian of odd rank and $G_1$ any finitely presented group, $H=G \times G_1$ is not K\"ahler. 
\end{corollary}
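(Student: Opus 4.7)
The plan is to run the argument used to deduce Corollary \ref{corNewCoabExs} from Corollary \ref{rmkNewCoab}, with the projection $H\twoheadrightarrow G$ playing the role previously played by the identity. Suppose for contradiction that $H = G \times G_1 = \pi_1 X$ for some compact K\"ahler manifold $X$. Since K\"ahler groups are finitely presented and $G_1$ is finitely presented by hypothesis, $G$ is also finitely presented: this is the standard fact that in a direct product decomposition of a finitely presented group, each direct factor is itself finitely presented as soon as the other factor is known to be so.

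Next, I would consider the homomorphism
\[
 \phi: H = G \times G_1 \longrightarrow \G_{g_1}\times \dots \times \G_{g_r}, \quad (g, g_1)\longmapsto g,
\]
obtained by composing the projection onto the $G$-factor with the inclusion $G\hookrightarrow \G_{g_1}\times \dots \times \G_{g_r}$. The image of $\phi$ is $\overline{G}=G$, which is coabelian (hence virtually coabelian) and finitely presented, while the kernel of $\phi$ is $1\times G_1 \cong G_1$, which is finitely generated. Thus all hypotheses of Corollary \ref{rmkNewCoab} are satisfied for $\phi$, and taking $k=r$ and $(i_1,\dots, i_r)=(1,\dots, r)$ I would conclude that $G$ itself is virtually coabelian of even rank in $\G_{g_1}\times \dots \times \G_{g_r}$.

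The final step is to contradict the assumption that $G=\ker \psi$ is coabelian of odd rank $2l+1$ using the fact that the coabelian rank is a commensurability invariant (as indicated in Remark \ref{remNotIsom}). Concretely, if $G_0\leq G$ and $\Lambda_i\leq \G_{g_i}$ are finite index subgroups such that $G_0 = \ker(\psi_0: \Lambda \to \ZZ^{2m})$ with $\Lambda=\Lambda_1\times\dots\times\Lambda_r$, then $\Lambda/G_0\cong \ZZ^{2m}$ and $\Lambda/(G\cap \Lambda)$ is a finite index subgroup of $(\G_{g_1}\times\dots\times\G_{g_r})/G \cong \ZZ^{2l+1}$, and the short exact sequence
\[
 1 \longrightarrow (G\cap \Lambda)/G_0 \longrightarrow \Lambda/G_0 \longrightarrow \Lambda/(G\cap \Lambda) \longrightarrow 1
\]
has finite left-hand term, forcing the ranks to agree: $2m=2l+1$, a contradiction.

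The main obstacle is really just the preliminary reduction, namely confirming that the finite presentation of $H$ together with that of $G_1$ forces $G$ to be finitely presented; this is needed so that Corollary \ref{rmkNewCoab} applies to $\overline{G}=G$. Once this is in hand, the rest is an essentially formal application of Corollary \ref{rmkNewCoab} combined with the commensurability invariance of coabelian rank.
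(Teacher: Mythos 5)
Your proof is correct and takes essentially the same route as the paper, which deduces the corollary by applying Corollary \ref{rmkNewCoab} to the projection $H \rightarrow G$. You simply make explicit two points the paper treats as immediate: that $G$ is finitely presented as a direct factor of the finitely presented K\"ahler group $H$ (so Corollary \ref{rmkNewCoab} applies with $\overline{G}=G$ and finitely generated kernel $G_1$), and that the coabelian rank is invariant under passing to finite index subgroups, which yields the parity contradiction.
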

\begin{proof}
This is an immediate consequence of Corollary \ref{rmkNewCoab} applied to the projection $H\to G$.
\end{proof}

\begin{remark}
 Note that in particular Corollary \ref{corNewCoabExs2} applies to the direct product of any two full subdirect products of orbisurface groups which are coabelian of odd rank. Thus, we can use Corollary \ref{corNewCoabExs2} to construct full subdirect products of orbisurface groups which are coabelian of even rank, but not K\"ahler.
 \label{remNewCoabExs}
\end{remark}

This provides us with large classes of examples of non-K\"ahler subgroups of direct products of surface groups. As we will see in Section \ref{secSESCoab}, many of the examples in Corollary \ref{corNewCoabExs} share the property that they are non-K\"ahler for the reason that their first Betti number is odd.

We want to emphasize that a particularly strong version of Corollary \ref{corNewCoabExs} holds in the case of a direct product of three orbisurface groups.

\begin{theorem}
Let $X$ be a compact K\"ahler manifold and let $G=\pi_1 (X)$. Let $\psi: G \to  \G_{g_1,\mm_1}\times \G_{g_2,\mm_2}\times \G_{g_3,\mm_3}$ be a homomorphism such that the projection $p_i \circ \psi$  has finitely generated kernel for $1\leq i \leq r$ and the image $\overline{G}:= \mathrm{Im}(\psi)$ of $\psi$ is finitely presented. Then one of the following holds:
 \begin{enumerate}
  \item $\overline{G}=\pi_1^{orb} (R)$ for $R$ a closed hyperbolic orbisurface;
  \item $\overline{G}$ is virtually $\ZZ^k$ for $0\leq k\leq 3$
  \item $\overline{G}$ is virtually $\ZZ^k \times \Gamma_h$ for $h\geq 2$ and $k\in \left\{1,2\right\}$;
  \item $\overline{G}$ is virtually a direct product $\ZZ^k\times \Gamma_{h_1}\times \Gamma_{h_2}$ for $h_1, h_2\geq 2$ and $k\in \left\{0,1\right\}$;
  \item $\overline{G}$ is virtually holomorphically coabelian.
 \end{enumerate}
 \label{thm3Factor}
\end{theorem}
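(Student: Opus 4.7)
The plan is to classify the image $\overline{G}:=\psi(G)\leq \G_{g_1}\times \G_{g_2}\times \G_{g_3}$ via Theorem~\ref{thmNewCoab} and then transfer the resulting structure back to $G$ itself. First, using the hypothesis that each $p_i\circ\psi$ has finitely generated kernel, together with the finite presentability of $\overline{G}$ and Theorem~\ref{thmBriMil}, one verifies that $\ker\psi$ is finitely generated, placing us in the setting of Theorem~\ref{thmNewCoab}. Next, I apply Lemma~\ref{lemSurCyc} to the surjection $G\to\overline{G}$: each projection $p_i(\overline{G})\leq \G_{g_i}$ is either cyclic or of finite index in $\G_{g_i}$. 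After reordering the factors and discarding trivial projections, let $s\in\{0,1,2,3\}$ denote the number of finite-index projections, with $t:=3-s$ of the remaining projections having image $\ZZ$. Theorem~\ref{thmNewCoab} (applied with $r=3$, $m=2$) then produces the short exact sequence
\[
1\to Z(\overline{G})\to\overline{G}\to p_{1,\ldots,s}(\overline{G})\to 1,
\]
with $Z(\overline{G})$ of finite index in $\ZZ^t$, and $p_{1,\ldots,k}(\overline{G})$ virtually coabelian of even rank for every $1\leq k\leq 3$.

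I then carry out a case analysis on $s$. When $s=3$, the subgroup $\overline{G}$ is itself virtually coabelian of even rank, giving case (5). When $s=2$, Theorem~\ref{thmBriMil} together with the classical fact that a finitely presented full subdirect product of two surface groups is of finite index shows that $p_{1,2}(\overline{G})$ is virtually $\G_{h_1}\times \G_{h_2}$; since $\overline{G}$ embeds in a direct product where the $Z(\overline{G})\cong\ZZ$ factor lies in an external coordinate, the central extension virtually splits, giving case (4). The case $s=1$ is analogous, producing $\overline{G}$ virtually $\ZZ^{t}\times \G_{h}$ with $t\in\{1,2\}$ (case (3)) or $t=0$ (case (1)). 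Finally, $s=0$ forces $\overline{G}\leq \ZZ^{3}$ to be abelian of rank $t$, placing $G$ in case (2), or in case (1) in the degenerate situations where $R\in\{S^2, T^2\}$.

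To transfer the classification from $\overline{G}$ to $G$, I use that $\psi$ is induced by a holomorphic map $f:X\to S_{g_1}\times S_{g_2}\times S_{g_3}$: this follows from Theorem~\ref{lemSiuBeauCat} applied to the finite-index projections onto surface groups, combined with the standard realisation of cyclic characters via integration of holomorphic $1$-forms on $X$. The fibres of $f$ control $\ker\psi$, and the finite generation of $\ker\psi$ then restricts its structure; combined with the K\"ahler condition on $G$, this forces $G$ to be virtually isomorphic to $\overline{G}$ in each of the cases above.

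The principal obstacle lies in the low-rank cases $s\leq 2$: one must verify that the central extensions $1\to Z(\overline{G})\to\overline{G}\to p_{1,\ldots,s}(\overline{G})\to 1$ virtually split as direct products by exploiting the concrete embedding into the three-factor surface-group product (rather than an abstract cohomological argument), and simultaneously rule out that $\ker\psi$ contributes any additional non-abelian factor to $G$. Managing the interplay between these two tasks, via the Hodge parity constraint of Lemma~\ref{lemHodgeHol} and the analysis of fibres of holomorphic maps onto orbisurfaces, will be the most delicate aspect of the argument.
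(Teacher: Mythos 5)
Your overall skeleton (Lemma \ref{lemSurCyc} to make each projection cyclic or of finite index, then a case analysis with the coabelian-of-even-rank result in the top case) is the same as the paper's, but two of your load-bearing steps do not hold. First, the opening claim that $\ker\psi$ is finitely generated is unjustified: Theorem \ref{thmBriMil} gives finite generation of $\overline{G}\cap(\G_{g_j}\times\G_{g_k})$, i.e.\ of the quotient $\ker(p_i\circ\psi)/\ker\psi$, and finite generation of $\ker(p_i\circ\psi)$ together with finite generation of that quotient says nothing about the subgroup $\ker\psi$; nothing in the hypotheses forces $\ker\psi$ to be finitely generated. The paper never needs it: the holomorphic realisation is obtained factorwise from Theorem \ref{lemSiuBeauCat}, using exactly the hypothesis that each $\ker(p_i\circ\psi)$ is finitely generated, and the coabelian conclusion comes from Proposition \ref{propNewCoab} together with Remark \ref{rmkNewCoab1}, not from Theorem \ref{thmNewCoab}. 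This matters because Theorem \ref{thmNewCoab}, on which your entire case analysis rests, assumes both that the kernel is finitely generated and that $\overline{G}$ is \emph{full} -- neither is available here.

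The fullness issue is not cosmetic: ``discarding trivial projections'' removes factors onto which $\overline{G}$ projects trivially, which is not the same as removing factors that $\overline{G}$ meets trivially, and it is precisely the non-full images that produce conclusions (1)--(4). Concretely, take $G=\G_{g}\times\G_{g_3}$ and $\psi(x,y)=(x,\alpha(x),y)$ with $\alpha:\G_g\to\G_{g_2}$ an isomorphism: all three composed projections are surjective with finitely generated kernel and the image is finitely presented, so in your notation $s=3$ and you would conclude case (5); but the image meets the first two factors trivially and is not virtually coabelian in the ambient product -- the correct conclusion is (4). The paper avoids this by first treating the non-full and non-subdirect cases (projecting away from factors with trivial intersection, then invoking the VSP/finite-index phenomenon for finitely presented full subdirect products of two surface groups), and only applying Proposition \ref{propNewCoab} in the remaining full subdirect case. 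Finally, your closing ``transfer'' step -- that finite generation of $\ker\psi$ plus the K\"ahler condition forces $G$ to be virtually isomorphic to $\overline{G}$ -- is also false as stated (a finitely generated kernel, e.g.\ a surface-group direct factor killed by $\psi$, changes $G$ without changing $\overline{G}$); the paper's proof performs no such transfer, since its case distinction is carried out entirely for the image sitting inside $\G_{g_1}\times\G_{g_2}\times\G_{g_3}$.
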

\begin{proof}
 Since centralizers in surface groups are cyclic, every free abelian subgroup of $\overline{G}$ has rank $\leq 3$. We will distinguish cases, making repeated use of Lemma \ref{lemSurCyc}. If $\overline{G}$ has virtually cyclic projection to all factors, then $\overline{G}$ is virtually abelian and thus (2) holds. Hence, we may assume that $\overline{G}$ is not virtually abelian. Assume first that $\overline{G}$ is not a full subgroup. After projecting away from factors which have trivial intersection with $\overline{G}$, we either obtain that $\overline{G}$ is a subgroup of a hyperbolic orbisurface group in which case (1) holds, or a full subgroup of a direct product of two hyperbolic orbisurface groups. If the latter holds then either $\overline{G}$ is subdirect in a direct product $\Gamma_{\g_1,\undll_1}\times \Gamma_{\g_2,\undll_2}$ and the VSP property implies that (4) holds, or (3) holds with $k=1$. If $\overline{G}$ is full and not subdirect in any $\G_{\g_1,\undll_1}\times \G_{\g_2,\undll_2}\times \G_{\g_3,\undll_3}$ then we must also have (3) with $k=2$ or (4). Else $\overline{G}$ must be full subdirect (after passing to finite index subgroups of the $\G_{g_i,\mm_i}$) and it follows from finite presentability, Proposition \ref{propNewCoab}, and Remark \ref{rmkNewCoab1} that (5) holds.
\end{proof}

\begin{remark}
Note that the assumption that $p_i\circ \psi$ has finitely generated kernel in Theorem \ref{thm3Factor} can be replaced by the assumption that $p_i \circ \psi$ has either virtually cyclic image or is induced by a holomorphic map for $1\leq i \leq r$, as we can then apply Proposition \ref{propNewCoab} directly to obtain the conclusion. 
\end{remark}

\section{Short exact sequences on abelianizations}
\label{secSESCoab}
Note that for a holomorphic map $f: X\rightarrow Y$, Lemma \ref{lemHodgeHol} does not provide us with the parity of the first Betti number $b_1(f_{\ast}(\pi_1(X)))$ of the group $f_{\ast}(\pi_1 (X))$, but only with the rank of the abelian subgroup $f_{\ast,ab}((\pi_1(X))_{ab})\leq (\pi_1(Y))_{ab}$. This is, because in general the map $f_{\ast,ab}:(\pi_1 (X))_{ab} \rightarrow (\pi_1 (Y))_{ab}$ is not injective. 

We will now show that in the case of coabelian subgroups of direct products of surface groups with strong enough finiteness properties we can actually obtain (virtual) injectivity and as a consequence we obtain that in fact $b_1(f_{\ast}(\pi_1 (X)))$ is even. This will follow from more general group theoretic considerations.

We will make use of the following easy and well-known fact

\begin{lemma}
 Let $G$ and $H$ be groups and let $\phi: G\rightarrow H$ be an injective homomorphism. Then the following are equivalent:
 
 \begin{enumerate}
  \item the induced map $\phi_{ab}: G_{ab}\rightarrow H_{ab}$ on abelianisations is injective;
  \item $\phi(\left[G,G\right])=\phi(G)\cap \left[H,H\right]$.
 \end{enumerate}
 \label{lemInjonAbel}
\end{lemma}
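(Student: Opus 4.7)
The plan is to directly chase the kernel of $\phi_{ab}$ through the commutative square
\[
\xymatrix{G\ar[r]^-{\pi_G}\ar[d]_{\phi} & G_{ab}\ar[d]^{\phi_{ab}}\\ H\ar[r]^-{\pi_H} & H_{ab}}
\]
using the fact that $\phi([G,G])\subseteq [H,H]$ (which is automatic, since $\phi$ is a homomorphism) together with the injectivity of $\phi$ on $G$.

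First I would unwind $\ker(\phi_{ab})$. A coset $g[G,G]\in G_{ab}$ lies in $\ker(\phi_{ab})$ if and only if $\phi(g)\in [H,H]$, so $\phi_{ab}$ is injective precisely when $\phi^{-1}([H,H])\subseteq [G,G]$; the reverse inclusion $[G,G]\subseteq \phi^{-1}([H,H])$ is automatic. Thus (1) is equivalent to the equality $\phi^{-1}([H,H])=[G,G]$.

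Next I would translate this into the statement (2). Applying $\phi$ to both sides of $\phi^{-1}([H,H])=[G,G]$ gives $\phi(G)\cap [H,H]=\phi([G,G])$, using the set-theoretic identity $\phi(\phi^{-1}(S))=\phi(G)\cap S$ (valid for any $\phi$). This shows (1)$\Rightarrow$(2). For the converse, assume (2) and let $g\in G$ with $\phi(g)\in [H,H]$; then $\phi(g)\in \phi(G)\cap [H,H]=\phi([G,G])$, so $\phi(g)=\phi(h)$ for some $h\in [G,G]$. Injectivity of $\phi$ forces $g=h\in [G,G]$, giving $\phi^{-1}([H,H])\subseteq [G,G]$, i.e.\ (1).

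There is essentially no obstacle here; the only subtle point is to notice that injectivity of $\phi$ is used in exactly one place, namely passing back from $\phi(g)=\phi(h)$ to $g=h$ in the direction (2)$\Rightarrow$(1), whereas the direction (1)$\Rightarrow$(2) only uses that $\phi$ is a homomorphism.
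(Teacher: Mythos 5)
Your argument is correct and is exactly the standard verification the paper has in mind: the paper states Lemma \ref{lemInjonAbel} as an ``easy and well-known fact'' without proof, and your kernel computation $\ker(\phi_{ab})=\phi^{-1}([H,H])/[G,G]$ together with the identity $\phi(\phi^{-1}(S))=\phi(G)\cap S$ supplies it cleanly. Your remark that injectivity of $\phi$ is needed only in the direction (2)$\Rightarrow$(1) is also accurate.
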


It allows us to prove

\begin{proposition}
 Let $k\geq 1$, $r\geq 2$ be integers, let $G_1,\cdots, G_r$ be finitely generated groups and let $\psi : G=G_1\times \cdots \times G_r \rightarrow \ZZ^k$ be an epimorphism. Assume further that (at least) one of the following two conditions is satisfied:
 \begin{enumerate}
  \item $k=1$ and $\ker (\psi)$ is subdirect in $G_1\times \cdots \times G_r$;
  \item the restriction of $\psi$ to $G_i$ surjects onto $\ZZ^k$ for at least three different $i\in \left\{1,\cdots, r\right\}$.
 \end{enumerate}
 
 Then the map $\psi$ induces a short exact sequence 
 \begin{equation}
  1\rightarrow (\ker (\psi))_{ab}\rightarrow (G_1\times \cdots \times G_r)_{ab}\rightarrow \ZZ^k\rightarrow 1
  \label{eqnSESab1}
 \end{equation}
 on abelianisations and in particular the following equality of first Betti numbers holds:
 \begin{equation}
  b_1(G)=k+ b_1(\ker (\psi)).
  \label{eqnBetti1}
 \end{equation}  
 \label{thmBettiZ}
\end{proposition}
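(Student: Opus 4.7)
Since $\ZZ^k$ is abelian, $[G,G]\subseteq\ker\psi$, and right-exactness of abelianisation applied to $\ker\psi\hookrightarrow G\twoheadrightarrow \ZZ^k$ already yields the exact sequence $(\ker\psi)_{ab}\to G_{ab}\to \ZZ^k\to 0$. The plan is to prove injectivity of $(\ker\psi)_{ab}\to G_{ab}$, from which the Betti number identity will follow by comparing ranks in the resulting short exact sequence. By Lemma \ref{lemInjonAbel}, this injectivity is equivalent to $[\ker\psi,\ker\psi]=[G,G]\cap\ker\psi$, which, using $[G,G]\subseteq\ker\psi$, reduces to the single non-trivial inclusion $[G,G]\subseteq[\ker\psi,\ker\psi]$. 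Exploiting $[G,G]=[G_1,G_1]\times\dots\times [G_r,G_r]$ and the commuting of distinct factors in $G$, I further reduce to showing that for every $i$ and $g_i,h_i\in G_i$ the element $\iota_i([g_i,h_i]):=(1,\dots,1,[g_i,h_i],1,\dots,1)$ (with $\iota_i:G_i\hookrightarrow G$ the inclusion at position $i$) lies in $[\ker\psi,\ker\psi]$. In both cases my plan is to realise this element as a single commutator $[u,v]$ of elements $u,v\in\ker\psi$ for which $u_i=g_i$, $v_i=h_i$, and $[u_j,v_j]=1$ for all $j\neq i$.

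Case (2) is straightforward. The hypothesis lets me pick two distinct indices $j_1,j_2\in\{1,\dots,r\}\setminus\{i\}$ for which both $\psi|_{G_{j_1}}$ and $\psi|_{G_{j_2}}$ are surjective. Choosing $a\in G_{j_1}$ and $b\in G_{j_2}$ with $\psi|_{G_{j_1}}(a)=-\psi|_{G_i}(g_i)$ and $\psi|_{G_{j_2}}(b)=-\psi|_{G_i}(h_i)$, I set $u=\iota_i(g_i)\iota_{j_1}(a)$ and $v=\iota_i(h_i)\iota_{j_2}(b)$. Both lie in $\ker\psi$, their supports meet only at position $i$, and so $[u,v]=\iota_i([g_i,h_i])$.

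Case (1) is the main obstacle. With $k=1$, let $m_j\geq 0$ denote the non-negative generator of $\psi|_{G_j}(G_j)=m_j\ZZ$. Surjectivity of $\psi$ gives $\gcd(m_1,\dots,m_r)=1$, and subdirectness of $\ker\psi$ forces $m_i\ZZ\subseteq\sum_{j\neq i}m_j\ZZ=d_i\ZZ$ with $d_i:=\gcd_{j\neq i}m_j$, so $d_i\mid m_i$; combined with $\gcd(d_i,m_i)=\gcd(m_1,\dots,m_r)=1$ this forces $d_i=1$ for every $i$. Hence I can choose $t=(t_1,\dots,t_r)\in G$ with $t_i=1$ and $\psi(t)=1$. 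Setting $u:=\iota_i(g_i)\cdot t^{-\psi|_{G_i}(g_i)}$ and $v:=\iota_i(h_i)\cdot t^{-\psi|_{G_i}(h_i)}$, a direct check gives $\psi(u)=\psi(v)=0$; the vanishing of $t_i$ keeps the $i$-th coordinates equal to $g_i$ and $h_i$, while for each $j\neq i$ the coordinates $u_j$ and $v_j$ are both powers of $t_j$, hence commute. Thus $[u,v]=\iota_i([g_i,h_i])$. The obstacle is precisely this case: subdirectness alone does not force any single $\psi|_{G_j}$ to be surjective when $r\geq 3$, so one cannot simply borrow a single correcting element from another factor as in case (2). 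The key point is to distill the weaker number-theoretic identity $\gcd_{j\neq i}(m_j)=1$ from subdirectness and then exploit that the coordinates of $t$ at positions $j\neq i$ generate cyclic subgroups, which forces the off-diagonal commutators to vanish.
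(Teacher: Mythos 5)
Your proof is correct and follows essentially the same route as the paper: reduce via Lemma \ref{lemInjonAbel} and right-exactness of abelianisation to the inclusion $[G_i,G_i]\subseteq[\ker\psi,\ker\psi]$, and realise each commutator $\iota_i([g_i,h_i])$ as a commutator of kernel elements whose coordinates away from position $i$ commute. The only (harmless) difference is in case (1): the paper corrects by powers of an element $\overline{g}_0$ of $G_2\times\dots\times G_r$ whose $\psi$-value generates the possibly proper subgroup $Q=\psi(\iota_1(\pi_1(\ker\psi)))\leq \ZZ$, whereas you first deduce from subdirectness and surjectivity that the factors other than $G_i$ already map onto $\ZZ$ and correct by powers of an element $t$ with $\psi(t)=1$; both variants rest on the same commuting-powers trick.
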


\begin{proof}
 We will first give a proof under the assumption that Condition (2) is satisfied and will then explain how to modify our proof if Condition (1) is satisfied. Assume that Condition (2) holds and that (without loss of generality) the restriction of $\psi$ to each of the first three factors is surjective.
 
 It is clear that exactness of \eqref{eqnSESab1} implies the equality \eqref{eqnBetti1} of Betti numbers. Hence, we only need to prove that the sequence \eqref{eqnSESab1} is exact. Abelianisation is a right exact functor from the category of groups to the category of abelian groups. Hence, it suffices to prove that the inclusion $\iota : \ker (\psi) \rightarrow G$ induces an injection $\iota _{ab} : (\ker (\psi))_{ab}\rightarrow G_{ab}$ of abelian groups.
 
 Since the image of $\psi$ is abelian, it follows that $\left[G,G\right]\leq \ker (\psi)$. We want to show that $\left[G,G\right]\leq \left[\ker(\psi),\ker(\psi)\right]$. Since $\left[G,G\right]=\left[G_1,G_1\right]\times \cdots \times \left[G_r,G_r\right]$ it suffices to show that $\left[G_i,G_i\right]\leq \left[\ker(\psi),\ker(\psi)\right]$ for $1\leq i \leq r$.
 
 We may assume that $i>2$, since for $i=1,2$ the same argument works after exchanging the roles of $G_i$ and $G_3$. Fix $x,y\in G_i$. Since the restrictions $\psi|_{G_j}:G_j\rightarrow \ZZ^k$ are surjective for $j=1,2$ we can choose elements $g_1\in G_1$ and $g_2 \in G_2$ with $\psi(g_1)=-\psi(x)$ and $\psi(g_2)=-\psi(y)$. 
 
 Then the elements $u:=g_1^{-1}\cdot x\in G$ and $v:=g_2^{-1}\cdot y\in G$ are in $\ker (\psi)$. Since $\left[G_i,G_j\right]=\left\{1\right\}$ for $i\neq j$ it follows that $\left[u,v\right]=\left[x,y\right]$. Thus, $\left[G_i,G_i\right]\leq \left[\ker(\psi),\ker(\psi)\right]$ for $1\leq i \leq r$. Consequently $\left[G,G\right]\leq \left[\ker(\psi),\ker(\psi)\right]$ and therefore by Lemma \ref{lemInjonAbel} the map $\iota_{ab}$ is injective.
 
 Now assume that Condition (1) holds. As before it suffices to prove that $\left[G_i,G_i\right]\leq \left[\ker (\psi),\ker(\psi)\right]$ for $1\leq i \leq r$. To simplify notation assume that $i=1$. If we can prove that there is some element $\overline{g}_0\in G_2\times \cdots \times G_r$ such that for any $x\in G_1$ there is an integer $k\in \ZZ$ with $x\cdot \overline{g}_0^k \in \ker (\psi)$ then the same argument as before will show that $\left[G_1,G_1\right]\leq \left[\ker (\psi),\ker(\psi)\right]$.
 
 Observe that we have the following equality of sets
 \begin{align*}
  Q &:=\left\{\psi(g_1,1)\mid (g_1,\overline{g})\in \ker (\psi)\leq G_1\times (G_2\times \cdots \times G_r)\right\}\\ &= \left\{\psi(1,\overline{g})\mid (g_1,\overline{g})\in \ker (\psi) \right\}\leq \ZZ.
 \end{align*}
  
 The set $Q$ is a subgroup of $\ZZ$, since it is the image of the group $\ker (\psi)$ under the homomorphism $\psi\circ \iota_1\circ \pi_1:G\rightarrow \ZZ$, where $\pi_1:G\rightarrow G_1$ is the canonical projection and $\iota_1: G_1 \rightarrow G$ is the canonical inclusion. Let $\overline{g}_0\in G_2\times \cdots \times G_r$ be an element such that $\psi(1,\overline{g}_0)=l_0$ generates $Q$.
 
 Since $\ker (\psi)$ is subdirect, for any $g_1, g_2 \in G_1$ there are elements $\overline{g}_1,\overline{g}_2 \in G_2 \times \cdots \times G_r$ such that $(g_1,\overline{g}_1),(g_2,\overline{g}_2)\in \ker(\psi)$ and therefore $\psi(g_1,1)=k_1\cdot l_0\cdot \psi(g_2,1)=k_2\cdot l_0 \in Q$. It follows that $(g_1,(\overline{g}_0)^{-k_1}), (g_2,(\overline{g}_0)^{-k_2})\in \ker (\psi)$. Thus $\left[g_1,g_2\right] \in \left[\ker (\psi),\ker (\psi)\right]$, completing the proof.
\end{proof}

As a direct consequence, we obtain a constraint on K\"ahler groups

\begin{corollary}
 Let $r,k\geq 1$ be integers, let $G_1,\cdots, G_r$ be finitely generated groups and let $\psi :G_1\times \cdots \times G_r\rightarrow \ZZ^k$ be an epimorphism satisfying one of the Conditions (1) or (2) in Theorem \ref{thmBettiZ}. If $b_1(G_1)+\cdots + b_1(G_r) - k$ is odd then $\ker (\psi)$ is not K\"ahler.
 \label{corBettiZ}
\end{corollary}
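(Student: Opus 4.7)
The plan is to derive the corollary as an immediate consequence of Proposition~\ref{thmBettiZ} combined with the classical fact that the first Betti number of a K\"ahler group is even (this is the cohomological consequence of Hodge theory already invoked implicitly in Lemma~\ref{lemHodgeHol}).

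First I would note that for a direct product of finitely generated groups, the abelianisation splits as a direct sum, so
\[
b_1(G_1\times \cdots \times G_r) = b_1(G_1) + \cdots + b_1(G_r).
\]
Then, under either of Conditions (1) or (2), Proposition~\ref{thmBettiZ} gives the short exact sequence
\[
1\to (\ker\psi)_{ab}\to (G_1\times \cdots \times G_r)_{ab}\to \ZZ^k\to 1
\]
and hence the numerical identity $b_1(G_1\times\cdots\times G_r)=k+b_1(\ker\psi)$. Rearranging yields
\[
b_1(\ker\psi) = b_1(G_1)+\cdots+b_1(G_r)-k.
\]

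By hypothesis this integer is odd, so $b_1(\ker\psi)$ is odd. Since the first Betti number of any K\"ahler group is even (the odd-degree Betti numbers of a compact K\"ahler manifold are even by the Hodge decomposition, and $b_1$ is a group-theoretic invariant of the fundamental group), the group $\ker\psi$ cannot be K\"ahler.

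There is essentially no obstacle here beyond correctly invoking Proposition~\ref{thmBettiZ}; the only care needed is to ensure that one of its two hypotheses applies in any given instance, and that the statement of the corollary merely requires that at least one of them holds, which is exactly what we assume. Thus the proof reduces to these two lines of bookkeeping with Betti numbers and the standard parity constraint on K\"ahler groups.
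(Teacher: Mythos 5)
Your argument is correct and is exactly the paper's proof: apply Proposition \ref{thmBettiZ} to get $b_1(\ker\psi)=b_1(G_1)+\cdots+b_1(G_r)-k$, which is odd by hypothesis, and conclude via the evenness of the first Betti number of K\"ahler groups. No differences in substance.
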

\begin{proof}
 By Proposition \ref{thmBettiZ} the first Betti number of $\ker (\psi)$ is equal to $b_1(G_1)+\cdots + b_1(G_r) - k$ and therefore odd. Hence, $\ker (\psi)$ can not be K\"ahler.
\end{proof}

Corollary \ref{corBettiZ} provides us with an elementary proof of Corollary \ref{corNewCoabExs} in two special cases.
\begin{corollary}
 Let $\psi: \G_{g_1,\mm_1}\times \dots \times \G_{g_r,\mm_r}\rightarrow \ZZ$ be a non-trivial homomorphism with subdirect kernel. Then $\ker (\psi)$ has odd first Betti number and in particular is not K\"ahler.
\end{corollary}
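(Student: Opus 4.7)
The plan is to reduce to the subdirect case and then apply Corollary \ref{corBettiZ} with $k=1$ via Condition (1) of Proposition \ref{thmBettiZ}. Since $\mathrm{Im}(\psi)$ is a nontrivial subgroup of $\ZZ$, I first replace $\psi$ by the unique surjection $G\twoheadrightarrow\ZZ$ with the same kernel, so we may assume $\psi$ is surjective. Writing $\psi_i := \psi|_{\G_{g_i}}$ and $S := \{i : \psi_i \neq 0\}$, I will split into cases according to $|S|$.

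If $|S| \leq 1$, say (after reordering) $S \subseteq \{1\}$, then $\ker\psi$ contains the subgroup $\ker\psi_1 \times \G_{g_2} \times \cdots \times \G_{g_r}$ (with equality when $|S|=1$, and otherwise $\psi$ is trivial). Because $\G_{g_1}$ is a nonelementary hyperbolic surface group, any infinite-index normal subgroup of it is an infinitely generated free group; so $b_1(\ker\psi_1)=\infty$, hence $b_1(\ker\psi)=\infty$, and in particular $\ker\psi$ is not finitely generated and thus not K\"ahler.

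If $|S| \geq 2$, I set $H_i := p_i(\ker\psi)$ and note that a direct calculation gives $H_i = \psi_i^{-1}\bigl(\sum_{j\neq i}\mathrm{Im}(\psi_j)\bigr)$. The assumption $|S|\geq 2$ guarantees $\sum_{j\neq i}\mathrm{Im}(\psi_j)$ is a nontrivial subgroup of $\ZZ$ for every $i$, forcing $H_i$ to have finite index in $\G_{g_i}$ and hence to be itself a closed hyperbolic surface group $\G_{h_i}$ with $b_1(H_i)=2h_i$ (when $\psi_i=0$ one simply has $H_i=\G_{g_i}$). By construction $\ker\psi$ is subdirect in $H_1\times\cdots\times H_r$, and after composing the restriction $\psi|_{H_1\times\cdots\times H_r}$ with the canonical isomorphism of its image with $\ZZ$, one obtains an epimorphism $\bar\psi: H_1\times\cdots\times H_r\to\ZZ$ with $\ker\bar\psi=\ker\psi$ which fits into Condition (1) of Proposition \ref{thmBettiZ}. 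Applying Corollary \ref{corBettiZ} then yields
\[
b_1(\ker\psi) \;=\; \sum_{i=1}^r b_1(H_i) \;-\; 1 \;=\; \Bigl(\sum_{i=1}^r 2 h_i\Bigr) - 1,
\]
which is odd; hence $\ker\psi$ is not K\"ahler.

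The main obstacle is the case $|S|\geq 2$: specifically, verifying that each $H_i=p_i(\ker\psi)$ has finite index in $\G_{g_i}$, so that one may legitimately invoke Condition (1) of Proposition \ref{thmBettiZ} for the finitely generated groups $H_i$. Both this finite-index assertion and the subdirectness of $\ker\psi$ in $\prod H_i$ follow from the explicit formula $H_i = \psi_i^{-1}(\sum_{j\neq i}\mathrm{Im}(\psi_j))$ combined with the standing hypothesis $|S|\geq 2$.
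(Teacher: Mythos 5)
Your proof is correct, and it uses the same key tool the paper has in mind: the paper gives no written proof for this corollary, presenting it as an immediate consequence of Corollary \ref{corBettiZ} with $k=1$. What you add is genuinely needed to make that quotation legitimate, since Condition (1) of Proposition \ref{thmBettiZ} requires $\ker\psi$ to be subdirect in $\G_{g_1}\times \dots \times \G_{g_r}$, and this can fail for a general non-trivial $\psi$ (for instance if $\psi|_{\G_{g_1}}$ is onto $\ZZ$ while $\psi|_{\G_{g_2}}$ has image $2\ZZ$ and the remaining restrictions vanish, then $p_1(\ker\psi)$ has index $2$ in $\G_{g_1}$). Your replacement of each factor by $H_i=p_i(\ker\psi)=\psi_i^{-1}\bigl(\sum_{j\neq i}\mathrm{Im}\,\psi_j\bigr)$, which for $|S|\geq 2$ is a finite-index surface subgroup with even first Betti number, makes $\ker\psi$ subdirect by construction and gives $b_1(\ker\psi)=\sum_i b_1(H_i)-1$; this number may differ from $b_1(G)-1$ (the correction terms are the even differences $b_1(H_i)-b_1(\G_{g_i})$), but the parity conclusion is unaffected, so the argument goes through. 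Your separate case $|S|\leq 1$ also exposes a small imprecision in the statement itself: if $\psi$ vanishes on all but one factor, then $\ker\psi=\ker\psi_1\times\G_{g_2}\times\dots\times\G_{g_r}$ with $\ker\psi_1$ an infinitely generated free group, so the first Betti number is infinite rather than odd; the non-K\"ahler conclusion of course persists, exactly as you observe.
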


\begin{corollary}
 Let $r\geq 3$ and let $\psi: \G_{g_1,\mm_1}\times \dots \times \G_{g_r,\mm_r}\rightarrow \ZZ^{2k+1}$ be a homomorphism such that $\psi|_{\G_{g_i,\mm_i}}$ is an epimorphism for $i=1,2,3$. Then $\ker (\psi)$ has odd first Betti number and in particular is not K\"ahler.
 \label{corOddb1}
\end{corollary}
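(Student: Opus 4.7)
The plan is to apply Corollary \ref{corBettiZ} directly. I will first verify that the hypotheses of Proposition \ref{thmBettiZ} are satisfied in Condition (2): since $\psi|_{\Gamma_{g_i}}$ is assumed to be an epimorphism onto $\ZZ^{2k+1}$ for $i=1,2,3$, we have the restriction of $\psi$ surjecting onto the full abelian image from at least three distinct factors, which is exactly what Condition (2) requires.

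Next, I will compute the first Betti number of $\ker \psi$ using the short exact sequence on abelianizations produced by Proposition \ref{thmBettiZ}. Since each surface group $\Gamma_{g_i}$ has $b_1(\Gamma_{g_i}) = 2g_i$, the identity
\[
b_1(\ker \psi) = b_1(\Gamma_{g_1}) + \cdots + b_1(\Gamma_{g_r}) - (2k+1) = 2g_1 + \cdots + 2g_r - (2k+1)
\]
shows immediately that $b_1(\ker \psi)$ is odd.

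Finally, since every K\"ahler group has even first Betti number (because $H^1(X,\CC) = H^{1,0}(X,\CC) \oplus H^{0,1}(X,\CC)$ with the two summands complex conjugate, as recalled in the proof of Lemma \ref{lemHodgeHol}), the group $\ker \psi$ cannot be K\"ahler. There is no real obstacle here: the statement is essentially an unpacking of Corollary \ref{corBettiZ} in the special case $G_i = \Gamma_{g_i}$, once one observes that the hypothesis on the three distinguished factors matches Condition (2) verbatim.
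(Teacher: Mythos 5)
Your proposal is correct and is exactly the argument the paper intends: Corollary \ref{corOddb1} is stated as a direct specialisation of Corollary \ref{corBettiZ} (via Condition (2) of Proposition \ref{thmBettiZ}), with $b_1(\ker\psi)=2g_1+\dots+2g_r-(2k+1)$ odd and hence non-K\"ahler by evenness of the first Betti number of K\"ahler groups. No discrepancy with the paper's reasoning.
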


As a consequence, we obtain:

\begin{corollary}
Let $H=\ker (\psi_1) \times \ker (\psi_2)$ be the product of the kernels of two epimorphisms $\psi_1,\psi_2$ satisfying the conditions in Corollary \ref{corOddb1}. Then $H$ is a full subdirect product of orbisurface groups, which is coabelian of even rank, has even first Betti number and is not K\"ahler.
\label{corExEvenB1}
\end{corollary}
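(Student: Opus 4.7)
The plan is to verify the four assertions about $H$ in sequence, concluding non-K\"ahlerness via Corollary \ref{corNewCoabExs2}. Throughout, write $D_i = \G_{g_1^{(i)}}\times\dots\times \G_{g_{r_i}^{(i)}}$ for the ambient direct product containing $\ker \psi_i$, so that $H$ sits inside $D_1 \times D_2$.

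First I would check that each $\ker \psi_i$ is a full subdirect product of surface groups. Subdirectness is a compensation argument: given any $x \in \G_{g_l^{(i)}}$, choose $m \in \{1,2,3\}\setminus \{l\}$ (non-empty since $|\{1,2,3\}|=3$) and use surjectivity of $\psi_i|_{\G_{g_m^{(i)}}}$ onto $\ZZ^{2k_i+1}$ to produce an element of $\G_{g_m^{(i)}}$ that cancels $\psi_i(x)$; the resulting element of $D_i$ lies in $\ker \psi_i$ and projects to $x$. Fullness is immediate, as any restriction of $\psi_i$ to a single factor is a homomorphism from a non-abelian surface group into an abelian group and therefore has non-trivial kernel. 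The product $H=\ker\psi_1\times \ker\psi_2$ inherits both properties inside $D_1\times D_2$. For the second claim, $H$ is by construction the kernel of the surjective homomorphism $\psi_1 \times \psi_2 : D_1\times D_2 \to \ZZ^{(2k_1+1)+(2k_2+1)} = \ZZ^{2(k_1+k_2+1)}$, so it is coabelian of even rank. For the third, Corollary \ref{corOddb1} gives that $b_1(\ker \psi_i)$ is odd for $i=1,2$, so $b_1(H)=b_1(\ker\psi_1)+b_1(\ker\psi_2)$ is even.

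The substantive remaining step is non-K\"ahlerness. I would apply Corollary \ref{corNewCoabExs2} with $G=\ker\psi_1$, which is coabelian of odd rank $2k_1+1$, and $G_1=\ker\psi_2$. What must be verified is that $\ker\psi_2$ is finitely presented. By a compensation argument entirely analogous to the one for subdirectness but carried out for pairs---given $(x,y)\in \G_{g_i^{(2)}}\times \G_{g_j^{(2)}}$, pick $m\in \{1,2,3\}\setminus\{i,j\}$, which is non-empty since $|\{1,2,3\}|=3$ while $|\{i,j\}|\leq 2$, and use surjectivity of $\psi_2|_{\G_{g_m^{(2)}}}$ to produce a compensating element in $\G_{g_m^{(2)}}$---one sees that $\ker\psi_2$ surjects onto every pair of factors, and in particular is VSP. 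Theorem A of \cite{BriHowMilSho-13}, recalled in this paper just before Theorem \ref{thmKochlVSPk}, then implies that $\ker\psi_2$ is finitely presented, so Corollary \ref{corNewCoabExs2} applies and yields that $H$ is not K\"ahler.

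The main obstacle is precisely this finite presentability verification for $\ker\psi_2$, but, as indicated, it reduces to the elementary combinatorial observation that for any pair of indices $i,j$ the set $\{1,2,3\}\setminus\{i,j\}$ is non-empty, together with the VSP--to--$\mathcal{F}_2$ implication of \cite{BriHowMilSho-13}. No deeper K\"ahler input is needed beyond invoking Corollary \ref{corNewCoabExs2}, which already packages the Hodge-theoretic parity obstruction proved earlier via Lemma \ref{corNewCoab} and Proposition \ref{propNewCoab}.
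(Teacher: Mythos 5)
Your proof is correct and follows essentially the same route as the paper, which deduces non-K\"ahlerness from Remark \ref{remNewCoabExs} (i.e.\ Corollary \ref{corNewCoabExs2}) and the remaining properties from Corollary \ref{corOddb1} together with the direct checks of full subdirectness and of the even coabelian rank of $\ker(\psi_1\times\psi_2)$. Your explicit verification that $\ker \psi_2$ surjects onto pairs and is therefore finitely presented by \cite[Theorem A]{BriHowMilSho-13} is precisely the hypothesis of Corollary \ref{corNewCoabExs2} that the paper leaves implicit in Remark \ref{remNewCoabExs}.
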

\begin{proof}
 The group $H$ is not K\"ahler by Remark \ref{remNewCoabExs}. All other properties follow from Corollary \ref{corOddb1}.
\end{proof}

\begin{proof}[Proof of Corollary \ref{corExEvenB1Intro}]
This is an easy consequence of Corollary \ref{corExEvenB1}.
\end{proof}

\section{Finiteness properties and first Betti numbers}
\label{secFinPropBetti}

We will now show how the results of Section \ref{secSESCoab} can be used to prove that under the assumption of strong enough finiteness properties the projections in Theorem \ref{thmNewCoab} have a finite index subgroup with even first Betti number.
 
\begin{proposition}
\label{propVB1}
 Let $G=\pi_1 (X)$ and let $\phi: G\rightarrow \G_{g_1,\mm_1}\times \dots \times \G_{g_r,\mm_r}$ be a homomorphism that is induced by a holomorphic map $f: X\rightarrow S_{g_1,\mm_1}\times \dots \times S_{g_r,\mm_r}$ with full image $\overline{G}=\mathrm{im}~ (\phi)$ of type $\mathcal{F}_m$ for $m\geq \frac{2r}{3}$. Then $\overline{G}$ is virtually holomorphically coabelian and there is a finite index subgroup $\overline{G}_0\leq \overline{G}$ with even first Betti number.
\end{proposition}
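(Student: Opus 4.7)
The first assertion will follow directly from Proposition \ref{propNewCoab}, since $m\geq \frac{2r}{3}>\frac{r}{2}$. This gives finite index subgroups $\G_{h_i}\leq \G_{g_i}$ and $\overline{G}_0\leq \overline{G}$ together with an epimorphism $\psi:\G_{h_1}\times \dots \times \G_{h_r}\to \ZZ^{2l}$ such that $\overline{G}_0=\ker\psi$, for some $l\geq 0$. If $l=0$ then $\overline{G}_0$ is a full direct product of surface groups, so $b_1(\overline{G}_0)=2(h_1+\dots+h_r)$ is already even; I henceforth assume $l\geq 1$.

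For the second assertion, my plan is to show that the inclusion $\overline{G}_0\hookrightarrow \G_{h_1}\times \dots \times \G_{h_r}$ is injective on abelianisations. Together with right exactness of abelianisation, this will yield the short exact sequence
\[
1\to (\overline{G}_0)_{ab}\to \ZZ^{2(h_1+\dots+h_r)}\to \ZZ^{2l}\to 1,
\]
forcing $b_1(\overline{G}_0)=2(h_1+\dots+h_r)-2l$, which is even. The natural tool is Proposition \ref{thmBettiZ}(2); its hypothesis, however, requires $\psi$ to be surjective on at least three individual factors $\G_{h_i}$, and this is strictly stronger than what the inequality $m\geq \frac{2r}{3}$ gives. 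Getting around this is the main obstacle.

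The key observation is that the commutator computation in the proof of Proposition \ref{thmBettiZ}(2) carries over verbatim when the three ``surjecting'' factors are replaced by three pairwise disjoint sub-products of factors. Concretely, it suffices to exhibit pairwise disjoint subsets $J_1,J_2,J_3\subset \left\{1,\dots,r\right\}$ such that $\psi|_{\prod_{j\in J_t}\G_{h_j}}$ is surjective onto $\ZZ^{2l}$ for $t=1,2,3$. Given any index $i$ and any $x,y\in \G_{h_i}$, at least two of the $J_t$ avoid $i$; choosing $g_1,g_2$ in the corresponding sub-products with $\psi(g_1)=\psi(x)$ and $\psi(g_2)=\psi(y)$, the disjointness of the factor sets ensures that $g_1$ and $g_2$ commute with each other and with $x,y$. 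Then $u=g_1^{-1}x$ and $v=g_2^{-1}y$ lie in $\ker\psi$ with $[u,v]=[x,y]$, and Lemma \ref{lemInjonAbel} delivers the injectivity on abelianisations.

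The remaining step is to produce such $J_1,J_2,J_3$, and this is where the hypothesis $m\geq \frac{2r}{3}$ enters. Theorem \ref{thmKochlVSPk} tells us that $\overline{G}_0$ virtually surjects onto $m$-tuples in $\G_{h_1}\times \dots \times \G_{h_r}$; by a routine finite-index manoeuvre -- shrinking each $\G_{h_i}$ and replacing $\overline{G}_0$ by the corresponding intersection -- I may arrange that this surjection is actual on every $m$-tuple simultaneously. Lemma \ref{lemSurjComp} then shows that $\psi$ is surjective on any $r-m$ factors. Since $3(r-m)\leq r$ is equivalent to $m\geq \frac{2r}{3}$, I can carve three pairwise disjoint subsets $J_1,J_2,J_3$ of size $r-m$ out of $\left\{1,\dots,r\right\}$, each giving a surjecting sub-product of $\psi$, which closes the argument.
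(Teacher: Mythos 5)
Your overall plan coincides with the paper's: the first assertion via Proposition \ref{propNewCoab}, and evenness of $b_1$ via the commutator argument of Proposition \ref{thmBettiZ}(2) run with three pairwise disjoint blocks of size $r-m$ (possible exactly because $3(r-m)\leq r$). The block version of the commutator computation is fine. The genuine gap is the ``routine finite-index manoeuvre''. For your argument you need that, after shrinking, $\psi$ maps each of the three blocks onto the whole image; writing $B_i=\psi(\G'_{h_i})$, this forces every $(r-m)$-wise sum $\sum_{j\in J}B_j$ to equal $\sum_{i=1}^r B_i$. Shrinking the factors one at a time only lets you replace each $B_i$ by an arbitrary finite-index subgroup of $A_i=\psi(\G_{h_i})$; you cannot control the sums. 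At the level of generality at which you invoke the claim (type $\mathcal{F}_m$ plus Theorem \ref{thmKochlVSPk} and Lemma \ref{lemSurjComp}) it is in fact false: take $r=6$, $m=4$ and an epimorphism $\psi:\G_{h_1}\times\dots\times\G_{h_6}\to\ZZ^2$ whose restriction to the $i$-th factor has image the rank-one lattice $A_i=\ZZ v_i$ with $v_1=(1,0)$, $v_2=(0,1)$, $v_3=(1,1)$, $v_4=(1,-1)$, $v_5=(1,2)$, $v_6=(2,1)$. Any two of these vectors are independent, so every pairwise sum $A_i+A_j$ has finite index in $\ZZ^2$; hence $\ker\psi$ is a full subdirect product which virtually surjects onto $4$-tuples and, being coabelian, is of type $\mathcal{F}_4=\mathcal{F}_m$. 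But any finite-index subgroups are of the form $B_i=c_iA_i$, and the index $[\ZZ^2:B_i+B_j]=c_ic_j\left|\det(v_i,v_j)\right|$ would have to be the same for all pairs; the pairs $(1,5),(1,6)$ give $c_6=2c_5$ while $(2,5),(2,6)$ give $c_5=2c_6$, a contradiction. So no factor-wise shrinking makes the new kernel surject onto all $m$-tuples, and this step cannot be dismissed as routine.

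The paper gets around exactly this point by passing to finite index \emph{block-wise} rather than factor-wise. It only uses Lemma \ref{lemSurjComp} to know that each block image $A_i=\psi(P_i)$ has finite index in $\ZZ^{2l}$, sets $A=A_1\cap A_2\cap A_3$, and replaces each whole block by $P_{i,0}=\psi^{-1}(A)\cap P_i$; these are finite-index subgroups of the blocks but are \emph{not} direct products of subgroups of the individual surface groups, and by construction $\psi(P_{i,0})=A$ exactly. Then Proposition \ref{thmBettiZ}(2) applies verbatim to $P_{1,0}\times P_{2,0}\times P_{3,0}\to A$, giving $b_1(\overline{G}_0)=b_1(P_{1,0})+b_1(P_{2,0})+b_1(P_{3,0})-2l$, and each $b_1(P_{i,0})$ is even because $P_{i,0}$ is a finite-index subgroup of the K\"ahler group $P_i$ (here one cannot just quote $b_1$ of a product of surface groups, since $P_{i,0}$ is not such a product). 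If you replace your factor-wise shrinking by this block-wise pullback, the rest of your argument goes through and becomes the paper's proof.
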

\begin{proof}
It follows from Proposition \ref{propNewCoab} that $\overline{G}$ is virtually holomorphically coabelian. Let $\G_{h_i}\leq \G_{g_i,\mm_i}$ be finite index surface subgroups, $1\leq i \leq r$ and $\phi: \G_{h_1}\times \dots \times \G_{h_r}\rightarrow \ZZ^{2N}=:Q$ be an epimorphism such that $\ker (\phi) \leq \overline{G}$ is a finite index subgroup. 

By Lemma \ref{lemSurjComp}, the group $\phi\left(p_{i_1,\dots, i_{r-m}}(\overline{G})\right)  \leq Q$ is a finite index subgroup for all $1\leq i_1 < \dots < i_{r-m}\leq r$. Since $m\geq \frac{2r}{3}$, there is a partition of $\left\{1,\dots, r\right\}$ into three disjoint subsets $B_1=\left\{j_0=1,\dots, j_1\right\}$, $B_2=\left\{j_1+1,\dots, j_2\right\}$ and $B_3= \left\{j_2+1,\dots, j_3=r\right\}$ such that $|B_i|\geq r-m$. Thus, $A_i:= \phi\left(p_{B_i}(\overline{G})\right)\leq Q$ is a finite index subgroup. 

Define $P_i = \G_{h_{j_{i-1}}}\times \dots \times \G_{h_{j_i}}$. Consider the finite index subgroup $A=A_1\cap A_2 \cap A_3\leq \ZZ^{2l}$ and define finite index subgrups $P_{i,0}=\phi^{-1}(A)\cap P_i \leq P_i$. Consider the restriction $\overline{\phi}: P=P_{1,0}\times P_{2,0}\times P_{3,0}\rightarrow A$ of $\phi$ to the finite index subgroup $P\leq \G_{h_1}\times \dots \times \G_{h_r}$. 

Since $\phi(P_i)=A_i\geq A$, we have $\overline{\phi}(P_{i,0})=A$. Thus, the projection of $\overline{G}_0 = \ker (\overline{\phi})$ onto the $P_{i,0}$ is surjective and we can therefore apply Proposition \ref{thmBettiZ}. Hence, the induced homomorphism $\left(\ker (\overline{\phi})\right)_{ab}= (\overline{G}_0)_{ab}\rightarrow P_{ab}$ is injective and
\[
 b_1(\overline{G}_0) = b_1(P)- 2N= b_1(P_{1,0})+b_1(P_{2,0})+b_1(P_{3,0})-2N.
\]
However, $b_1(P_{i,0})$ is even, because $P_{i,0}$ is a finite index subgroup of the K\"ahler group $P_i$ for $i=1,2,3$. Thus, we obtain
\[
b_1(\overline{G}_0)\equiv 0 ~ \mathrm{ mod } ~ 2
\]
for the finite index subgroup $\overline{G}_0\leq \overline{G}$.
\end{proof}

\begin{corollary}
 With the notation of Theorem \ref{thmIntro1Orbi}, assume that the group $\overline{G}$ has finiteness type $\mathcal{F}_m$. Then the projections onto $k\leq \frac{3m}{2}$ factors with indices $1\leq i_1 < \dots < i_k\leq s$ have a finite index subgroup with even first Betti number. 
 
 If, moreover, $G=\overline{G}$ is a subgroup of a direct product of surface groups and $s\leq \frac{3m}{2}$, then $r-s$ is even.
\end{corollary}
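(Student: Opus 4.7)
The plan is to derive Part 1 by applying Proposition \ref{propVB1} to each projection $p_{i_1,\ldots,i_k}(\overline{G})$, and then to combine Part 1 with the central extension supplied by Theorem \ref{thmNewCoab} to prove Part 2.

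For Part 1 I would fix indices $1\leq i_1<\dots<i_k\leq s$ with $k\leq\frac{3m}{2}$. The proof of Theorem \ref{thmNewCoab} already furnishes a holomorphic map $f:X\to S_{g_1}\times\dots\times S_{g_s}$ inducing $p_{1,\ldots,s}\circ\phi$; composing $f$ with the coordinate projection to $S_{g_{i_1}}\times\dots\times S_{g_{i_k}}$ shows that $p_{i_1,\ldots,i_k}\circ\phi:G\to p_{i_1,\ldots,i_k}(\overline{G})$ is induced by a holomorphic map. Fullness of $p_{i_1,\ldots,i_k}(\overline{G})$ in $\G_{g_{i_1}}\times\dots\times\G_{g_{i_k}}$ transfers from that of $\overline{G}$ by projecting any witness supported on an $i_j$-th slot, and the projection is of type $\mathcal{F}_m$ by Corollary \ref{corFinPropsProjFactors} (since $k\leq\frac{3m}{2}<2m$). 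As $k\leq\frac{3m}{2}$ is equivalent to $m\geq\frac{2k}{3}$, Proposition \ref{propVB1} applies and produces a finite index subgroup of $p_{i_1,\ldots,i_k}(\overline{G})$ with even first Betti number.

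For Part 2 I would assume $G=\overline{G}$ and $s\leq\frac{3m}{2}$. Theorem \ref{thmNewCoab} provides a central extension $1\to\mathrm{Z}(\overline{G})\to\overline{G}\to p_{1,\ldots,s}(\overline{G})\to 1$ with $\mathrm{Z}(\overline{G})=\overline{G}\cap(\G_{g_{s+1}}\times\dots\times\G_{g_r})\cong\ZZ^{r-s}$. Using Part 1 with $k=s$, I pick a finite index subgroup $H'\leq p_{1,\ldots,s}(\overline{G})$ with $b_1(H')$ even, and set $\overline{G}_0:=p_{1,\ldots,s}^{-1}(H')\cap\overline{G}$, a finite index subgroup of $\overline{G}=\pi_1 X$. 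Then $\overline{G}_0$ is the fundamental group of a finite K\"ahler cover of $X$, so $b_1(\overline{G}_0)$ is even, and the extension restricts to $1\to Z_0\to\overline{G}_0\to H'\to 1$ with $Z_0=\overline{G}_0\cap(\G_{g_{s+1}}\times\dots\times\G_{g_r})$ still of rank $r-s$.

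The crux is showing that the right-exact sequence of abelianisations $Z_0\to\overline{G}_0^{ab}\to(H')^{ab}\to 0$ is actually short exact. The homomorphism $p_{s+1,\ldots,r}:\overline{G}_0\to\ZZ^{r-s}$ factors through $\overline{G}_0^{ab}$, and its composition with $Z_0\to\overline{G}_0^{ab}$ is the identity embedding $Z_0\hookrightarrow p_{s+1,\ldots,r}(\overline{G}_0)\leq\ZZ^{r-s}$, which is manifestly injective; hence $Z_0\to\overline{G}_0^{ab}$ is itself injective and $b_1(\overline{G}_0)=(r-s)+b_1(H')$. Since both Betti numbers on the left are even, so is $r-s$. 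The only technical point is this injectivity verification; everything else is a formal assembly of Theorem \ref{thmNewCoab} and Proposition \ref{propVB1}.
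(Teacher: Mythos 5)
Your argument is correct, and it splits naturally into two halves. Part 1 is essentially the paper's own proof: the paper simply invokes Proposition \ref{propVB1} together with the fact that the homomorphism in Theorem \ref{thmNewCoab} is (virtually) induced by a holomorphic map, which is exactly your assembly via Corollary \ref{corFinPropsProjFactors} and Proposition \ref{propVB1}; the one precision worth adding is that Corollary \ref{corFinPropsProjFactors} should be applied to the full subdirect product $p_{1,\dots,s}(\overline{G})\leq \G_{g_1}\times\dots\times\G_{g_s}$, which is of type $\mathcal{F}_m$ by the proof of Theorem \ref{thmNewCoab}, rather than to $\overline{G}$ itself (whose last $r-s$ coordinate projections are cyclic) -- this is also how the paper uses it. Part 2 takes a genuinely different, and perfectly valid, route. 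The paper passes to the finite index subgroup $G\cap\left(\G_{g_1}\times\dots\times\G_{g_s}\times \mathrm{Z}(G)\right)$, which splits as a direct product $G_0\times\ZZ^{r-s}$, finds a finite index subgroup $G_1\leq G_0$ with even first Betti number, and concludes from $b_1(G_1\times\ZZ^{r-s})=b_1(G_1)+(r-s)$ and the K\"ahlerness of $G_1\times\ZZ^{r-s}$. You avoid the splitting altogether: you pull back an even-$b_1$ finite index subgroup $H'\leq p_{1,\dots,s}(\overline{G})$ to $\overline{G}_0\leq\overline{G}$ and show the right-exact abelianization sequence of the central extension $1\to Z_0\to\overline{G}_0\to H'\to 1$ is in fact short exact, the injectivity of $Z_0\to(\overline{G}_0)_{ab}$ following because its composition with the map induced by $p_{s+1,\dots,r}$ is the inclusion $Z_0\hookrightarrow\ZZ^{r-s}$; this is correct and is very much in the spirit of Lemma \ref{lemInjonAbel} and Proposition \ref{thmBettiZ}. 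What each approach buys: the paper's direct-product decomposition makes the Betti number count immediate but requires both the splitting and locating the even-$b_1$ subgroup inside $G_0$ (rather than merely inside $p_{1,\dots,s}(\overline{G})$, which needs a word of justification via Proposition \ref{propVB1}); your version sidesteps both issues at the modest cost of the explicit injectivity check on abelianizations, arguably giving a cleaner derivation of $b_1(\overline{G}_0)=(r-s)+b_1(H')$.
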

\begin{proof}
The first part is an immediate consequence of Proposition \ref{propVB1} and the fact that the homomorphism in Theorem \ref{thmIntro1Orbi} is virtually induced by a holomorphic map.

For the second part, observe that this puts us in the setting of Theorem \ref{thmNewCoab}. Thus, after possibly passing to the finite index subgroup $G\cap \left(\G_{g_1}\times \dots \times \G_{g_s} \times \mathrm{Z}(G)\right)\leq G$, we may assume that $G$ is a direct product $G\cong G_0 \times \ZZ^{r-s}$ with $G_0\leq \G_{g_1}\times \dots \times \G_{g_s}$ full subdirect. Since $s\leq \frac{3m}{2}$, $G_0$ has a finite index subgroup $G_1\leq G_0$ with even first Betti number. The group $G_1 \times \ZZ^{r-s}$ is K\"ahler, so it also has even first Betti number. Therefore, $r-s$ is even.
\end{proof}

\section{Universal homomorphism}
\label{secConsGens}

Delzant \cite[Theorem 2]{Del-08} and Corlette--Simpson \cite[Proposition 2.8]{CorSim-08} proved that for a K\"ahler group $G$ there is a finite number of Riemann orbisurfaces $S_{g_i,\mm_i}$ such that every epimorphism from G onto a surface group factors through one of the $\pi_1 ^{orb} (S_{g_i,\mm_i})$. It is not difficult to see that their result can be stated as follows:

\begin{theorem}
 Let $X$ be a compact K\"ahler manifold and let $G=\pi_1 (X)$ be its fundamental group. Then there is $r\geq 0$ and closed hyperbolic Riemann orbisurfaces $S_{g_i,\mm_i}$ of genus $g_i \geq 2$ together with surjective holomorphic maps $f_i : X \rightarrow S_{g_i,\mm_i}$ with connected fibres, $1\leq i \leq r$, such that
 \begin{enumerate}
  \item the induced homomorphisms $f_{i,\ast} : G \rightarrow \pi_1 ^{orb} (S_{g_i,\mm_i})$ are surjective with finitely generated kernel for $1\leq i \leq r$;
  \item the image of $\phi:= (f_{1,\ast}, \dots, f_{r,\ast}): G \rightarrow  \pi_1^{orb} (S_{g_1, \mm_1}) \times \dots \times \pi_1^{orb} (S_{g_r,\mm_r})$ is full subdirect; and
  \item every epimorphism $\psi: G\rightarrow \pi_1^{orb} (S_{h,\nn})$ onto a fundamental group of a closed hyperbolic Riemann orbisurface of genus $h\geq 2$ and multiplicities $\nn$ factors through $\phi$.
 \end{enumerate}
 \label{thmDelCorSim}
\end{theorem}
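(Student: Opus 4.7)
The plan is to derive the statement by combining the finiteness theorem of Delzant and Corlette--Simpson (cited just above the statement) with the geometrization result recorded as Theorem~\ref{lemSiuBeauCat}. First, I would apply the Delzant/Corlette--Simpson theorem to extract finitely many epimorphisms $\psi_i \colon G \twoheadrightarrow \pi_1^{orb} S_{g_i',\mm_i'}$, $1 \leq i \leq r$, onto hyperbolic Riemann orbisurface groups with the property that every epimorphism from $G$ onto such an orbisurface group factors through one of the $\psi_i$. This is essentially the content of property~(3), with the $\psi_i$ playing the role of the $f_{i,*}$.

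Second, I would geometrize each $\psi_i$ using Theorem~\ref{lemSiuBeauCat}: $\psi_i$ either already has finitely generated kernel and is induced by a holomorphic map to some $S_{g_i,\mm_i}$, or it factors through such an epimorphism $f_{i,*}$. In the latter case, replacing $\psi_i$ by $f_{i,*}$ only refines the factorization (anything factoring through $\psi_i$ still factors through $f_{i,*}$), so after this modification we may assume $\psi_i = f_{i,*}$ is induced by a holomorphic map $f_i\colon X \to S_{g_i,\mm_i}$ with connected fibres and finitely generated kernel. This yields property~(1). I would then discard redundant maps, keeping only those $f_{i,*}$ whose kernel is not contained in any other $\ker f_{j,*}$. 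The resulting collection remains universal, and subdirectness of $\phi(G)$ is immediate because each $f_{i,*}$ remains surjective.

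The main obstacle is verifying fullness in~(2). If one had $\phi(G) \cap \pi_1^{orb} S_{g_i,\mm_i} = 1$ (regarding the $i$-th factor as a subgroup of the product), then the projection $\phi(G) \to \prod_{j \neq i} \pi_1^{orb} S_{g_j,\mm_j}$ would be injective, so $f_{i,*}$ would factor through $(f_{j,*})_{j \neq i}$, composed with an epimorphism from a subdirect subgroup of $\prod_{j \neq i} \pi_1^{orb} S_{g_j,\mm_j}$ onto $\pi_1^{orb} S_{g_i,\mm_i}$. The hard step is then to invoke a rigidity property of hyperbolic orbisurface group targets: any epimorphism from a full subdirect product of hyperbolic orbisurface groups onto a hyperbolic orbisurface group must factor through projection to a single factor, up to a virtual automorphism of the target. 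Combined with the minimality arranged in the previous step, this would force $f_{i,*}$ to be equivalent to some $f_{j,*}$, yielding the desired contradiction and establishing fullness.

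I expect the assembly in the first two steps to be essentially bookkeeping, whereas the rigidity input required for fullness is the technical crux; if the rigidity statement is not available in the literature in exactly the required form, one may instead need to redo the Delzant/Corlette--Simpson selection procedure with fullness built in from the start, for instance by choosing a collection of $\psi_i$ that is minimal not only with respect to factorization but also with respect to the dimension of the image in the product.
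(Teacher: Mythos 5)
Your assembly of items (1) and (3) --- first the Delzant/Corlette--Simpson finiteness theorem to extract a finite universal family of epimorphisms onto hyperbolic orbisurface groups, then Theorem \ref{lemSiuBeauCat} to replace each of these by an epimorphism with finitely generated kernel induced by a surjective holomorphic map with connected fibres --- is exactly the derivation the paper intends (the paper records no further argument, presenting the theorem as a restatement of the cited results), and subdirectness of the image is indeed immediate. You also correctly identify fullness in (2) as the crux, but your treatment of it does not work as written, so this is where the genuine gap lies. First, discarding maps whose kernel is contained in another kernel only rules out the pairwise containments $\ker f_{j,\ast}\subseteq \ker f_{i,\ast}$, whereas fullness at the index $i$ requires $\bigcap_{j\neq i}\ker f_{j,\ast}\not\subseteq \ker f_{i,\ast}$, which is strictly stronger. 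Second, the rigidity statement you propose to invoke is formulated for \emph{full} subdirect products, but in your contradiction setup the image $Q$ of $(f_{j,\ast})_{j\neq i}$ is only known to be subdirect --- fullness of such images is precisely what is being proved --- so the appeal is circular, and no such rigidity statement is proved or cited. Third, even granting a factorisation $f_{i,\ast}=\theta\circ f_{j,\ast}$ through a single coordinate, ``equivalent up to virtual automorphism'' is not yet a contradiction: you must show that $\theta$ is an isomorphism (for instance, $\ker\theta=f_{j,\ast}(\ker f_{i,\ast})$ is a finitely generated normal subgroup of a hyperbolic orbisurface group, hence trivial or of finite index, and finite index is impossible) before concluding that the two fibrations are equivalent and contradicting your choice of representatives.

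A workable route to fullness, using only tools already in the paper, is the following. Take one representative per equivalence class of fibrations onto hyperbolic orbisurfaces. For inequivalent fibrations, a Stein factorisation argument (as in the proof of Proposition \ref{propUnivFinPresHol}) shows that $(f_i,f_j)\colon X\to S_{g_i,\mm_i}\times S_{g_j,\mm_j}$ is surjective, hence the image of $(f_{i,\ast},f_{j,\ast})$ has finite index in $\pi_1^{orb}S_{g_i,\mm_i}\times\pi_1^{orb}S_{g_j,\mm_j}$; thus $\phi(G)$ is subdirect and virtually surjects onto pairs. This is then upgraded to fullness group-theoretically, using that hyperbolic orbisurface groups have no nontrivial finite normal subgroups and that two infinite normal subgroups of such a group cannot commute elementwise: if $\phi(G)\cap\pi_1^{orb}S_{g_i,\mm_i}=1$, then $f_{i,\ast}$ descends to an epimorphism $\theta\colon Q\to\pi_1^{orb}S_{g_i,\mm_i}$, virtual surjection onto the pairs $\{j,i\}$ shows that $\theta$ maps each $\ker (p_j|_Q)$ onto a finite index subgroup, and an iterated commutator argument then produces two elementwise-commuting infinite normal subgroups of $\pi_1^{orb}S_{g_i,\mm_i}$, a contradiction. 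Some argument of this kind (or a direct geometric one) is needed; without it, item (2) of the theorem is not established by your proposal.
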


For a K\"ahler group $G$ we will call the homomorphism $\phi$ in Theorem \ref{thmDelCorSim} the \textit{universal homomorphism} (to a product of hyperbolic Riemann orbisurfaces), as it satisfies a universal property. 

\begin{lemma}
 Let $X$ be a compact K\"ahler manifold, let $G=\pi_1(X)$ be its fundamental group and let $\phi: G \rightarrow \pi_1^{orb} (S_{g_1,\mm_1}) \times \dots \times \pi_1 ^{orb} (S_{g_r,\mm_r})$ be the universal homomorphism to a product of orbisurface groups defined in Theorem \ref{thmDelCorSim}. 
 
 Then $\phi$ is induced by a holomorphic map $f: X\rightarrow S_{g_1,\mm_1} \times \dots \times S_{g_r,\mm_r}$ and the image $G:=\phi(G)\leq \pi_1^{orb} (S_{g_1,\mm_1}) \times \dots \times \pi_1^{orb} (S_{g_r,\mm_r})$ of $\phi$ is a finitely presented full subdirect product. 
 \label{lemUnivFinPres}
\end{lemma}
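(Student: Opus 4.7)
The plan is to verify in turn each of the three assertions: that $\phi$ is induced by a holomorphic map, that the image $\overline{G}$ is full subdirect, and that $\overline{G}$ is finitely presented. The first two will be almost immediate from Theorem \ref{thmDelCorSim}. I would set $f:=(f_1,\dots,f_r): X\to S_{g_1,\mm_1}\times\cdots\times S_{g_r,\mm_r}$ with the $f_i$ as in that theorem; then $f_{\ast}=\phi$ by construction, subdirectness follows from the surjectivity of each $f_{i,\ast}$ in clause (1), and fullness is clause (2).

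The substantive content is finite presentability of $\overline{G}$. My plan is to pass to a finite-index subgroup that sits in an honest product of surface groups and then invoke the Bridson--Howie--Miller--Short VSP criterion \cite[Theorem A]{BriHowMilSho-13}. Concretely, I would choose torsion-free finite-index surface subgroups $\Gamma_{h_i}\leq \pi_1^{orb}S_{g_i,\mm_i}$ with $h_i\geq 2$, set $\overline{G}_0:=\overline{G}\cap(\Gamma_{h_1}\times\cdots\times\Gamma_{h_r})$, and, after possibly replacing each $\Gamma_{h_i}$ by the finite-index surface subgroup $p_i(\overline{G}_0)$, arrange that $\overline{G}_0$ is a full subdirect finite-index subgroup of $\overline{G}$ in $\Gamma_{h_1}\times\cdots\times\Gamma_{h_r}$. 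It then suffices to prove that $\overline{G}_0$ virtually surjects onto pairs, since a finite extension of a finitely presented group is again finitely presented.

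To verify VSP I would fix indices $1\leq i<j\leq r$ and note that $p_{i,j}\circ\phi$ is induced by the holomorphic map $(f_i,f_j): X\to S_{g_i,\mm_i}\times S_{g_j,\mm_j}$, whose image has complex dimension two precisely when $p_{i,j}(\overline{G})$ is of finite index. Suppose for contradiction that the image is a curve $C$. Stein factorisation of the induced surjection $X\to C$ produces a smooth Riemann surface $Z$, a surjective holomorphic map $X\to Z$ with connected fibres, and a finite holomorphic map $Z\to C$; composing with the two coordinate projections yields finite holomorphic maps $\alpha: Z\to S_{g_i,\mm_i}$ and $\beta: Z\to S_{g_j,\mm_j}$ through which $f_i$ and $f_j$ respectively factor. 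Since the generic fibres of $f_i$ and $f_j$ are connected (Theorem \ref{thmDelCorSim}(1)) and $X\to Z$ already has connected fibres, both $\alpha$ and $\beta$ must have degree one, hence be biholomorphisms of Riemann surfaces; matching the multiplicities of singular fibres upgrades them to isomorphisms of orbisurfaces. This forces $f_i$ and $f_j$ to coincide up to a biholomorphism of the target, contradicting the fact that they are distinct factors of the universal decomposition.

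The main obstacle is precisely this last contradiction step: one must know that no two of the factors $f_i$ and $f_j$ can factor through a common non-trivial Riemann orbisurface. This is the intrinsic minimality built into the Delzant--Corlette--Simpson decomposition and is the genuine input that the proof needs; once it is available, the Stein factorisation argument closes cleanly and the BHMS criterion delivers finite presentability of $\overline{G}$ with no further work.
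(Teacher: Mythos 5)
Your overall strategy --- show that each pairwise holomorphic projection $(f_i,f_j)\colon X\to S_{g_i,\mm_i}\times S_{g_j,\mm_j}$ is surjective, deduce that $p_{i,j}(\overline{G})$ has finite index, and invoke \cite[Theorem A]{BriHowMilSho-13} --- can be made to work, but as written it has a gap exactly at the step you yourself flag. The ``intrinsic minimality'' you appeal to in order to rule out $f_j=\tau\circ f_i$ is not one of the stated clauses of Theorem \ref{thmDelCorSim}, and you leave it as an unproved external input. It does, however, follow from clause (2): since the orbifold structure $\mm_i$ is the one determined by the multiple fibres of $f_i$ (this is the content of Theorem \ref{lemSiuBeauCat}), a biholomorphism $\tau$ with $f_j=\tau\circ f_i$ matches cone points and multiplicities, hence induces an isomorphism $\tau_{\ast}$ with $f_{j,\ast}=\tau_{\ast}\circ f_{i,\ast}$; but then any element of $\overline{G}\cap\left(1\times\dots\times 1\times\pi_1^{orb}S_{g_j,\mm_j}\times 1\times\dots\times 1\right)$ has trivial $i$-th coordinate and therefore trivial $j$-th coordinate, so this intersection is trivial, contradicting fullness. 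You need to supply this (or an equivalent) argument; without it the contradiction does not close. A secondary unproved assertion is your ``precisely when'': that a two-dimensional image forces $p_{i,j}(\overline{G})$ to be of finite index. This is true but requires the usual care in the orbifold setting --- pass to the finite covers $X_0\to X$ and $R_{h_i}\to S_{g_i,\mm_i}$ corresponding to your surface subgroups, check that the lifted holomorphic map $X_0\to R_{h_i}\times R_{h_j}$ still has two-dimensional image, and apply Stein factorisation there --- exactly as the paper does in the proofs of Proposition \ref{propNewCoab} and Theorem \ref{thmExistRestFinPropsMT}.

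For comparison, the paper's proof of this lemma uses no holomorphic geometry at all. Writing $\G_k=\pi_1^{orb}S_{g_k,\mm_k}$, the finitely generated kernel of $p_i\circ\phi$ maps under $\phi$ onto $N_i=\overline{G}\cap\left(\G_1\times\dots\times\G_{i-1}\times 1\times\G_{i+1}\times\dots\times\G_r\right)$, so $N_i$ is a finitely generated normal subgroup of $\overline{G}$. For $i\neq j$, the projection $p_{i,j}(N_i)$ lies in $1\times\G_j$, is normal there by subdirectness of $p_{i,j}(\overline{G})$, and is non-trivial by fullness; since finitely generated normal subgroups of hyperbolic orbisurface groups are trivial or of finite index, $p_{i,j}(N_i)$ and hence $p_{i,j}(\overline{G})$ has finite index in $\G_i\times\G_j$. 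This gives VSP, and \cite[Theorem A]{BriHowMilSho-13} finishes the proof. Your Stein-factorisation argument is essentially the paper's proof of Proposition \ref{propUnivFinPresHol} run in reverse: there the surjectivity of pairwise projections is deduced from finite presentability, whereas you try to use it to establish finite presentability, which is precisely why you need the extra fullness input identified above, and why the paper's purely group-theoretic route is shorter.
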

\begin{proof}
To simplify notation denote by $\G_i:= \pi_1^{orb} (S_{g_i,\mm_i})$ the orbifold fundamental group of $S_{g_i,\mm_i}$ for $1\leq i \leq r$. The only part that is not immediate from Theorem \ref{thmDelCorSim} is that the image $\overline{G}$ of the restriction $\phi|_{G}$ is finitely presented. 

To see this, recall that by Theorem \ref{thmDelCorSim} the composition $p_i\circ \phi: G\rightarrow \pi_1^{orb} (S_{g_i,\mm_i})$ of $\phi$ with the projection $p_i$ onto $\G_i$ has finitely generated kernel. Hence, the kernel 
 \[
 N_i:=\ker (p_i|_{\overline{G}}) = \overline{G} \cap \left( \G_1 \times \dots \times \G_{i-1}\times 1 \times \G_{i+1} \times \dots \times \G_r\right)\unlhd \overline{G}
 \]
of the surjective restriction $p_i|_{\overline{G}} : \overline{G} \rightarrow \G_i$ is a finitely generated normal subgroup of $\overline{G}$. 

Finite presentability is trivial for $r=1$, so assume that $r\geq 2$. Let $1\leq i < j \leq r$. The image of the projection $p_{i,j}(\overline{G}) \leq \G_i \times \G_j$ is a full subdirect product and $p_{i,j}(N_i)\unlhd p_{i,j}(\overline{G})$ is a normal finitely generated subgroup. Since by definition $p_{i,j}(N_i)\leq 1\times \G_j$, it follows from subdirectness of $p_{i,j}(\overline{G})$ that in fact $p_{i,j}(N_i)\unlhd 1 \times \G_j$ is a normal finitely generated subgroup. 

The group $p_{i,j}(N_i)$ is either trivial or has finite index in $\G_j$, since all finitely generated normal subgroups of $\G_j=\pi_1^{orb} (S_{g_j,\mm_j})$ are either trivial or of finite index. The former is not possible, because $\overline{G}$ is full. It follows that $p_{i,j}(N_i)\unlhd 1 \times \G_j$ is a finite index subgroup. Thus, $p_{i,j}(\overline{G}) \leq \G_i \times \G_j$ is a finite index subgroup. Since $i$ and $j$ were arbitrary, we obtain that $\overline{G}$ has the VSP property. Thus, $\overline{G}$ is finitely presented by \cite[Theorem A]{BriHowMilSho-13}.
\end{proof}

Versions of the results of Sections \ref{secResCoabKGs} to \ref{secFinPropBetti} hold for the universal homomorphism of a K\"ahler group. This is because the universal homomorphism is induced by a holomorphic map. In particular, we obtain the following version of Theorem \ref{thmIntro1Orbi}.

\begin{theorem}
 For every K\"ahler group $G$ there are $r\geq 0$, closed orientable hyperbolic orbisurfaces $S_{g_i,\mm_i}$ of genus $g_i\geq 2$ and a homomorphism $\phi: G \rightarrow \pi_1^{orb} (S_{g_1,\mm_1})\times \dots \times \pi_1^{orb} (S_{g_r,\mm_r})$ with the universal properties described in Theorem \ref{thmDelCorSim}. Its image $\overline{G}=\phi(G)\leq \pi_1^{orb} (S_{g_1,\mm_1})\times \dots \times \pi_1^{orb} (S_{g_r,\mm_r})$ is a finitely presented full subdirect product. 
 
 Let $k\geq 0$ and $m\geq 2$ such that $m > \frac{k}{2}$. If $\overline{G}$ is of type $\mathcal{F}_m$ then, for every $1\leq i_1 < \dots < i_k \leq r$, the projection $p_{i_1,\dots,i_k}(\overline{G})\leq \pi_1^{orb} (S_{g_{i_1},\mm_{i_1}})\times \dots \times \pi_1^{orb} (S_{g_{i_k},\mm_{i_k}})$ has a finite index subgroup which is virtually holomorphically coabelian.
 \label{thmExistRestFinPropsMT}
\end{theorem}

\begin{proof}
  The assertion that $\overline{G}$ is finitely presented follows from Lemma \ref{lemUnivFinPres}. By Theorem \ref{thmDelCorSim} $\phi$ is induced by a holomorphic map. We can lift any such holomorphic map to a holomorphic map $f$ defining the restriction  $\phi|_{G_0}: G_0 \rightarrow \pi_1 (R_{\g_1})\times \dots \times \pi_1 (R_{\g_r})$, obtained by passing to finite index surface subgroups $\pi_1 (R_{\g_i})\leq \pi_1^{orb} (S_{g_i,\mm_i})$ and the finite index subgroup $G_0:= G \cap \phi^{-1}(\pi_1 (R_{\g_1})\times \dots \times \pi_1 (R_{\g_r}))$. The result now follows from Corollary \ref{corFinPropsProjFactors} and Proposition \ref{propNewCoab}.   
\end{proof}

\begin{remark}
 Note that since $\overline{G}$ is finitely presented, the consequences of Theorem \ref{thmExistRestFinPropsMT} always apply for $k=3$.
\end{remark}

Lemma \ref{lemUnivFinPres} and its proof raise the natural question if there is a geometric analogue of the VSP property. The following result shows that this is indeed the case.

\begin{proposition}
 Let $X$ be a compact K\"ahler manifold and let $G=\pi_1 (X)$. Let $\phi: G \to  \pi_1^{orb} (S_{g_1,\mm_1}) \times \cdots \times \pi_1^{orb} (S_{g_r,\mm_r})$ be a homomorphism with finitely presented full subdirect image such that the composition $p_i\circ \phi : G \to \pi_1^{orb} (S_{g_i,\mm_i})$ has finitely generated kernel for $1\leq i \leq r$.
 
 Then $\phi=f_{\ast}$ is realised by a holomorphic map $f=(f_1,\cdots,f_r): X \rightarrow S_{g_1,\mm_1} \times \cdots \times S_{g_r,\mm_r}$, for suitable complex structures on the $S_{g_i,\mm_i}$, and the holomorphic projection $f_{ij}=(f_i,f_j): X \rightarrow S_{g_i,\mm_i} \times S_{g_j,\mm_j}$ is surjective for $1\leq i < j \leq r$.
\label{propUnivFinPresHol}
\end{proposition}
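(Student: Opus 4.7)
The plan is to first produce the holomorphic map $f$ using Theorem~\ref{lemSiuBeauCat} and then to argue surjectivity of each $f_{ij}$ by contradiction, via a Stein factorisation of its corestriction to the image. For the first assertion, by hypothesis every composition $p_i\circ\phi: G\to\pi_1^{orb}S_{g_i,\mm_i}$ is a surjection with finitely generated kernel. Theorem~\ref{lemSiuBeauCat} therefore provides, for a suitable complex structure on $S_{g_i,\mm_i}$, an orbifold-holomorphic map $f_i: X\to S_{g_i,\mm_i}$ with connected fibres which induces $p_i\circ\phi$ on fundamental groups. The tuple $f=(f_1,\dots,f_r)$ is then holomorphic and realises $\phi$.

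For surjectivity of $f_{ij}=(f_i,f_j)$, fix $i<j$ and suppose for contradiction that $f_{ij}$ is not surjective. Since $X$ is irreducible, the image $V=f_{ij}(X)$ is an irreducible proper analytic subvariety of $S_{g_i,\mm_i}\times S_{g_j,\mm_j}$, of complex dimension $0$ or $1$. A $0$-dimensional image is excluded by subdirectness of $\overline G$: constancy of $f_{ij}$ would force the infinite subdirect image $p_{ij}(\overline G)$ to be trivial. Hence $V$ is an irreducible curve.

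Now apply Stein factorisation to the corestriction $X\to V$ to obtain $X\xrightarrow{\tilde f}W\xrightarrow{h}V\hookrightarrow S_{g_i,\mm_i}\times S_{g_j,\mm_j}$, where $W$ is a compact Riemann (orbi)surface (normality of the Stein factor forces $W$ to be smooth since $\dim V=1$), $\tilde f$ has connected fibres and $h$ is finite. Setting $q_i=p_i\circ h$ and $q_j=p_j\circ h$, the decompositions $f_i=q_i\circ\tilde f$ and $f_j=q_j\circ\tilde f$ combined with the connectedness of the fibres of $f_i,f_j,\tilde f$ force $q_i,q_j$ to have connected fibres. As non-constant holomorphic maps between compact Riemann orbisurfaces with connected fibres, they must therefore be (orbifold) isomorphisms (non-constancy follows since otherwise $f_{i,\ast}=p_i\circ\phi$ would have image $1\neq\G_i$). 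Consequently $V$ is the graph of the isomorphism $\alpha=q_j\circ q_i^{-1}: S_{g_i,\mm_i}\to S_{g_j,\mm_j}$, and on orbifold fundamental groups $h_\ast(\pi_1^{orb}W)=\{(g,\alpha_\ast g): g\in\G_i\}$ meets $1\times\G_j$ only in the identity. On the other hand, since $\tilde f_\ast$ is surjective, $h_\ast(\pi_1^{orb}W)=f_{ij,\ast}(\pi_1 X)=p_{ij}(\overline G)$, and fullness of $\overline G$ provides a non-trivial element of $\overline G\cap (1\times\dots\times\G_{g_j}\times\dots\times 1)$ whose image under $p_{ij}$ is a non-trivial element of $1\times\G_j$, contradicting the graph description. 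The main delicate point will be tracking the orbifold structures through the Stein factorisation --- specifically endowing $W$ with the orbifold structure that makes $q_i,q_j$ orbifold-holomorphic, and invoking the orbifold analogue of the fact that a non-constant holomorphic map between compact Riemann orbisurfaces with connected fibres is an isomorphism; everything else in the argument is formal.
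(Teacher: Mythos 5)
Your proposal is correct, and it follows the same skeleton as the paper (realise $\phi$ holomorphically via Theorem \ref{lemSiuBeauCat}, assume the image of $f_{ij}$ is a curve, Stein factorise, show the two induced maps from the Stein factor to the curve factors are isomorphisms, and derive a group-theoretic contradiction), but the second half is executed by genuinely different means. First, the paper immediately passes to finite covers $R_{\g_i}\to S_{g_i,\mm_i}$ and $X_0\to X$, so that all subsequent topology takes place with honest surface groups and no orbifold bookkeeping is needed; you stay with the orbisurfaces, and the ``delicate point'' you flag is real but does go through: giving $W$ the orbifold structure recorded by the multiple fibres of $\tilde f$, the degree-one maps $q_i,q_j$ transport this structure exactly onto the multiple-fibre data of $f_i,f_j$, which by Theorem \ref{lemSiuBeauCat} is precisely $\mm_i,\mm_j$, so $q_i,q_j$ are orbifold isomorphisms and $f_{ij,\ast}$ factors through $\pi_1^{orb}W$. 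Second, the mechanisms differ at two steps. To see that $q_i,q_j$ have degree one, the paper (which, after passing to the cover, can no longer use connectedness of fibres of the lifted maps) argues via surjectivity and finite generation of the kernels of $g_{i,\ast}$, using that finitely generated normal subgroups of surface groups are trivial or of finite index; you instead read degree one directly from the connectedness of the fibres of $f_i=q_i\circ\tilde f$ supplied by Theorem \ref{lemSiuBeauCat}, since a generic fibre of $f_i$ splits into $\deg q_i$ disjoint nonempty fibres of $\tilde f$ --- simpler, and it avoids the group theory at that point. For the final contradiction, the paper invokes finite presentability of $\overline G$ (via VSP) to get that $p_{ij}(\overline G)$ has finite index and hence contains $\ZZ^2$, contradicting injectivity of $(q_{i\ast},q_{j\ast})$ from a hyperbolic surface group; you use only fullness: the image lies in the graph of an isomorphism, which meets $1\times\G_{g_j}$ trivially, while fullness produces a non-trivial element there. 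Your route is thus slightly more elementary and in fact does not use finite presentability of the image at all in the surjectivity argument, at the cost of carrying the orbifold structures through the Stein factorisation rather than eliminating them by passing to covers as the paper does.
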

\begin{proof}
 Finite generation of $\ker(p_i\circ \phi)$ and Theorem \ref{lemSiuBeauCat} imply that $\phi$ is induced by a holomorphic map $f= (f_1,\cdots,f_r): X \rightarrow S_{g_1,\mm_1} \times \cdots \times S_{g_r,\mm_r}$. We now replace the $S_{g_i,\mm_i}$ by closed hyperbolic Riemann surfaces $R_i$, by passing to regular finite covers $R_{i}\rightarrow S_{g_i,\mm_i}$ and the induced finite-sheeted cover $X_0\rightarrow X$ satisfying that $\pi_1 (X_0) = f_{\ast}^{-1}\left((\pi_1 (R_{1})\times \dots \times \pi_1 (R_{r}) ) \cap f_{\ast}(\pi_1 (X))\right)$. Since the kernels of the epimorphisms $\pi_1 (X_0)\to \pi_1 (R_{\g_i})$ are finitely generated, the fibres of the induced holomorphic maps $h_i: X_0\to R_{\g_i}$ are connected.
 
Assume for a contradiction that there is $1\leq i < j \leq r$ such that the image $h_{ij}(X_0)\subset R_{i}\times R_{j}$ of the holomorphic map $h_{ij}=(h_i,h_j): X_0 \rightarrow R_{i}\times R_{j}$ is 1-dimensional.
  
Let $D_i$ (resp. $D_j$) be the finite set of singular values of $h_i$ (resp. $h_j$) and let $D= h_{ij}(X_0) \cap \left(D_i\times R_{j} \cup R_{i} \times D_j\right)$. Since $h_i$ and $h_j$ are both surjective, the set $D$ is finite and by definition all points in $h_{ij}(X_0)\setminus D$ are of the form $(x_i,x_j)$ with $x_i$ and $x_j$ regular values of $h_i$ and $h_j$. Let $(x_i,x_j)\in h_{ij}(X_0)\setminus D$. The map $h_{ij}$ has rank 1 in each regular point, implying that $\mathrm{d}h_i(y)$ and $\mathrm{d}h_j(y)$ are proportional for every point $y\in h_{ij}^{-1}(x_i,x_j)$. Since the fibres of $h_i$ and $h_j$ are connected it follows that $h_i^{-1}(x_i)=h_j^{-1}(x_j)$. We deduce that for $F=p_i(D)$ we have a holomorphic map $q: R_{i}\setminus F\to R_{j}$ such that the diagram 
\[
\xymatrix{ X_0 \setminus h_i^{-1}(F)\ar[d]_{h_i}\ar[rd]^{h_j}&\\ R_{i}\setminus F\ar[r]& R_{j}}
\]
commutes. Since $F$ is finite $q$ extends to a holomorphic map $q: R_{i}\to R_{j}$, which is biholomorphic by symmetry of the argument. In particular, the image $h_{ij,\ast}(\pi_1 (X_0))\leq \pi_1(R_{i})\times \pi_1(R_{j})$ is isomorphic to $\pi_1(R_{i})\cong\pi_1(R_{j})$. 

In contrast the VSP property and the fact that $\overline{G}$ is a finitely presented full subdirect product imply that $h_{ij,\ast}(\pi_1(X_0))\leq \pi_1(R_{i})\times \pi_1 (R_{j})$ is a finite index subgroup. This is a contradiction. It follows that $h_{ij}(X_0)$ is 2-dimensional for $1\leq i<j\leq r$.
\end{proof}
In a previous version we proved Proposition \ref{propUnivFinPresHol} using Stein factorization. We are grateful to the referee for providing us with the simpler proof given above.

\begin{remark}
We have seen in the proof of Theorem \ref{thmIntro1Orbi} in Section \ref{secResCoabKGs} that the kernel of $\phi$ in Proposition \ref{propUnivFinPresHol} being finitely generated is a sufficient condition for $\phi$ to be induced by a holomorphic map, and have the property that the projections of $G$ to surface group factors have finitely generated kernel.
\end{remark}

Note that the consequences of Proposition \ref{propUnivFinPresHol} in particular apply to the universal homomorphism to a product of orbisurfaces.
\begin{corollary}
Let $X$ be a compact K\"ahler manifold and let $f=(f_1,\cdots,f_r): X \rightarrow S_{g_1,\mm_1} \times \cdots \times S_{g_r,\mm_r}$  be a holomorphic realisation of the universal homomorphism $\phi: G\rightarrow \pi_1^{orb} (S_{g_1,\mm_1}) \times \cdots \times \pi_1^{orb} (S_{g_r,\mm_r})$ defined in Theorem \ref{thmDelCorSim}.

  Then the holomorphic projection $f_{ij}=(f_i,f_j): X \rightarrow S_{g_i,\mm_i} \times S_{g_j,\mm_j}$ is surjective for $1\leq i < j \leq r$.
\end{corollary}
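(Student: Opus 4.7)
The plan is simply to verify that the universal homomorphism $\phi$ from Theorem \ref{thmDelCorSim} satisfies every hypothesis of Proposition \ref{propUnivFinPresHol}, and then read off the surjectivity of the holomorphic projections $f_{ij}$ as (part of) the conclusion of that proposition. There are three hypotheses of Proposition \ref{propUnivFinPresHol} to check for $\phi$: that $G = \pi_1 X$ is the fundamental group of a compact K\"ahler manifold mapping to a direct product of orbisurface groups, that $\phi(G)$ is a finitely presented full subdirect product, and that each composition $p_i\circ \phi : G \to \pi_1^{orb} S_{g_i,\mm_i}$ has finitely generated kernel.

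The first of these is built into the formulation of Theorem \ref{thmDelCorSim}. The finitely-generated-kernel condition on each $p_i\circ \phi$ is precisely assertion (1) of Theorem \ref{thmDelCorSim}. The condition that $\phi(G)$ is a finitely presented full subdirect product is exactly the content of Lemma \ref{lemUnivFinPres}: fullness and subdirectness are assertion (2) of Theorem \ref{thmDelCorSim}, while finite presentability is the nontrivial output of Lemma \ref{lemUnivFinPres}, proved via the fact that a finitely generated normal subgroup of a hyperbolic orbisurface group is either trivial or of finite index (which forces the VSP property on $\phi(G)$ and hence finite presentability by \cite[Theorem A]{BriHowMilSho-13}).

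With all three hypotheses verified, I apply Proposition \ref{propUnivFinPresHol} directly to $\phi$. The proposition asserts in particular that, for the holomorphic realisation $f = (f_1,\dots,f_r) : X \to S_{g_1,\mm_1}\times \cdots \times S_{g_r,\mm_r}$ of $\phi$ (whose existence is already part of the corollary's data), every pairwise holomorphic projection $f_{ij} = (f_i,f_j) : X \to S_{g_i,\mm_i} \times S_{g_j,\mm_j}$ is surjective for $1 \le i < j \le r$. This is exactly the conclusion of the corollary.

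There is essentially no obstacle at this level: the substantive work is already contained in Lemma \ref{lemUnivFinPres} (the VSP argument producing finite presentability) and in Proposition \ref{propUnivFinPresHol} (the Stein-factorisation argument that rules out a one-dimensional image for any pairwise projection by contradicting subdirectness), so the corollary is only the act of plugging the universal homomorphism into the general statement.
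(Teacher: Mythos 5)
Your proposal is correct and matches the paper's proof, which likewise deduces the corollary by applying Proposition \ref{propUnivFinPresHol} to the universal homomorphism, with the hypotheses supplied by Theorem \ref{thmDelCorSim} and Lemma \ref{lemUnivFinPres}. Your explicit verification of the three hypotheses is just a spelled-out version of the paper's one-line argument.
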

\begin{proof}
This is an immediate consequence of applying Proposition \ref{propUnivFinPresHol} to Lemma \ref{lemUnivFinPres} and its proof.
\end{proof}

It is natural to ask if there is a generalisation of Proposition \ref{propUnivFinPresHol} to give surjective holomorphic maps onto products of $s$ factors. The examples constructed in Theorem \ref{thmExsGenClass} show that this is certainly false for general $r-1 \geq s\geq 3$ -- for instance consider Theorem \ref{thmExsGenClass} with $r=4$, $k=2$ and any choice of branched coverings satisfying all necessary conditions. More generally, we also note that all of the groups constructed in Theorem \ref{thmExsGenClass} are projective. Thus, by the Lefschetz Hyperplane Theorem, they can be realised as fundamental groups of compact projective surfaces. Hence, we can not even hope for holomorphic surjections onto $k$-tuples under the additional assumption that our groups are of type $\mathcal{F}_m$ and that $k\leq m$. This shows that any result regarding geometric surjection to $k$-tuples would necessarily have to be of a more subtle nature.

\bibliography{References}

\providecommand{\bysame}{\leavevmode\hbox to3em{\hrulefill}\thinspace}
\providecommand{\MR}{\relax\ifhmode\unskip\space\fi MR }
\providecommand{\MRhref}[2]{%
  \href{http://www.ams.org/mathscinet-getitem?mr=#1}{#2}
}
\providecommand{\href}[2]{#2}
\begin{thebibliography}{10}

\bibitem{ABCKT-95}
J.~Amor\'os, M.~Burger, K.~Corlette, D.~Kotschick, and D.~Toledo,
  \emph{Fundamental groups of compact {K}\"ahler manifolds}, Mathematical
  Surveys and Monographs, vol.~44, American Mathematical Society, Providence,
  RI, 1996.

\bibitem{Ara-11}
D.~Arapura, \emph{Homomorphisms between {K}\"ahler groups}, Topology of
  algebraic varieties and singularities, Contemp. Math., vol. 538, Amer. Math.
  Soc., Providence, RI, 2011, pp.~95--111.

\bibitem{Bea-91}
A.~Beauville, \emph{{Appendix to: ''Moduli and classification of irregular
  Kaehler manifolds (and algebraic varieties) with Albanese general type
  fibrations''}}, Inventiones Math. \textbf{104} (1991), 263--289.

\bibitem{Bie-81}
R.~Bieri, \emph{Homological dimension of discrete groups}, second ed., Queen
  Mary College Mathematical Notes, Queen Mary College, Department of Pure
  Mathematics, London, 1981.

\bibitem{BirLan-99}
C.~Birkenhake and H.~Lange, \emph{{Complex tori}}, {Progress in Mathematics},
  vol. 177, {Birkh\"{a}user Boston, Inc., Boston, MA}, 1999.

\bibitem{BisMjPan-14}
I.~Biswas, M.~Mj, and D.~Pancholi, \emph{{Homotopical Height}}, Int. J. Math.
  Vol. \textbf{25} (2014), no.~13.

\bibitem{BisMjSes-12}
I.~Biswas, M.~Mj, and H.~Seshardi, \emph{{3-manifold groups, K\"ahler groups
  and complex surfaces}}, Commun. Cont. Math. \textbf{14} (2012), no.~6,
  1250038, 24 pp.

\bibitem{BriHowMilSho-02}
M.R. Bridson, J.~Howie, C.F. Miller~III, and H.~Short, \emph{{The subgroups of
  direct products of surface groups}}, Geometriae Dedicata \textbf{92} (2002),
  95--103.

\bibitem{BriHowMilSho-09}
M.R. Bridson, J.~Howie, C.F. Miller~III, and H.~Short, \emph{{Subgroups of direct products of limit groups}}, Ann. of Math.
  \textbf{170} (2009), no.~2, 1447--1467.

\bibitem{BriHowMilSho-13}
M.R. Bridson, J.~Howie, C.F. Miller~III, and H.~Short, \emph{{On the finite presentation of subdirect products and the nature
  of residually free groups}}, American Journal of Math. \textbf{135} (2013),
  no.~4, 891--933.

\bibitem{BriLlo-16}
M.R. Bridson and C.~Llosa~Isenrich, \emph{{Kodaira fibrations, K\"ahler groups,
  and finiteness properties}}, Trans. Amer. Math. Soc. \textbf{372} (2019),
  5869--5890.

\bibitem{BriMil-09}
M.R. Bridson and C.F. Miller~III, \emph{{Structure and finiteness properties of
  subdirect products of groups}}, Proc. Lond. Math. Soc. \textbf{98} (2009),
  no.~3, 631--651.

\bibitem{Bur-10}
M.~Burger, \emph{Fundamental groups of {K}\"ahler manifolds and geometric group
  theory}, Ast\'erisque (2011), no.~339, Exp. No. 1022, ix, 305--321,
  S\'eminaire Bourbaki. Vol. 2009/2010. Expos\'es 1012--1026.

\bibitem{Cat-03}
F.~Catanese, \emph{Fibred {K}\"ahler and quasi-projective groups}, Adv. Geom.
  (2003), no.~suppl., S13--S27, Special issue dedicated to Adriano Barlotti.

\bibitem{Cat-08}
{Catanese, F.}, \emph{{Differentiable and deformation type of algebraic
  surfaces, real and symplectic structures}}, {Symplectic 4-manifolds and
  algebraic surfaces}, Springer, 2008, pp.~{55--167}.

\bibitem{CorSim-08}
K.~Corlette and C.~Simpson, \emph{On the classification of rank-two
  representations of quasiprojective fundamental groups}, Compos. Math.
  \textbf{144} (2008), no.~5, 1271--1331.

\bibitem{Del-08}
T.~Delzant, \emph{Trees, valuations and the {G}reen-{L}azarsfeld set}, Geom.
  Funct. Anal. \textbf{18} (2008), no.~4, 1236--1250.

\bibitem{Del-16}
T.~Delzant, \emph{{K{\"a}hler groups, $\mathbb{R}$-trees, and holomorphic families
  of Riemann surfaces}}, Geometric and Functional Analysis \textbf{26} (2016),
  no.~1, 160--187.

\bibitem{DelGro-05}
T.~Delzant and M.~Gromov, \emph{Cuts in {K}\"ahler groups}, Infinite groups:
  geometric, combinatorial and dynamical aspects, Progr. Math., vol. 248,
  Birkh\"auser, Basel, 2005, pp.~31--55.

\bibitem{DelPy-16}
T.~Delzant and P.~Py, \emph{{Cubulable K\"ahler groups}}, Geometry \& Topology
  \textbf{23} (2019), 2125--2164.

\bibitem{DimPapSuc-09-II}
A.~Dimca, S~Papadima, and A.I. Suciu, \emph{Non-finiteness properties of
  fundamental groups of smooth projective varieties}, J. Reine Angew. Math.
  \textbf{629} (2009), 89--105.

\bibitem{DimSuc-09}
A.~Dimca and A.I. Suciu, \emph{{Which 3-manifold groups are K\"ahler groups?}},
  J. Eur. Math. Soc. (JEMS) \textbf{11} (2009), no.~3, 521--528.

\bibitem{Geo-08}
R.~Geoghegan, \emph{Topological methods in group theory}, Graduate Texts in
  Mathematics, vol. 243, Springer, New York, 2008.

\bibitem{GorMac-88}
M.~Goresky and R.~MacPherson, \emph{Stratified {M}orse theory}, Ergebnisse der
  Mathematik und ihrer Grenzgebiete (3) [Results in Mathematics and Related
  Areas (3)], vol.~14, Springer-Verlag, Berlin, 1988.

\bibitem{Gro-89}
M.~Gromov, \emph{Sur le groupe fondamental d'une vari\'et\'e k\"ahl\'erienne},
  C. R. Acad. Sci. Paris S\'er. I Math. \textbf{308} (1989), no.~3, 67--70.

\bibitem{JohRee-87}
F.E.A. Johnson and E.G. Rees, \emph{{On the fundamental group of a complex
  algebraic manifold}}, Bull. London Math. Soc. \textbf{19} (1987), no.~5,
  463--466.

\bibitem{Koc-10}
D.H. Kochloukova, \emph{On subdirect products of type {${\rm FP}_m$} of limit
  groups}, J. Group Theory \textbf{13} (2010), no.~1, 1--19.

\bibitem{Kot-12}
D.~Kotschick, \emph{The deficiencies of {K}\"ahler groups}, J. Topol.
  \textbf{5} (2012), no.~3, 639--650.

\bibitem{Kot-12-II}
D.~Kotschick, \emph{Three-manifolds and {K}\"ahler groups}, Ann. Inst. Fourier
  (Grenoble) \textbf{62} (2012), no.~3, 1081--1090.

\bibitem{Kuc-14}
B.~Kuckuck, \emph{Subdirect products of groups and the
  {$n$}-{$(n+1)$}-{$(n+2)$} conjecture}, Q. J. Math. \textbf{65} (2014), no.~4,
  1293--1318.

\bibitem{Llo-16-II}
C.~Llosa~Isenrich, \emph{{Branched covers of elliptic curves and K\"ahler
  groups with exotic finiteness properties}}, Annales de l'Institut Fourier
  \textbf{69} (2019), no.~1, 335--363.

\bibitem{Loo-84}
E.J.N. Looijenga, \emph{Isolated singular points on complete intersections},
  London Mathematical Society Lecture Note Series, vol.~77, Cambridge
  University Press, Cambridge, 1984.

\bibitem{OorZar-96}
{Oort, F. and Zarhin, Y.G.}, \emph{{Complex tori}}, {Indag. Math. (N.S.)}
  \textbf{7} (1996), no.~4, 473--487.

\bibitem{Py-13}
P.~Py, \emph{Coxeter groups and {K}\"ahler groups}, Math. Proc. Cambridge
  Philos. Soc. \textbf{155} (2013), no.~3, 557--566.

\bibitem{Ser-58}
J.-P. Serre, \emph{Sur la topologie des vari\'et\'es alg\'ebriques en
  caract\'eristique {$p$}}, Symposium internacional de topolog\'\i a algebraica
  {I}nternational symposium on algebraic topology, Universidad Nacional
  Aut\'onoma de M\'exico and UNESCO, Mexico City, 1958, pp.~24--53.

\bibitem{Siu-87}
Y.T. Siu, \emph{Strong rigidity for {K}\"ahler manifolds and the construction
  of bounded holomorphic functions}, Discrete groups in geometry and analysis
  ({N}ew {H}aven, {C}onn., 1984), Progr. Math., vol.~67, Birkh\"auser Boston,
  Boston, MA, 1987, pp.~124--151.

\end{thebibliography}
\bibliographystyle{amsplain}

\end{document}